\begin{document}
\newtheorem{theorem}{Theorem}
\newtheorem{lemma}{Lemma}
\newtheorem{proposition}{Proposition}
\newtheorem{observation}{Observation}
\newtheorem{remark}{Remark}
\newtheorem{example}{Example}
\newtheorem{definition}{Definition}
\newtheorem{corollary}{Corollary}
\newtheorem{assumption}{Assumption}
\newtheorem{condition}{Condition}
\newtheorem{claim}{Claim}

\renewcommand{\AA}[1]{A^{(#1)}}
\newcommand{\HH}[1]{H^{(#1)}}
\newcommand{\nub}[1]{(\nu_{#1} + b_{#1}^2)}

\newcommand{\ssigma}[1]{\rho^{(#1)}}
\newcommand{\mxbernsymbol}{\eta}

\renewcommand{\Pr}{\mathbb{P}}
\newcommand{\E}{\mathbb{E}}
\newcommand{\N}{\mathbb{N}}
\newcommand{\R}{\mathbb{R}}
\newcommand{\Z}{\mathbb{Z}}

\newcommand{\bbB}{\mathbb{B}}
\newcommand{\bbD}{\mathbb{D}}
\newcommand{\bbG}{\mathbb{G}}
\newcommand{\bbJ}{\mathbb{J}}
\newcommand{\bbK}{\mathbb{K}}
\newcommand{\bbL}{\mathbb{L}}
\newcommand{\bbO}{\mathbb{O}}
\newcommand{\bbP}{\mathbb{P}}
\newcommand{\bbR}{\mathbb{R}}
\newcommand{\bbT}{\mathbb{T}}
\newcommand{\bbU}{\mathbb{U}}

\newcommand{\Rnonneg}{\R_{\ge 0}}

\newcommand{\calA}{\mathcal{A}}
\newcommand{\calB}{\mathcal{B}}
\newcommand{\calC}{\mathcal{C}}
\newcommand{\calD}{\mathcal{D}}
\newcommand{\calE}{\mathcal{E}}
\newcommand{\calH}{\mathcal{H}}
\newcommand{\calJ}{\mathcal{J}}
\newcommand{\calL}{\mathcal{L}}
\newcommand{\calN}{\mathcal{N}}
\newcommand{\calP}{\mathcal{P}}
\newcommand{\calQ}{\mathcal{Q}}
\newcommand{\calR}{\mathcal{R}}
\newcommand{\calS}{\mathcal{S}}
\newcommand{\calT}{\mathcal{T}}
\newcommand{\calX}{\mathcal{X}}
\newcommand{\calM}{\mathcal{M}}
\newcommand{\calI}{\mathcal{I}}
\newcommand{\calV}{\mathcal{V}}

\newcommand{\scrC}{\mathscr{C}}

\newcommand{\Ahat}{\hat{A}}
\newcommand{\Ihat}{{\hat{I}}}
\newcommand{\Jhat}{{\hat{J}}}
\newcommand{\Ohat}{\hat{O}}
\newcommand{\Phat}{\hat{P}}
\newcommand{\Rhat}{\hat{R}}
\newcommand{\Xhat}{\hat{X}}
\newcommand{\dhat}{\hat{d}}
\newcommand{\uhat}{\widehat{u}}
\newcommand{\vhat}{\widehat{v}}
\newcommand{\what}{\widehat{w}}
\newcommand{\rhohat}{\hat{\rho}}
\newcommand{\sigmahat}{\hat{\sigma}}
\newcommand{\tauhat}{\hat{\tau}}
\newcommand{\nuhat}{\hat{\nu}}
\newcommand{\thetahat}{\widehat{\theta}}
\newcommand{\chat}{\hat{c}}

\newcommand{\ccheck}{\check{c}}
\newcommand{\Xcheck}{\check{X}}
\newcommand{\Aml}{\mathring{A}}
\newcommand{\Vml}{\mathring{V}}
\newcommand{\Xml}{\mathring{X}}
\newcommand{\wml}{\mathring{w}}

\newcommand{\Abar}{\bar{A}}
\newcommand{\Pbar}{\bar{P}}
\newcommand{\Xbar}{\bar{X}}
\newcommand{\cbar}{\bar{c}}
\newcommand{\nubar}{\bar{\nu}}
\newcommand{\bbar}{\bar{b}}
\newcommand{\vbar}{\bar{v}}

\newcommand{\Atilde}{\tilde{A}}
\newcommand{\UAtilde}{U_{\Atilde}}
\newcommand{\SAtilde}{S_{\Atilde}}
\newcommand{\Otilde}{\tilde{O}}
\newcommand{\Ptilde}{\tilde{P}}
\newcommand{\Xtilde}{\tilde{X}}
\newcommand{\dtilde}{\tilde{d}}
\newcommand{\wtilde}{\tilde{w}}
\newcommand{\rhotilde}{\tilde{\rho}}
\newcommand{\calDtilde}{\tilde{\calD}}
\newcommand{\calXtilde}{\tilde{\calX}}

\newcommand{\dH}{d_{\text{H}}}
\newcommand{\dHtilde}{\dtilde_{\text{H}}}

\newcommand{\bB}{\mathbf{B}}
\newcommand{\bZ}{\mathbf{Z}}

\newcommand{\Xstar}{X^*}
\newcommand{\bstar}{b^*}
\newcommand{\lambdastar}{\lambda^\star}
\newcommand{\nustar}{\nu^*}
\newcommand{\nmin}{n_{\min}}

\newcommand{\wopt}{\wml}

\newcommand{\SA}{S_A}
\newcommand{\SM}{S_M}
\newcommand{\SP}{S_P}
\newcommand{\UP}{U_P}
\newcommand{\UA}{U_A}
\newcommand{\UM}{U_M}

\newcommand{\RDPG}{\operatorname{RDPG}}
\newcommand{\GRDPG}{\operatorname{GRDPG}}
\newcommand{\ASE}{\operatorname{ASE}}
\newcommand{\KL}{\operatorname{KL}}
\newcommand{\SNR}{\operatorname{SNR}}
\newcommand{\VAR}{\operatorname{Var}}
\newcommand{\COV}{\operatorname{Cov}}
\newcommand{\wtd}{\operatorname{wtd}}
\newcommand{\unif}{\operatorname{unif}}
\newcommand{\Bernoulli}{\operatorname{Bern}}
\newcommand{\supp}{\operatorname{supp}}
\newcommand{\rank}{\operatorname{rank}}
\newcommand{\diag}{\operatorname{diag}}
\newcommand{\tr}{\operatorname{tr}}
\newcommand{\vech}{\operatorname{vech}}
\newcommand{\opnorm}{\operatorname{op}}
\newcommand{\frobnorm}{\mathrm{F}}

\newcommand{\dtildetti}{\dtilde_{\tti}}

\newcommand{\Phatwtd}{\Ptilde}
\newcommand{\Phatunif}{\Pbar}

\newcommand{\onevec}{\vec{\mathbf{1}}}
\newcommand{\ivec}{\vec{i}}
\newcommand{\jvec}{\vec{j}}
\newcommand{\indicator}{\mathbb{I}}
\newcommand{\indic}{\indicator}

\newcommand{\marginal}[1]{\marginpar{\raggedright\scriptsize #1}}
\newcommand{\as}{\text{~~~a.s.}}
\newcommand{\whp}{\text{~~~w.h.p.}}
\newcommand{\inlaw}{\xrightarrow{\calL}}
\newcommand{\inprob}{\xrightarrow{P}}
\newcommand{\iid}{\stackrel{\text{i.i.d.}}{\sim}}
\newcommand{\eqdist}{\stackrel{d}{=}}
\newcommand{\Perm}{\Pi}


\newcommand{\vone}{\boldsymbol{1}}
\newcommand{\mA}{\boldsymbol{A}}
\newcommand{\mB}{\boldsymbol{B}}
\newcommand{\mC}{\boldsymbol{C}}
\newcommand{\mD}{\boldsymbol{D}}
\newcommand{\mE}{\boldsymbol{E}}
\newcommand{\mF}{\boldsymbol{F}}
\newcommand{\mG}{\boldsymbol{G}}
\newcommand{\mH}{\boldsymbol{H}}
\newcommand{\mI}{\boldsymbol{I}}
\newcommand{\mZero}{\boldsymbol{0}}
\newcommand{\mJ}{\boldsymbol{J}}
\newcommand{\mK}{\boldsymbol{K}}
\newcommand{\mM}{\boldsymbol{M}}
\newcommand{\mO}{\boldsymbol{O}}
\newcommand{\mP}{\boldsymbol{P}}
\newcommand{\mQ}{\boldsymbol{Q}}
\newcommand{\mR}{\boldsymbol{R}}
\newcommand{\mS}{\boldsymbol{S}}
\newcommand{\mU}{\boldsymbol{U}}
\newcommand{\mV}{\boldsymbol{V}}
\newcommand{\mVhat}{\widehat{\mV}}
\newcommand{\mW}{\boldsymbol{W}}
\newcommand{\mX}{\boldsymbol{X}}
\newcommand{\mY}{\boldsymbol{Y}}
\newcommand{\mZ}{\boldsymbol{Z}}
\newcommand{\mLambda}{\boldsymbol{\Lambda}}
\newcommand{\mLambdastar}{{\mLambda^\star}}
\newcommand{\mTheta}{\boldsymbol{\Theta}}
\newcommand{\mGamma}{\boldsymbol{\Gamma}}
\newcommand{\mPi}{\boldsymbol{\Pi}}
\newcommand{\mXhat}{\hat{\boldsymbol{X}}}
\newcommand{\mBstar}{\mB^\star}
\newcommand{\Bstar}{B^\star}
\newcommand{\mMstar}{\mM^\star}
\newcommand{\Mstar}{M^\star}
\newcommand{\mYt}{\widetilde{\mY}}
\newcommand{\Yt}{\widetilde{Y}}
\newcommand{\Vt}{\widetilde{V}}
\newcommand{\Ut}{\widetilde{U}}
\newcommand{\mWt}{\widetilde{\mW}}
\newcommand{\Wt}{\widetilde{W}}

\newcommand{\vx}{\boldsymbol{x}}
\newcommand{\vs}{\boldsymbol{s}}
\newcommand{\sstar}{s^\star}
\newcommand{\vd}{\boldsymbol{d}}
\newcommand{\vy}{\boldsymbol{y}}
\newcommand{\ve}{\boldsymbol{e}}
\newcommand{\vu}{\boldsymbol{u}}
\newcommand{\vustar}{\boldsymbol{u}^\star}
\newcommand{\vutilde}{\widetilde{\vu}}
\newcommand{\ustar}{u^\star}
\newcommand{\vuhat}{\boldsymbol{\widehat{u}}}
\newcommand{\vv}{\boldsymbol{v}}
\newcommand{\vo}{\boldsymbol{0}}
\newcommand{\vvhat}{\boldsymbol{\widehat{v}}}
\newcommand{\vw}{\boldsymbol{w}}
\newcommand{\vbt}{\widetilde{\vb}}
\newcommand{\lambdat}{\widetilde{\lambda}}
\newcommand{\lambdahat}{\widehat{\lambda}}
\newcommand{\va}{\boldsymbol{a}}
\newcommand{\vz}{\boldsymbol{z}}
\newcommand{\vm}{\boldsymbol{m}}
\newcommand{\valpha}{\boldsymbol{\alpha}}
\newcommand{\alphahat}{\widehat{\alpha}}
\newcommand{\vtheta}{\boldsymbol{\theta}}
\newcommand{\vsigma}{\boldsymbol{\sigma}}
\newcommand{\vgamma}{\boldsymbol{\gamma}}
\newcommand{\vbeta}{\boldsymbol{\beta}}
\newcommand{\veta}{\boldsymbol{\eta}}
\newcommand{\vlambda}{\boldsymbol{\lambda}}
\newcommand{\mUt}{\widetilde{\mU}}
\newcommand{\mVt}{\widetilde{\mV}}
\newcommand{\mSig}{\boldsymbol{\Sigma}}
\newcommand{\mDelta}{\boldsymbol{\Delta}}
\newcommand{\tmSig}{\tilde{\boldsymbol{\Sigma}}}
\newcommand{\tmLambda}{\tilde{\boldsymbol{\Lambda}}}
\newcommand{\tildV}{\tilde{V}}
\newcommand{\Opq}{\mathcal{O}_{p,q}}
\newcommand{\Od}{\mathbb{O}_d}
\newcommand{\bbS}{\mathbb{S}}
\newcommand{\tB}{\mathcal{B}}
\newcommand{\bM}{\mathbb{M}}
\newcommand{\bU}{\mathbb{U}}
\newcommand{\bV}{\mathbb{V}}
\newcommand{\bbV}{\bV}
\newcommand{\vb}{\boldsymbol{b}}
\newcommand{\vh}{\boldsymbol{h}}
\newcommand{\vg}{\boldsymbol{g}}
\newcommand{\vxi}{\boldsymbol{\xi}}
\newcommand{\vzeta}{\boldsymbol{\zeta}}
\newcommand{\vdelta}{\boldsymbol{\delta}}
\newcommand{\vnu}{\boldsymbol{\nu}}
\newcommand{\vmu}{\boldsymbol{\mu}}
\newcommand{\I}{{I_\star}}
\newcommand{\mXi}{\boldsymbol{\Xi}}
\newcommand{\mUstar}{{\mU^\star}}
\newcommand{\Ustar}{{U^\star}}
\newcommand{\mUhat}{\widehat{\mU}}
\newcommand{\mPhi}{\boldsymbol{\Phi}}

\newcommand{\gsup}{S_{\mathcal{G}}}
\newcommand{\cG}{\mathcal{G}}
\newcommand{\zvec}{\mathbf{0}}
\newcommand{\veps}{\boldsymbol{\varepsilon}}
\newcommand{\vepsilon}{\boldsymbol{\epsilon}}
\newcommand{\vDelta}{\boldsymbol{\Delta}}
\newcommand{\sgn}[1]{\operatorname{sgn}\left(#1\right)}
\newcommand{\tti}[1]{\|#1\|_{2,\infty}}
\newcommand{\bora}{\hat{\vbeta}^{o}}
\newcommand{\thora}{\hat{\vtheta}^{o}}
\newcommand{\vora}{\hat{\vv}^{o}}
\newcommand{\gevent}{\mathbb{G}(\lambda_n)}
\newcommand{\bbM}{\mathbb{M}}
\newcommand{\bbC}{\mathbb{C}}
\newcommand{\conev}{\bbC_{\vv^*}(\bbM, \bbM^{\perp})}

\newcommand{\Shat}{\widehat{S}}
\newcommand{\calG}{\mathcal{G}}

\newcommand{\wDelta}{\widetilde{\mDelta}}
\newcommand{\wGamma}{\widetilde{\mGamma}}
\newcommand{\wW}{\widetilde{\mW}}

\newcommand{\inner}[1]{\langle #1 \rangle}
\newcommand{\sI}{\mathcal{I}}

\newcommand{\mWl}{\mW^{(\ell)}}
\newcommand{\mMl}{\mM^{(\ell)}}
\newcommand{\mHl}{\mH^{(\ell)}}
\newcommand{\mUl}{\mU^{(\ell)}}
\newcommand{\mEl}{\mE^{(\ell)}}
\newcommand{\lDelta}{\mDelta^{(\ell)}}
\newcommand{\lLambda}{\mLambda^{(\ell)}}
\newcommand{\llambda}{\lambda^{(\ell)}}
\newcommand{\mZhat}{\widehat{\mZ}}
\newcommand{\mZtilde}{\widetilde{\mZ}}
\newcommand{\mZstar}{\mZ^\star}
\newcommand{\mPsi}{\boldsymbol{\Psi}}

\newcommand{\rmF}{\mathrm{F}}

\newcommand{\sighat}{\widehat{\sigma}}

\newcommand{\mMhat}{\widehat{\mM}}
\newcommand{\Mhat}{\widehat{M}}

\newcommand{\dF}{d_{\frobnorm}}
\newcommand{\dtti}{d_{2,\infty}}
\newcommand{\dop}{d_{\mathrm{op}}}

\newcommand{\Hess}{\operatorname{Hess}}
\newcommand{\grad}{\operatorname{grad}}
\newcommand{\baf}{\bar{f}}

\newcommand{\vertiii}[1]{{\left\vert\kern-0.25ex\left\vert\kern-0.25ex\left\vert #1 
    \right\vert\kern-0.25ex\right\vert\kern-0.25ex\right\vert}}

\newcommand{\Iscr}{\mathscr{I}}

\newcommand{\calU}{\mathcal{U}}
\newcommand{\mUperp}{\mU_{\perp}}
\newcommand{\mHperp}{\mH_{\perp}}
\newcommand{\mHt}{\widetilde{\mH}}
\newcommand{\thetastar}{\theta^\star}
\newcommand{\vustart}{\vu^{\star \top}}
\newcommand{\PO}[1]{\mathcal{P}_{\Omega}\left(#1\right)}
\newcommand{\POc}[1]{\mathcal{P}_{\Omega^c}\left(#1\right)}
\newcommand{\tvustar}{\widetilde{\vustar}}
\newcommand{\tvustart}{\widetilde{\vustar}^\top}
\newcommand{\Poff}[1]{\mathcal{P}_{\text{off-diag}}\left(#1\right)}
\newcommand{\C}{\mathbb{C}}
\newcommand{\mWp}{\mW'}
\newcommand{\vwp}{\vw'}
\newcommand{\Wp}{W'}
\newcommand{\mWpp}{\mW''}
\newcommand{\Wpp}{W''}
\newcommand{\IE}{\mathbb{I E}}

\newcommand{\vc}{\boldsymbol{c}}
\newcommand{\vect}[1]{\operatorname{vec}\left(#1\right)}
\newcommand{\tX}{\widetilde{X}}
\newcommand{\Gnew}{G^{\text{new}}}
\newcommand{\Gnewb}{\bar{G}^{\text{new}}}
\newcommand{\Gnewt}{\widetilde{G}^{\text{new}}}
\newcommand{\vbstar}{\vb^\star}
\newcommand{\vbstart}{\vb^{\star \top}}
\newcommand{\vbhat}{\widehat{\vb}}
\newcommand{\tilO}{\widetilde{O}}
\newcommand{\jr}[1]{j^{(r)}_{#1}}
\newcommand{\vjj}[1]{\boldsymbol{j}^{(#1)}}
\newcommand{\vj}{\boldsymbol{j}}

\newcommand{\rulesep}{\unskip\ \vrule\ }

\newcommand{\rbar}{\bar{r}}
\newcommand{\lbar}{\bar{l}}
\newcommand{\pibar}{\bar{\pi}}
\newcommand{\Gb}{G_{\mathrm{b}}}

\newcommand{\vxb}{\bar{\vx}}
\newcommand{\xb}{\bar{x}}
\newcommand{\vyb}{\bar{\vy}}
\newcommand{\yb}{\bar{y}}

\newcommand{\mUstart}{\mU^{\star \top}}
\newcommand{\calK}{\mathcal{K}}
\newcommand{\Deltastar}{\Delta^\star}
\twocolumn[

\aistatstitle{Improved dependence on coherence in eigenvector and eigenvalue estimation error bounds}

\aistatsauthor{ Hao Yan \And Keith Levin}

\aistatsaddress{ Department of Statistics\\
  University of Wisconsin--Madison\\
  Madison, WI 53706} ]

\begin{abstract}
Spectral estimators are fundamental in low-rank matrix models and arise throughout machine learning and statistics, with applications including network analysis, matrix completion and PCA.
These estimators aim to recover the leading eigenvalues and eigenvectors of an unknown signal matrix observed subject to noise.
While extensive research has addressed the statistical accuracy of spectral estimators under a variety of conditions, most previous work has assumed that the signal eigenvectors are incoherent with respect to the standard basis.
This assumption typically arises because of suboptimal dependence on coherence in one or more concentration inequalities.
Using a new matrix concentration result that may be of independent interest, we establish estimation error bounds for eigenvector and eigenvalue recovery whose dependence on coherence significantly improves upon prior work.
Our results imply that coherence-free bounds can be achieved when the standard deviation of the noise is comparable to its Orlicz 1-norm (i.e., its subexponential norm).
This matches known minimax lower bounds under Gaussian noise up to logarithmic factors.
\end{abstract}

\section{INTRODUCTION}

Spectral methods are widely employed in modern data science and engineering \cite{chen2021spectral}. 
A common framework where these methods excel involves recovering a low-rank signal matrix $\mMstar \in \R^{n\times n}$ from a noisy observation
\begin{equation}\label{eq:model}
    \mM = \mMstar + \mH, 
\end{equation}
where $\mH \in \R^{n\times n}$ represents additive noise, typically assumed independent of $\mMstar$.
The key principle behind spectral methods is that the eigenspace and eigenvalues (or the singular subspace and singular values) of the observed matrix $\mM$ capture essential structural information about the signal matrix $\mMstar$.
By accurately estimating the structure of $\mMstar$, valuable insights about the underlying data can be uncovered.
Indeed, Equation~\eqref{eq:model} encompasses a range of tasks, including low-rank matrix denoising \citep{donoho2014minimax, bao2021singular}, factor analysis \citep{cai2013sparse, fan2021robust, zhang2022heteroskedastic}, 
community detection \citep{rohe2011spectral, sussman2013consistent, emmanuel2018community}
and matrix completion \citep{sourav2015matrix, athey2021matrix}. 
The widespread use of spectral methods has driven extensive research into how eigenspaces and eigenvalues (resp.~singular spaces and singular values) change under matrix perturbation \citep[see, for example,][]{bao2022statistical, fan2022asymptotic, xia2021normal, yu2014useful, cai2018rate-optimal, cape2019two-to-infinity, o2023matrices}. 
For a comprehensive review of spectral methods, see \cite{chen2021spectral}. 

Roughly speaking, the coherence of a vector with respect to the standard basis characterizes how its magnitude is distributed across their components (see Equation~\eqref{eq:mu-define} for a formal definition).
In perturbation analysis of eigenvectors and eigenvalues, as well as in tasks such as matrix completion, the coherence of the signal eigenvectors plays a key role \citep{bhardwaj2024matrix, chen2021asymmetry, cheng2021tackling}. 
Indeed, it has been conjectured that certain perturbation bounds, such as entrywise eigenvector bounds, depend inherently on the coherence \citep{bhardwaj2024matrix}.
Recent work by \cite{YanLev2024} disproves this conjecture under the model in Equation~\eqref{eq:model} when $\mMstar$ and $\mH$ are symmetric, the signal matrix $\mMstar$ has suitably large eigenvalues and the entries of the noise matrix $\mH$ are homoscedastic.
The present paper aims to improve our understanding of the effect of coherence by deriving non-asymptotic eigenvalue and eigenvector perturbation bounds under Equation~\eqref{eq:model} in the asymmetric case, with improved dependence on coherence, while relaxing the homoscedasticity assumptions of previous work.

\subsection{Background and notation}

Throughout this paper, we assume that $\mMstar$ is a rank-$r$ symmetric matrix with spectral decomposition $\mMstar = \mUstar \mLambdastar \mUstart$, where $\mUstar \in \R^{n\times r}$ has orthonormal columns, and $\mLambdastar = \diag\left(\lambdastar_1, \dots, \lambdastar_r\right)$ is a diagonal matrix. 
Without loss of generality, we assume that $\lambdastar_{\max} = |\lambdastar_1| \geq \cdots \geq |\lambdastar_r| = \lambdastar_{\min}$. 
Additionally, we let the top $r$ eigenvalues of $\mM$ be $\lambda_1, \dots, \lambda_r$, indexed so that $|\lambda_1| \geq \cdots \geq |\lambda_r|$, and their corresponding normalized eigenvectors be $\vu_1, \dots, \vu_r$. 
When $r = 1$, we let $\mMstar = \lambdastar \vustar \vustart$ and denote the leading eigenvalue and eigenvector of $\mM$ as $\lambda$ and $\vu$, respectively.

The incoherence parameter of a rank-$r$ symmetric matrix $\mMstar = \mUstar \mLambdastar \mUstart$ is defined as
\begin{equation} \label{eq:mu-define} 
\mu = n\left\|\mUstar\right\|_{\infty}^2 \in [1, n],
\end{equation}
where $\|\cdot\|_{\infty}$ denotes the entrywise $\ell_{\infty}$ norm.
Broadly speaking, $\mu$ quantifies how the mass is distributed within the eigenspace of $\mMstar$. 
A common intuition holds that larger values of $\mu$ make estimating $\mMstar$ more challenging, as $\mUstar$ aligns more closely with the standard basis vectors, rendering $\mMstar$ nearly sparse and thus more susceptible to noise-induced perturbation. 
\begin{remark} \label{rem:coh:def}
    We remind the reader of an alternative definition for the incoherence parameter \citep{candes2012exact},
\begin{equation*}
    \mu_0 = \frac{n}{r} \|\mUstar\|_{2,\infty}^2 \in [1, n/r],
\end{equation*}
where $\|\cdot\|_{2, \infty}$ denotes the maximum row-wise $\ell_2$ norm.
These two definitions agree in the rank-one case, with $\mu_0 \leq \mu \leq \mu_0 r$ in general. 
Since our primary focus is eigenvector recovery rather than eigenspace recovery, we adopt the definition given in Equation~\eqref{eq:mu-define}.
\end{remark}
Many eigenvector and eigenvalue perturbation bounds rely on the incoherence parameter $\mu$. 
For instance, when $\mMstar$ is rank-one, under Assumption~\ref{assump:1}, Corollary 1 in \cite{chen2021asymmetry} shows that
\begin{equation} \label{eq:ev-bound-suboptimal}
    |\lambda - \lambdastar| \lesssim \max \left\{\sigma \sqrt{n \log n}, B \log n\right\} \sqrt{\frac{\mu}{n}},
\end{equation}
where $\sigma^2$ represents an upper bound on the variance of the additive noise $\mH$, while $B$ controls its tail behavior (precise definitions are give in Assumption~\ref{assump:1}). 
In a variety of settings, $\mu$ may be large.
Such settings include stochastic block models with unequal cluster sizes \citep{mukherjee2023recovering}, sparse PCA \citep{agterberg2022entrywise} and matrix completion~\citep{chen2014coherent, yan2024entrywiseinferencemissingpanel}.
When $\mu$ is large, the bound in Equation~\eqref{eq:ev-bound-suboptimal} seems improvable.
Indeed, due to the presence of $\mu$ in the eigenvalue and eigenvector estimation error bounds, many papers adopt the incoherence assumption $\mu = O(1)$ \citep{xie2024higherorderentrywise, agterberg2024estimating, mao2021estimatingmixed, lei2016goodness, rubin2022statistical}, making it unclear whether similar results hold for large $\mu$.

We note in passing that the problem of recovering the eigenvalues and eigenvectors of $\mMstar$ under the model in Equation~\eqref{eq:model} bears some resemblance to the problem setting considered in the noisy power method literature \citep[see, e.g.,][]{HarPri2014}.
That line of work typically assumes that we access only a subsample of the entries of a large (typically Hermitian) matrix, subject to noise.
This is in contrast to the setting considered here, in which we assume that we observe the matrix $\mM$ in its entirety. 

As outlined above, we aim to improve the dependence on $\mu$ for a range of eigenvalue and eigenvector estimation bounds. 
This, in turn, would provide a clearer understanding of how the estimation error depends on the eigenspace structure and the tail behavior of the noise distribution.
In Corollary~\ref{cor:eigenvalue:one} and Equation~\eqref{eq:eigenvalue:one} below, we demonstrate that a coherence-free version of Equation~\eqref{eq:ev-bound-suboptimal} is attainable for a wide range of noise distributions.
This is accomplished via two key tools, described in Section~\ref{sec:main}:
(i) a refined combinatorial argument utilizing graphical tools to obtain non-asymptotic bounds on high-order moments of the noise matrix $\mH$; 
(ii) a novel decomposition which breaks any unit vector $\vx$ into a sum of vectors with similar $\ell_{\infty}$-norms. 

The remainder of the paper is organized as follows. 
In Section~\ref{sec:setup}, we introduce the assumptions and preliminaries for our problem.
In Section~\ref{sec:main}, we establish our new concentration results for both asymmetric and symmetric noise matrices. 
In Section~\ref{sec:applications}, we apply our new results to provide new bounds for various eigenvalue and eigenvector estimation problems. 
Numerical experiments are provided in Section~\ref{sec:numeric}. 
We conclude in Section~\ref{sec:discussions} with a discussion of the limitations of our study and directions for future work. 
Detailed proofs of all results can be found in the supplementary materials.

\paragraph{Notation} 

Before proceeding, we pause to establish notation.
We use $c$ and $C$ to denote positive constants whose precise values may change from line to line.
For a positive integer $n$, we write $[n] = \{1,2,\dots,n\}$.
$|\calA|$ denotes the cardinality of a set $\calA$. 
For a vector $\vv=\left(v_1,v_2, \dots, v_n\right)^\top \in \R^n$, we denote the norms $\|\vv\|_2=\sqrt{\sum_{i=1}^n v_i^2}$ and $\|\vv\|_{\infty}=\max_i\left|v_i\right|$.
We denote the standard basis vectors of $\R^n$ by $\ve_i \in \R^{n}$ for $i \in [n]$ and write $\bbS^{n-1} = \left\{ \vu \in \R^n: \|\vu\|_2 = 1 \right\}$ for the unit sphere.
For a matrix $\mM \in \R^{n\times n}$, 
$\|\mM\|$ denotes its operator norm. 
$\mI_{n} \in \R^{n\times n}$ denotes the $n$-by-$n$ identity matrix.
We use both standard Landau notation and asymptotic notation: for positive functions $f(n)$ and $g(n)$, we write $f(n) \gg g(n)$, $f(n) = \omega(g(n))$ or $g(n) = o( f(n) )$ if $f(n)/g(n) \rightarrow \infty$ as $n \rightarrow \infty$.
We write $f(n) \gtrsim g(n)$, $f(n) = \Omega( g(n) )$ or $g(n) = O( f(n) )$ if for some constant $C > 0$, we have $f(n)/g(n) \ge C$ for all sufficiently large $n$.
We write $f(n) = \Theta( g(n) )$ if both $f(n) = O(g(n))$ and $g(n) = O(f(n))$. 
We write $f(n) = \Otilde\left(g(n)\right)$ if there exists a constant $\alpha$ such that $f(n) = O(g(n)\log^\alpha n)$.  
 \label{sec:intro}

\section{MODEL AND ASSUMPTIONS} \label{sec:setup} 

In this section, we present our model and underlying assumptions. 
Our primary focus is on the scenario where the signal matrix $\mMstar$ is observed with noise represented by an asymmetric matrix, modeled as
    \begin{equation}\label{eq:asymm-model}
        \mM = \mMstar + \mH \in \R^{n\times n}. 
    \end{equation}
    In Section~\ref{sec:symmetric}, we extend our results to
    \begin{equation}\label{eq:symm-model}
        \mM = \mMstar + \mW, 
    \end{equation}
    where the noise matrix $\mW \in \R^{n\times n}$ is symmetric. 
    While the assumption of an asymmetric noise matrix $\mH$ may seem unconventional, it arises naturally in situations where multiple copies of $\mM$ from Equation~\eqref{eq:symm-model} are arranged asymmetrically.  
    For example, analyzing Equation~\eqref{eq:asymm-model} can lead to improved estimation methods for multilayer network analysis.
    This field that has recently attracted a lot of research interest \citep{LevAthTanLyzYouPri2017, draves2020bias, macdonald2022latent, lei2023computational, xie2024biascorrected}, and we intend to explore this direction in future work. 
    Additionally, our analysis offers valuable theoretical insights for general low-rank signal-plus-noise models.  

\subsection{Assumptions}

    We concentrate on the case where $(H_{i j})_{1 \leq i, j \leq n}$ are independently generated, without assuming identical distributions or homoscedasticity. 
    Instead, we consider the conditions in Assumption~\ref{assump:1}. 
\begin{assumption}\label{assump:1}
\hfill
\begin{enumerate}
    \item\label{assump:center}\textbf{(Zero mean)} $\E \left[H_{i j}\right]=0$ for all $1 \leq i, j \leq n$;
    \item\label{assump:var}  \textbf{(Variance)} $\VAR\left(H_{i j}\right)=\E \left[H_{i j}^2\right] = \sigma_{ij}^2 \leq \sigma^2$ for all $1 \leq i, j \leq n$;
    \item\label{assump:mag}  \textbf{(Magnitude)} Each $H_{i j}\; (1 \leq i, j \leq n)$ satisfies at least one of the following conditions:
    \begin{enumerate}[label = (\alph*), ref=\theenumi{} (\alph*)]
        \item \label{item:assump-3a} $\left|H_{i j}\right| \leq B$;
        \item \label{item:assump-3b} $\Pr\left\{\left|H_{i j}\right|>B\right\} \leq c_b n^{-12}$ for some universal constant $c_b>0$. 
    \end{enumerate}
    \item \label{assump:symm} \textbf{(Symmetry)} $H_{ij}$ has a symmetric distribution about $0$ for all $1 \leq i, j \leq n$.
\end{enumerate}
\end{assumption}
Throughout this paper, we assume condition~\ref{item:assump-3a} for technical simplicity. 
All our results continue to hold under condition~\ref{item:assump-3b} by applying a standard truncation argument to the entries of $\mH$. 
We remind the reader that $\sigma = \sigma_n$ and $B = B_n$ are allowed to depend on $n$, but we suppress this dependence to simplify notation. 

\begin{remark} \label{rem:symm-about-0}
    Conditions~\ref{assump:center} through \ref{assump:mag} in Assumption~\ref{assump:1} closely follow Assumption 1 in \cite{chen2021asymmetry}, and are standard assumptions in the literature \citep[see also][]{cheng2021tackling,zhou2023deflatedheteropcaovercomingcurse}.  
    We impose an additional symmetry assumption in Condition~\ref{assump:symm} for technical convenience. 
    This ensures that the moments of $\mH$ and $\mW$ have a simpler combinatorial structure, a fact we use in the proofs of Theorem~\ref{thm:IExHky-moment} and Lemma~\ref{lem:symm-mean}.
    We anticipate that similar results will hold without Condition~\ref{assump:symm}, at the cost of a more complicated combinatorial argument than the one we give here.
    In Section~\ref{sec:numeric}, we provide empirical evidence to support this conjecture, leaving a formal proof for future work.
\end{remark}

It often suffices to consider the setting where
\begin{equation} \label{eq:B-sigma-bound}
    B \ll \sigma \sqrt{\frac{n}{\log^3 n}},
\end{equation}
as the following two examples illustrate.

\begin{example}[Low-rank matrix completion] \label{ex:matrix-completion}
In matrix completion with random sampling, $\mM$ is given by
\begin{equation*}
    M_{i j}= \begin{cases}\frac{1}{p} \Mstar_{i j}, & \text { with probability } p \\ 0, & \text { with probability } 1-p.\end{cases}
\end{equation*}
That is, each entry of $\mMstar$ is observed with probability $p$ independently. 
In this setup, we do not distinguish between observed zeros (where the true entry of $\mMstar$ is zero) and missing entries, which are represented as zeros in $\mM$. 
One can verify that when $\mMstar = \lambdastar \vustar \vustart$ is rank-one, $B = \lambdastar\mu/np$ and $\sigma = \lambdastar\mu / n\sqrt{p}$.
Under this setting, it is typically required that $p \gtrsim (\mu \log^2 n )/ n$ \citep{chen2014coherent}, making the growth rate assumption in Equation~\eqref{eq:B-sigma-bound} reasonable. 
The observation probability $p$ depends on the incoherence parameter because a highly coherent $\vustar$ implies that the signal matrix is (approximately) sparse. 
Recovering sparse entries is impossible unless they are sampled.
Thus, when $\mu$ is large, $p$ must be larger as well.
\end{example}

\begin{example}[Low-rank networks] \label{ex:sparse-net}
    In network analysis, the model in Equation~\eqref{eq:asymm-model} is typically written as 
\begin{equation*}
    A_{ij} \sim \operatorname{Bernoulli}(P_{ij}), \quad i,j \in [n]
\end{equation*}
for an adjacency matrix $\mA$ and a probability matrix $\mP$. 
One has $B = 1$ and $\sigma = \sqrt{\rho}$, where $\rho = \max_{i,j} P_{ij}$. 
It is typically required that $\rho = \omega\left(\log n/n\right)$ for meaningful estimation to be feasible \citep[see, e.g.,][]{abbe2020entrywise, chen2021spectral}, and Equation~\eqref{eq:B-sigma-bound} is reasonable. 
\end{example}

\subsection{Preliminaries}
Before presenting our main results, we first establish several important preliminary facts.
Under Condition~\ref{assump:mag}, the higher-order moments of $H_{ij}$ are bounded by
\begin{equation} \label{eq:high-moment}
    \E |H_{ij}|^\ell \leq B^{\ell-2} \E H_{ij}^2 \leq \sigma^2 B^{\ell-2}, \quad \ell \geq 2, 
\end{equation}
which satisfies the Bernstein moment condition \citep[see Equation (2.15) in ][]{wainwright2019high}. 
Another immediate result of Assumption~\ref{assump:1} is the following high-probability spectral norm bound for $\mH$.
\begin{lemma} \label{lem:W-op}
    Under Assumption~\ref{assump:1}, there exists a universal constant $c_1$ such that with probability at least $1 - O(n^{-20})$, 
    \begin{equation*}
        \|\mH\| \leq c_1 \max\left\{\sigma \sqrt{n \log n}, B \log n \right\}.
    \end{equation*}
    The same bound also holds for $\|\mW\|$. 
\end{lemma}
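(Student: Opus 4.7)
The plan is to prove the bound on $\|\mH\|$ by a direct application of the (rectangular) matrix Bernstein inequality, and then handle $\|\mW\|$ by the symmetric version of the same inequality. I would decompose
\[
\mH \;=\; \sum_{1 \leq i,j \leq n} H_{ij}\, \ve_i \ve_j^\top,
\]
which writes $\mH$ as a sum of $n^2$ independent zero-mean random matrices (Assumption~\ref{assump:1}, part~\ref{assump:center}). Under the bounded-magnitude condition~\ref{item:assump-3a}, each summand satisfies $\|H_{ij}\,\ve_i\ve_j^\top\| = |H_{ij}| \leq B$ almost surely. For the matrix variance parameter, I would compute
\[
\Bigl\|\sum_{i,j} \E\bigl[H_{ij}^2\, \ve_i\ve_j^\top \ve_j\ve_i^\top\bigr]\Bigr\|
\;=\; \Bigl\|\mathrm{diag}\Bigl(\sum_j \sigma_{ij}^2\Bigr)\Bigr\| \;\leq\; n\sigma^2,
\]
and similarly $\|\sum_{i,j} \E[H_{ij}^2\, \ve_j\ve_i^\top\ve_i\ve_j^\top]\| \leq n\sigma^2$ by Assumption~\ref{assump:1}, part~\ref{assump:var}.

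I would then apply the rectangular matrix Bernstein inequality (e.g., Tropp's user-friendly tail bound) to obtain, for any $t > 0$,
\[
\Pr\bigl(\|\mH\| \geq t\bigr) \;\leq\; 2n\, \exp\!\left(-\frac{t^2/2}{n\sigma^2 + Bt/3}\right).
\]
Setting $t = c_1 \max\{\sigma\sqrt{n\log n},\, B\log n\}$ and splitting into the two cases of the maximum, one checks routinely that the denominator is dominated by $n\sigma^2$ in the first case (using $B\sqrt{\log n} \lesssim \sigma\sqrt{n}$, so that $Bt \lesssim n\sigma^2$) and by $Bt$ in the second case; either way the exponent is at least $c\, c_1 \log n$ for some absolute constant $c$, and choosing $c_1$ large enough makes the right-hand side $O(n^{-20})$.

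For the symmetric matrix $\mW$, I would use the same strategy with the decomposition $\mW = \sum_{i \leq j} W_{ij} \mS_{ij}$, where $\mS_{ij} = \ve_i\ve_j^\top + \ve_j\ve_i^\top$ for $i<j$ and $\mS_{ii} = \ve_i\ve_i^\top$. Each summand has operator norm at most $B$, and $\mS_{ij}^2$ is diagonal, so the variance proxy $\|\sum_{i\leq j} \sigma_{ij}^2 \mS_{ij}^2\|$ is again at most $n\sigma^2$ (using symmetry of the variances). Applying the symmetric matrix Bernstein inequality then yields an identical tail bound. There is no real obstacle here; the only step requiring any care is the case analysis verifying that $c_1$ can be chosen uniformly (independent of which branch of the maximum is active) so that the resulting probability is $O(n^{-20})$.
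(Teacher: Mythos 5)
Your proposal is correct and follows exactly the route the paper intends: the paper dispatches this lemma in one sentence by citing the matrix Bernstein inequality (Theorem~3.4 of \cite{chen2021spectral}), and your argument simply unpacks that application, with the correct rank-one decomposition, magnitude bound $B$, variance proxy $n\sigma^2$, and case analysis on which branch of the maximum dominates. The only thing worth noting is that the paper works under condition~\ref{item:assump-3a} for simplicity and handles~\ref{item:assump-3b} by truncation; your proof correctly assumes~\ref{item:assump-3a}, consistent with the paper's stated convention.
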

Lemma~\ref{lem:W-op} is a standard result, a consequence of the matrix Bernstein inequality.
See, for example, Theorem 3.4 in \cite{chen2021spectral}. 

Finally, we restate a classic result in Theorem~\ref{thm:neumann}, which is useful in deriving bounds for spectral estimators \citep{eldridge2018unperturbed}.
See Theorem 2 in \cite{chen2021asymmetry} for a proof. 
\begin{theorem}[Neumann trick] \label{thm:neumann}
    Consider the model in Equation~\eqref{eq:asymm-model} and suppose that $\|\mH\| \leq |\lambda_l|$ for some $l \in [n]$. 
    Then 
    \begin{equation} \label{eq:neumann:rank-r}
        \vu_l=\sum_{j=1}^r \frac{\lambda_j^{\star}}{\lambda_l}\left(\vu_j^{\star \top} \vu_l\right)\left\{\sum_{k=0}^{\infty} \frac{1}{\lambda_l^k} \mH^k \vu_j^{\star}\right\}. 
    \end{equation}
    In particular, if $\mMstar$ is rank one and $\|\mH\| \leq |\lambda_1|$, then
    \begin{equation} \label{eq:neumann:rank-1}
        \vu_1 = \frac{\lambdastar_1}{\lambda_1}\left(\vustart_1 \vu_1\right) \left\{ \sum_{k=0}^\infty \frac{1}{\lambda_1^k} \mH^k \vustar_1\right\}
    \end{equation}
\end{theorem}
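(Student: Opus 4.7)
The plan is to start from the eigenvalue equation $\mM \vu_l = \lambda_l \vu_l$ and rearrange it so that $\vu_l$ is expressed in terms of $\mH$, $\mMstar$, and $\lambda_l$, then expand the resulting resolvent as a geometric (Neumann) series. Concretely, since $\mM = \mMstar + \mH$, the eigenrelation rewrites as $(\lambda_l \mI - \mH)\vu_l = \mMstar \vu_l$. The hypothesis $\|\mH\| \le |\lambda_l|$ is what makes the operator $\lambda_l \mI - \mH$ invertible (I would observe that for the identity to be rigorous as a convergent series we want $\|\mH\|/|\lambda_l| < 1$, but the statement as given extends to equality by a standard limiting argument).

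Having inverted, I would write
\begin{equation*}
\vu_l = (\lambda_l \mI - \mH)^{-1} \mMstar \vu_l
 = \frac{1}{\lambda_l}\sum_{k=0}^{\infty} \frac{\mH^k}{\lambda_l^k}\, \mMstar \vu_l,
\end{equation*}
using the Neumann expansion $(\lambda_l \mI - \mH)^{-1} = \lambda_l^{-1} \sum_{k \ge 0} (\mH/\lambda_l)^k$, whose convergence in operator norm follows from $\|\mH/\lambda_l\| < 1$.

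Next I would plug in the spectral decomposition $\mMstar = \sum_{j=1}^r \lambda_j^\star \vu_j^\star \vu_j^{\star\top}$, so that $\mMstar \vu_l = \sum_{j=1}^r \lambda_j^\star (\vu_j^{\star\top} \vu_l)\, \vu_j^\star$. Substituting into the displayed identity above and pulling the finite sum over $j$ outside the infinite sum over $k$ yields exactly
\begin{equation*}
\vu_l = \sum_{j=1}^r \frac{\lambda_j^\star}{\lambda_l}(\vu_j^{\star\top}\vu_l)\sum_{k=0}^\infty \frac{1}{\lambda_l^k} \mH^k \vu_j^\star,
\end{equation*}
which is Equation~\eqref{eq:neumann:rank-r}. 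The rank-one identity Equation~\eqref{eq:neumann:rank-1} is then the specialization to $r = 1$.

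The only real subtlety is handling the boundary case $\|\mH\| = |\lambda_l|$, where the Neumann series is not absolutely convergent in operator norm; I would address this either by stating the hypothesis with strict inequality (matching how the result is used downstream, where operator-norm bounds on $\mH$ are strictly smaller than $|\lambda_l|$ with high probability under Lemma~\ref{lem:W-op}) or by a continuity/limiting argument letting $\|\mH\|/|\lambda_l|$ approach one. Beyond that, the proof is essentially an algebraic manipulation and is not expected to present any substantive obstacle.
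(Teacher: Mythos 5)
Your derivation is correct and is the standard argument: rewrite the eigenrelation as $(\lambda_l \mI - \mH)\vu_l = \mMstar\vu_l$, invert via the Neumann series, and substitute the spectral decomposition of $\mMstar$. The paper does not include a proof of this theorem but simply cites Theorem~2 of \cite{chen2021asymmetry}; the argument there is the same as yours. Your remark about the boundary case is well taken---strictly speaking the Neumann series requires $\|\mH\| < |\lambda_l|$, and the statement with $\leq$ is a slight imprecision inherited from the cited work; in every downstream application the strict inequality holds with high probability, so nothing is lost by reading the hypothesis as strict.
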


\section{MAIN RESULTS} \label{sec:main}

In light of Theorem~\ref{thm:neumann}, perturbation bounds for eigenvectors can be derived by controlling terms of the form $\vx^\top \mH^k \vy$, where $\vx$ and $\vy$ are fixed vectors.
Several results concerning this term exist \citep{tao2013outliers, erdos2013spectral, mao2021estimatingmixed, chen2021asymmetry, fan2022asymptotic, xie2024higherorderentrywise}, but they lack the necessary precision to yield optimal dependence on the coherence parameter $\mu$. 
In this section, we present refined results for controlling this term under both the asymmetric and symmetric noise settings.
In Section~\ref{sec:applications}, we apply these refined results to obtain several novel eigenvector and eigenvalue perturbation bounds.

\subsection{Asymmetric noise matrix} \label{sec:asym}

We adapt the proof strategy of Lemma 6.5 in \cite{erdos2013spectral} and first obtain a higher-order moment bound for the term $\vx^\top \mH^k \vy$ in Theorem~\ref{thm:IExHky-moment}. 
\begin{theorem}\label{thm:IExHky-moment}
    Consider a random matrix $\mH \in \R^{n\times n}$ with independent entries satisfying Assumption~\ref{assump:1} and the bound in Equation~\eqref{eq:B-sigma-bound}. 
    For any fixed vectors $\vx, \vy \in \R^{n}$, 
    any integer $k \geq 2$ and any even integer $p \geq 2$ such that $kp \le \log^3 n$, it holds for all suitably large $n$ that $\E \left(\vx^\top \mH^k \vy - \E\vx^\top \mH^k \vy \right)^{p}$ is bounded, up to a universal constant, by
    \begin{equation} \label{eq:moment-bound-2p}
    \begin{aligned}
    &2^{(k+1)p}(k p)^{k p} p (\|\vx\|_{\infty}\|\vy\|_{\infty})^p  \\
    &~~~~~~~~~\times\max\biggl\{ 
    \frac{\sigma^{k p} n^{\frac{k p}{2}}\!\left(\|\vx\|_0 \|\vy\|_0\right)^{p / 2}}{n^{\frac{p}{2}}(k p)^{k p / 2}}, \\
    &~~~~~~~~~~~~~~~~~~~~~
    \frac{ B^{pk}\sigma^{2 k} n^{k} \|\vx\|_0 \|\vy\|_0}{B^{2k} n (k p)^k} \biggr\}.
    \end{aligned}
    \end{equation}
\end{theorem}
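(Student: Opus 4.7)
The plan is to follow the high-moment combinatorial method of \cite{erdos2013spectral}, expanding $(\vx^\top \mH^k \vy - \E \vx^\top \mH^k \vy)^p$ into a sum indexed by $p$-tuples of length-$k$ walks on $[n]$ and grouping the resulting terms by the multigraph they generate. Writing
\begin{equation*}
\vx^\top \mH^k \vy = \sum_{i_0, i_1, \ldots, i_k} x_{i_0}\, y_{i_k}\, \prod_{j=1}^k H_{i_{j-1}, i_j},
\end{equation*}
the $p$-th moment becomes a sum over $p$-tuples of walks $(\vec{i}^{(1)}, \ldots, \vec{i}^{(p)})$ of length $k$, with signed contributions produced by the centering. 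Combining the $p$ walks into a single labeled multigraph $G$ with at most $(k+1)p$ vertices and $kp$ edges (counted with multiplicity), the entries are independent, centered, and symmetric (Condition~\ref{assump:symm}), so a tuple contributes only when every edge of $G$ appears with even multiplicity at least~$2$.

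For such a $G$ with edge multiplicities $2m_1, \ldots, 2m_q$ satisfying $m_1 + \cdots + m_q = kp/2$, the expected value of the associated monomial is bounded via Equation~\eqref{eq:high-moment} by $\prod_{i=1}^q \sigma^2 B^{2m_i - 2}$. Next I would count the number of vertex labelings leading to each isomorphism class. The coherence-sensitive step is here: each walk contributes one initial vertex, weighted by $|x_{i_0^{(\alpha)}}|$, and one terminal vertex, weighted by $|y_{i_k^{(\alpha)}}|$. Using $|x_i| \le \|\vx\|_\infty \indicator\{i \in \supp(\vx)\}$ and analogously for $\vy$, I pull out the clean prefactor $(\|\vx\|_\infty \|\vy\|_\infty)^p$ and restrict each of the $2p$ endpoint labels to the corresponding support, of sizes $\|\vx\|_0$ and $\|\vy\|_0$. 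The even-multiplicity constraint forces the endpoints to pair up in $G$, so when counting along a spanning forest of $G$ only $p/2$ of the $\vx$-endpoint labels and $p/2$ of the $\vy$-endpoint labels are genuinely free, contributing the key factor $(\|\vx\|_0 \|\vy\|_0)^{p/2}$, while each of the remaining internal free vertices contributes a factor~$n$.

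Finally, I would bound the number of isomorphism classes of such labeled multigraphs by $(kp)^{kp}$ and the auxiliary combinatorial choices (partitioning edges, orientations, pairing endpoints into equivalence classes) by $2^{(k+1)p}$, producing the global prefactor in the bound. The two cases inside the maximum arise from optimizing over the multiplicity profile: when every $m_i = 1$ the dominant contribution scales like $\sigma^{kp} n^{kp/2}(\|\vx\|_0\|\vy\|_0)^{p/2}/n^{p/2}$, whereas concentrating the mass on a few high-multiplicity edges yields the $B$-dependent regime $B^{pk} \sigma^{2k} n^k \|\vx\|_0 \|\vy\|_0 / (B^{2k} n (kp)^k)$; the assumption~$kp \le \log^3 n$ and Equation~\eqref{eq:B-sigma-bound} ensure that intermediate multiplicity profiles do not dominate.

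The main obstacle will be the vertex-counting step. A naive Erdős-style argument assigns every endpoint label over all of $[n]$, which produces a factor of $\sqrt{n}$ per endpoint and loses the coherence improvement. To get the tighter $(\|\vx\|_0 \|\vy\|_0)^{p/2}$ factor I need to separate endpoint from internal vertices throughout the spanning-forest labeling and track how the even-multiplicity and walk-incidence constraints force endpoint identifications; this bookkeeping, together with verifying that the residual combinatorial overhead stays within the $2^{(k+1)p}(kp)^{kp}$ budget and that the optimization over multiplicity profiles collapses to the two extreme regimes, is the delicate part of the argument.
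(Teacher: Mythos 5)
Your high-level plan — expand the $p$-th moment, group terms by the multigraph induced by the $p$ walks, use symmetry to kill odd multiplicities, bound each surviving monomial via Equation~\eqref{eq:high-moment}, and then do a careful support-aware vertex count — is essentially the strategy the paper follows (with the cosmetic difference that the paper builds a graph on the $pk$ ``edge positions'' $\calI = [p]\times[k]$ rather than directly on labeled vertices of $[n]$; these are dual pictures of the same object).

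However, the step you flag as ``the delicate part'' is not a bookkeeping exercise that a spanning-forest labeling handles automatically; it is where a genuinely new idea is required, and your sketch as stated is not correct. Your claim is that ``the even-multiplicity constraint forces the endpoints to pair up in $G$, so only $p/2$ of the $\vx$-endpoint labels and $p/2$ of the $\vy$-endpoint labels are genuinely free.'' The even-multiplicity constraint is a constraint on \emph{edges}, and it does not by itself force any pairing of the $2p$ walk endpoints; endpoints can be identified with internal vertices of other walks, can be shared in any pattern, or can all be distinct (the paper's own example with $p=4$, $k=2$ has all four $\vx$-endpoints distinct). What actually limits the count of ``free'' support-constrained vertices is the fact that each walk has \emph{exactly} $k$ steps, so every distinct $\mH$-entry lies on a directed path of length at most $k-1$ from a walk-start to a walk-end (Condition~\ref{cond:k-forest} in the paper). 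This walk-length constraint — not even multiplicity — is what guarantees many terminals. Turning it into the statement ``at most $\beta - t$ internal colors for a component with $\beta \ge (t-1)k+1$ nodes'' requires the whole chain Lemma~\ref{lem:tree-terminals} (lower bound on terminal count in a tree satisfying the length-$k$ condition) $\to$ Lemma~\ref{lem:forest-color} (forests) $\to$ Lemma~\ref{lem:loops} (arbitrary graphs, reduced to forests by a nontrivial cycle-removal argument that preserves the length-$k$ condition) $\to$ Lemmas~\ref{lem:term-bound-cc} and~\ref{lem:term-bound} (assembling the count). Your sketch neither invokes the length-$k$ constraint nor gives any mechanism for handling cycles in the multigraph, and without these the spanning-forest count degenerates to the naive $\sqrt{n}$-per-endpoint bound you correctly say you want to avoid.

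A secondary gap: the optimization ``over multiplicity profiles'' that you invoke at the end is more delicate than choosing between ``all $m_i=1$'' and ``a few high-multiplicity edges,'' because the endpoint count $\tau_L$ in Equation~\eqref{eq:term-bound} is itself a piecewise function of $L$ with four regimes (depending on where $L$ falls relative to multiples of $k$), and the paper must carry that case structure through Steps 4–5 before the $\max$ of two terms emerges. Your proposal does not address how the intermediate regimes with $(t-1)k+1 \le L < tk$ are bounded.
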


The proof of Theorem~\ref{thm:IExHky-moment} relies on combinatorial techniques. 
Our approach begins by expanding the higher-order moment into a sum of terms, each involving products of entries from $\mH$, $\vx$ and $\vy$. 
While many of these terms have an expectation of zero and vanish, the remaining ones require careful bounding, with their magnitude depending on how frequently different entries of $\mH$ appear in the product. 
Since these products represent moments of $\mH$, the indices in non-vanishing terms must satisfy certain structural constraints.
To systematically handle these terms, we analyze their structure and derive upper bounds in terms of $\vx, \vy, \sigma$ and $B$.
Furthermore, we employ graphical tools to enforce index constraints and facilitate counting of terms associated with each possible bound.
By carefully combining these elements, we ultimately establish the bound in Equation~\eqref{eq:moment-bound-2p}. 
Full details of the proof are provided in the supplementary materials.

Applying a higher-order Markov inequality and a union bound over all integers $2 \leq k \leq 20\log n$, we can translate Theorem~\ref{thm:IExHky-moment} into a high probability bound.
\begin{corollary} \label{cor:sparse-bound}
Under the same setting as Theorem~\ref{thm:IExHky-moment}, for all integers $2 \leq k \leq 20 \log n$ ,
\begin{equation} \label{eq:sparse-bound}
\begin{aligned}
    &\left|\vx^\top \mH^k \vy - \E \vx^\top \mH^k \vy \right| \lesssim 
    c_2^k \|\vx\|_{\infty}\|\vy\|_{\infty} \times \\
    &\max\Biggl\{(\sigma^2 n \log^3 n)^{\frac{k}{2}} \sqrt{\frac{\|\vx\|_0 \|\vy\|_0}{n} }, \left(B\log^3 n\right)^k\Biggr\}
\end{aligned}
\end{equation}
holds with probability at least $1 - \Otilde(n^{-40})$, where $c_2 > 0$ is a universal constant.  
\end{corollary}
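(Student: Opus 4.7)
The plan is to upgrade the $p$-th moment bound in Theorem~\ref{thm:IExHky-moment} into a tail bound via a $p$-th moment Markov inequality with $p$ chosen as a function of $k$, and then union-bound over $k \in \{2, \ldots, 20\log n\}$. Writing $Z_k = \vx^\top \mH^k \vy$, since $p$ is even,
\[
    \Pr\!\left( \left|Z_k - \E Z_k\right| \geq t \right) \;\leq\; t^{-p}\,\E\!\left[(Z_k - \E Z_k)^p\right],
\]
so it suffices to bound the $p$-th root of the right-hand side of Equation~\eqref{eq:moment-bound-2p} by $t/e$. I would take $p$ to be the largest even integer with $kp \leq \log^3 n$, giving $p \asymp \log^3 n / k \gtrsim \log^2 n$; this choice saturates the hypothesis of Theorem~\ref{thm:IExHky-moment} and makes $e^{-p}$ super-polynomially small in $n$.

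For the first branch of the maximum in Equation~\eqref{eq:moment-bound-2p}, taking $p$-th roots and absorbing the $p^{1/p} = O(1)$ factor produces an upper bound of order $2^k (kp)^{k/2}\,\|\vx\|_{\infty}\|\vy\|_{\infty}\, \sigma^k n^{k/2}\sqrt{\|\vx\|_0\|\vy\|_0/n}$ after combining with the $(kp)^k$ prefactor. Since $kp \leq \log^3 n$ we have $(kp)^{k/2} \leq \log^{3k/2} n$, which recovers the $(\sigma^2 n \log^3 n)^{k/2}$ factor in the statement up to an absorbable $c_2^k$ constant.

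The main obstacle is the second branch, whose $p$-th root contains the residual polynomial-in-$n$ factors $n^{(k-1)/p}$, $(\|\vx\|_0 \|\vy\|_0)^{1/p}$ and $(\sigma/B)^{2k/p}$ that must be shown to be $O(1)^k$. With $p \asymp \log^3 n / k$ and $k \leq 20\log n$, each of the relevant exponents is at most $O(k/\log^2 n) = O(1/\log n)$, so each such factor is bounded by a universal constant (the $\sigma/B$ factor is controlled because the regime of interest has $\sigma \lesssim B$, which follows from $\E H_{ij}^2 \leq B^2$ under Assumption~\ref{assump:1}). The surviving $(kp)^{k(p-1)/p} \leq (kp)^k \leq \log^{3k} n$ prefactor then produces the targeted $(B\log^3 n)^k$.

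Putting everything together, Markov's inequality applied at the level prescribed by the maximum of these two quantities yields per-$k$ failure probability at most $e^{-p} \leq e^{-\log^2 n/20}$, which is far smaller than what the corollary requires. A union bound over the at most $20\log n$ admissible values of $k$ contributes only a logarithmic factor, giving the claimed $1 - \Otilde(n^{-40})$ probability after choosing $c_2$ large enough to absorb the $2^{k+1}$ and remaining $O(1)^k$ prefactors, thereby matching Equation~\eqref{eq:sparse-bound}.
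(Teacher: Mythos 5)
Your proposal is correct and follows essentially the same route as the paper's proof: apply the even-$p$ Markov inequality to the moment bound of Theorem~\ref{thm:IExHky-moment}, choose $p \asymp \log^3 n / k$ so that $kp \leq \log^3 n$, observe that $(kp)^{k}$ combines with the denominator factors to produce $(\log^3 n)^{k/2}$ and $(\log^3 n)^{k}$ while the residual $n^{(k-1)/p}$, $(\|\vx\|_0\|\vy\|_0)^{1/p}$ and $(\sigma/B)^{2k/p}$ factors are all $O(1)$ in the regime $k\leq 20\log n$ (the last because Assumption~\ref{assump:1}\ref{item:assump-3a} forces $\sigma \leq B$), and finish with a union bound over $k$. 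The paper absorbs these residual factors by including them explicitly in the threshold $t$ and then showing the resulting probability is $O(n^{-40}\log^3 n)$, whereas you fold them into the constant $c_2^k$ and report the sharper $e^{-\Omega(\log^2 n)}$ per-$k$ failure probability; this is a bookkeeping difference, not a different argument.
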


As discussed in Section~\ref{sec:intro}, another critical element in establishing our improved bounds is a novel decomposition technique for unit vectors.  
For any fixed unit vector $\vx$, we can decompose it into a sum of $m = O(\log n)$ vectors $\{\vx^{(i)}\}_{i=1}^{m}$, where each $\vx^{(i)}$ satisfies $\|\vx^{(i)}\|_{\infty} \sqrt{\|\vx^{(i)}\|_{0}} = O(1)$.
Using this decomposition, our main moment bound follows immediately from Corollary~\ref{cor:sparse-bound}.
Proof details, including the unit vector decomposition, are in the supplementary materials.

\begin{theorem}\label{thm:main}
Under the same setting as Theorem~\ref{thm:IExHky-moment}, for any fixed unit vectors $\vx$ and $\vy \in \R^n$, it holds with probability at least $1 - \Otilde(n^{-40})$ that for all integers $k$ satisfying $2 \leq k \leq 20\log n$,
\begin{equation} \label{eq:main-bound}
\begin{aligned}
    &\left|\vx^\top \mH^k \vy - \E \vx^\top \mH^k \vy \right| \lesssim c_2^k (\log n)^{2} \times\\
    &\max\Biggl\{\! \sqrt{ \frac{ (\sigma^2 n\log^3 \! n)^{k} }{n}},  (B\log^3 n)^k \|\vx\|_{\infty}\|\vy\|_{\infty} \! \Biggr\} .
\end{aligned} \end{equation}
\end{theorem}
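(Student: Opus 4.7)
The plan is to exploit the bilinearity of the map $(\vx,\vy) \mapsto \vx^\top \mH^k \vy - \E \vx^\top \mH^k \vy$ together with a dyadic magnitude-based decomposition that reduces the general unit-vector bound to repeated application of Corollary~\ref{cor:sparse-bound}. For a unit vector $\vx \in \R^n$, I would partition the coordinates into dyadic buckets: for each $i \in \{1, 2, \dots, m\}$ with $m := \lceil 2 \log_2 n \rceil = O(\log n)$, let $\vx^{(i)}$ denote the restriction of $\vx$ to coordinates $j$ with $|x_j| \in (2^{-i}, 2^{-i+1}]$, and let $\vx_{\text{tail}}$ collect the coordinates with $|x_j| \le 2^{-m}$. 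By construction $\|\vx^{(i)}\|_\infty \le 2^{-i+1}$, and since $\|\vx\|_2 \le 1$ at most $4^i$ coordinates can exceed $2^{-i}$ in magnitude, so $\|\vx^{(i)}\|_0 \le 4^i$. These give the crucial balance
\begin{equation*}
\|\vx^{(i)}\|_\infty \sqrt{\|\vx^{(i)}\|_0} \le 2
\end{equation*}
together with the tail estimate $\|\vx_{\text{tail}}\|_2 \le n^{-3/2}$. I would decompose $\vy$ analogously.

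By linearity of the expectation, $\vx^\top \mH^k \vy - \E \vx^\top \mH^k \vy$ equals the sum over the $m^2 = O(\log^2 n)$ bucket pairs of $\vx^{(i)\top} \mH^k \vy^{(j)} - \E \vx^{(i)\top} \mH^k \vy^{(j)}$, plus tail cross-terms. For each non-tail pair, Corollary~\ref{cor:sparse-bound} applies; the balance property collapses the factor $\|\vx^{(i)}\|_\infty \|\vy^{(j)}\|_\infty \sqrt{\|\vx^{(i)}\|_0 \|\vy^{(j)}\|_0 / n}$ to $O(1/\sqrt{n})$ in the first argument of the max, producing $O\bigl(c_2^k \sqrt{(\sigma^2 n \log^3 n)^k / n}\bigr)$; in the second argument, I bound $\|\vx^{(i)}\|_\infty \|\vy^{(j)}\|_\infty \le \|\vx\|_\infty \|\vy\|_\infty$ to obtain $c_2^k (B \log^3 n)^k \|\vx\|_\infty \|\vy\|_\infty$. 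Applying the triangle inequality over the $O(\log^2 n)$ pairs, and using $\max\{A,B\} \le A+B$ once to separate the two regimes, produces the desired $(\log n)^2$ prefactor in front of the max in Equation~\eqref{eq:main-bound}. The tail cross-terms are handled by the crude bound $|\va^\top \mH^k \vb| \le \|\va\|_2 \|\mH\|^k \|\vb\|_2$ combined with Lemma~\ref{lem:W-op}, which together with $\|\vx_{\text{tail}}\|_2 \le n^{-3/2}$ makes each tail contribution polynomially smaller than both arguments of the target max (using $\|\vx\|_\infty \|\vy\|_\infty \ge 1/n$ for unit vectors to absorb the $B$-regime). A union bound over $2 \le k \le 20\log n$ and the $O(\log^2 n)$ bucket pairs preserves the failure probability $\Otilde(n^{-40})$ inherited from Corollary~\ref{cor:sparse-bound}.

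The principal subtlety lies in the tail argument: one must verify that the single threshold $2^{-m}$ simultaneously (i) renders the random tail contribution $\vx_{\text{tail}}^\top \mH^k \vy$ negligible on the good event of Lemma~\ref{lem:W-op}, (ii) controls the centering $\E \vx_{\text{tail}}^\top \mH^k \vy$ via a companion operator-norm moment bound, and (iii) keeps $m = O(\log n)$ so that the bucket count remains $O(\log^2 n)$. The choice $m = \lceil 2\log_2 n \rceil$ accomplishes all three, but the case analysis against both arguments of the max in Equation~\eqref{eq:main-bound} is the most delicate piece; once that bookkeeping is in place, the rest of the proof is a direct application of the sparse bound pair-by-pair.
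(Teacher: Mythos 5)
Your decomposition captures the same key mechanism as the paper's proof: split $\vx$ (and $\vy$) into $O(\log n)$ magnitude buckets, each satisfying the balance $\|\vx^{(i)}\|_\infty\sqrt{\|\vx^{(i)}\|_0} = O(1)$, apply Corollary~\ref{cor:sparse-bound} pairwise, then sum over $O(\log^2 n)$ pairs for the $(\log n)^2$ prefactor. The difference is in the cutoff, and it creates work you could have avoided. You push the dyadic decomposition down to magnitude $2^{-m}\approx n^{-2}$, which forces a separate tail term $\vx_{\text{tail}}$ to be controlled by operator-norm arguments. The paper instead stops at magnitude $n^{-1/2}$: the last bucket $I_{m+1}=[0,n^{-1/2}]$ collects every remaining coordinate and already satisfies the balance, because $\|\vxb^{(m+1)}\|_\infty \le n^{-1/2}$ and the trivial bound $\|\vxb^{(m+1)}\|_0 \le n$ give $\|\vxb^{(m+1)}\|_\infty\sqrt{\|\vxb^{(m+1)}\|_0}\le 1$. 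So Corollary~\ref{cor:sparse-bound} applies uniformly to all $(m+1)^2$ pairs and there is no tail at all.

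The genuine gap in your plan is the centering of the tail term. Controlling the random part $\vx_{\text{tail}}^\top\mH^k\vy$ via $\|\vx_{\text{tail}}\|_2\|\mH\|^k$ and Lemma~\ref{lem:W-op} is fine on the high-probability event, but you also need $\bigl|\E\,\vx_{\text{tail}}^\top\mH^k\vy\bigr|$, and Lemma~\ref{lem:W-op} only gives a high-probability bound on $\|\mH\|$, not a moment bound on $\|\mH\|^k$. You gesture at a "companion operator-norm moment bound," but neither state nor cite it; making it rigorous requires either a $k$-th moment concentration result for $\|\mH\|$ under Assumption~\ref{assump:1} or a direct expectation estimate in the spirit of the paper's Lemma~\ref{lem:xHky-mean-sparse} applied to $\vx_{\text{tail}}$. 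Both are doable, but the paper's choice of cutoff at $n^{-1/2}$ sidesteps the issue entirely — I'd recommend adopting it, at which point your proof becomes the paper's.
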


Corollary~\ref{cor:sparse-bound} and Theorem~\ref{thm:main} establish a concentration result of $\vx^\top \mH^{k} \vy$ about its mean. 
In order to control $\vx^\top \mH^{k} \vy$ (and thus control the vectors in THeorem~\ref{thm:neumann}), it remains for us to bound $\E \vx^\top \mH^{k} \vy$.
Lemma~\ref{lem:xHky-mean} below provides such a bound.
To establish Lemma~\ref{lem:xHky-mean}, we follow a similar proof strategy as Theorem~\ref{thm:main}.
We first establish a coars bound, then refine it via the unit vector decomposition discussed after Corollary~\ref{cor:sparse-bound} above.
Details are provided in the supplementary materials.

\begin{lemma} \label{lem:xHky-mean}
    Under the same setting as Theorem~\ref{thm:main}, for all $k$ such that $2 \leq k\leq 20\log n$, 
    \begin{equation} \label{eq:xHky-mean-bound}
    \begin{aligned}
        &\left|\E \left[\vx^\top \mH^k \vy\right]\right| \lesssim 
        (\log n)^2 (2k)^k \times \\
        &~~~~~~~~~~~~~\max \left\{ \frac{\sigma^2 B^{k-2}}{k} , \left(\frac{\sigma^2 n}{k}\right)^{k/2-2}\right\}. 
    \end{aligned}
    \end{equation}
\end{lemma}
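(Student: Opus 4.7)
The plan is to follow the two-step strategy outlined in the paragraph preceding the lemma: first establish a coarse bound on $|\E\vx^\top \mH^k \vy|$ whose dependence on the test vectors is captured through $\|\vx\|_0, \|\vy\|_0, \|\vx\|_\infty, \|\vy\|_\infty$, and then upgrade it to the stated bound for unit vectors by invoking the decomposition used in the proof of Theorem~\ref{thm:main}. For odd $k$, the sign-flip symmetry of $\mH$ (Condition~\ref{assump:symm}) combined with $\mH^k\to (-1)^k\mH^k$ forces $\E\vx^\top \mH^k \vy=0$, so I may assume $k$ is even.

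For the coarse bound, I would begin with the walk expansion
\begin{equation*}
\E\vx^\top \mH^k \vy = \sum_{i_0,\dots,i_k} x_{i_0}\, y_{i_k}\, \E\prod_{t=1}^k H_{i_{t-1} i_t}.
\end{equation*}
Condition~\ref{assump:symm} together with independence implies that only walks in which every directed edge $(i_{t-1},i_t)$ appears an even number of times contribute. For any such surviving walk with $\ell_e$ distinct directed edges of even multiplicities $m_1,\dots,m_{\ell_e}$ summing to $k$, Equation~\eqref{eq:high-moment} gives $|\E\prod H| \le \sigma^{2\ell_e}B^{k-2\ell_e}$. The associated directed multigraph is connected, and the even-degree constraint at every vertex forces $i_0 = i_k$ and $\ell_v \le \ell_e + 1 \le k/2+1$. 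I would then group walks by $\ell_e$, bound the number of shapes on the underlying skeleton by a factor of order $(2k)^k$ via standard Catalan-type estimates, and count compatible vertex labellings by $n^{\ell_v-1}$ together with the endpoint weight $\sum_i |x_i y_i|$ estimated crudely by $\|\vx\|_\infty \|\vy\|_\infty \cdot |\supp(\vx)\cap\supp(\vy)|$. The two extremal profiles $\ell_e=1$ (all traversals on a single edge, producing the $\sigma^2 B^{k-2}/k$ branch) and $\ell_e=k/2$ (all distinct edges doubled, producing the $(\sigma^2 n/k)^{k/2-2}$ branch) dominate the max, yielding a coarse estimate of the schematic form
\begin{equation*}
|\E\vx^\top \mH^k \vy| \lesssim (2k)^k \|\vx\|_\infty\|\vy\|_\infty \max\!\left\{\tfrac{\sigma^2 B^{k-2}}{k}\|\vx\|_0\|\vy\|_0,\; \bigl(\tfrac{\sigma^2 n}{k}\bigr)^{k/2-2}\tfrac{\|\vx\|_0\|\vy\|_0}{n^2}\right\}\!.
\end{equation*}

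For the refinement, I would invoke the decomposition discussed after Corollary~\ref{cor:sparse-bound}: write $\vx=\sum_{i=1}^m \vx^{(i)}$ and $\vy=\sum_{j=1}^m \vy^{(j)}$ with $m=O(\log n)$ and each piece satisfying $\|\vx^{(i)}\|_\infty\sqrt{\|\vx^{(i)}\|_0}=O(1)$ (and similarly for $\vy$). By bilinearity, I would apply the coarse bound to each of the $m^2=O(\log^2 n)$ pairs $(\vx^{(i)},\vy^{(j)})$; on each pair the products $\|\vx^{(i)}\|_\infty^2\|\vx^{(i)}\|_0$ and $\|\vy^{(j)}\|_\infty^2\|\vy^{(j)}\|_0$ are $O(1)$, so the sparsity factors exactly cancel the sup-norm factors, leaving only the $\sigma, B, n, k$ dependence. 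Summing over pairs produces the $(\log n)^2$ prefactor, matching Equation~\eqref{eq:xHky-mean-bound}. The main obstacle is the combinatorial bookkeeping in the coarse bound: one must cleanly enumerate closed walks of length $k$ with even edge multiplicities, track the dependence on both $\ell_e$ and $\ell_v$ jointly, and verify that every intermediate multiplicity profile is dominated by one of the two extremes $\ell_e\in\{1, k/2\}$ so that only two branches survive in the max. The symmetry condition is essential here, as it forces even multiplicities and eliminates the odd-moment regimes that would otherwise require separate analysis.
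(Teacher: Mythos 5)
Your overall two-step strategy—establish a coarse bound depending on $\|\vx\|_0, \|\vy\|_0, \|\vx\|_\infty, \|\vy\|_\infty$, then upgrade via the dyadic unit-vector decomposition—is exactly the paper's strategy (Lemma~\ref{lem:xHky-mean-sparse} followed by the decomposition from Theorem~\ref{thm:main}'s proof). However, there are substantial gaps between your walk-counting sketch and the coarse bound you display, and the sketch as written would not survive the refinement step.

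First, the displayed coarse bound is internally inconsistent with the counting you describe. You correctly observe that the even-multiplicity constraint forces $i_0 = i_k$, so the endpoint is weighted by $\sum_i |x_i y_i| \le \|\vx\|_\infty\|\vy\|_\infty S_{xy}$, and the remaining $\ell_v - 1$ vertices contribute $n^{\ell_v - 1}$. That produces per-profile contributions of the form $\|\vx\|_\infty\|\vy\|_\infty\, S_{xy}\, n^{\ell_v-1}\, \sigma^{2\ell_e} B^{k-2\ell_e}$. Yet your displayed formula has $\|\vx\|_0\|\vy\|_0$ in the first branch (not $S_{xy}$), a factor $\|\vx\|_0\|\vy\|_0/n^2$ in the second branch that does not arise from the count you describe, and the exponent $k/2-2$ on $\sigma^2 n/k$ — which is the exponent in the \emph{final} lemma statement, not what the $\ell_e = k/2$, $\sigma$-power counting gives (which is $k/2$). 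The paper's coarse bound (Lemma~\ref{lem:xHky-mean-sparse}) has exponent $k/2$ and $\|\vx\|_0\|\vy\|_0/n^2$; it obtains these via the connected-component graph $\Gnew$ and the coloring lemmas (\ref{lem:term-bound-cc}, \ref{lem:term-bound}), specifically the pigeonhole argument showing that when $L < k$ distinct entries of $\mH$ occur, the total number of distinct colors is $\le L$ rather than $\le L+1$. Your bound $\ell_v \le \ell_e + 1$ omits this saving.

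Second, and more importantly, the sketch as given is too coarse to yield the target. With $\ell_e = k/2$ and $\ell_v \le \ell_e + 1 \le k/2+1$, your walk count gives $S_{xy}\, n^{k/2}$ free labellings times $\sigma^k$ for the moments. After the dyadic decomposition, $\|\vx^{(i)}\|_\infty\|\vy^{(j)}\|_\infty S_{xy}^{(i,j)} \le (\|\vx^{(i)}\|_0\|\vy^{(j)}\|_0)^{1/2}\|\vx^{(i)}\|_\infty\|\vy^{(j)}\|_\infty \lesssim 1$, so each pair contributes on the order of $(\sigma^2 n)^{k/2}$ — which is much larger than the lemma's $(\sigma^2 n/k)^{k/2-2}$. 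Relatedly, the claim that ``the sparsity factors exactly cancel the sup-norm factors'' only holds when the coarse bound carries $\|\vx\|_\infty\|\vy\|_\infty S_{xy}$ (as the paper's first branch does); with the $\|\vx\|_\infty\|\vy\|_\infty\|\vx\|_0\|\vy\|_0$ combination you wrote, the product $\|\vx^{(i)}\|_\infty\|\vx^{(i)}\|_0$ is only $O(\|\vx^{(i)}\|_0^{1/2})$, not $O(1)$, so the cancellation is not exact. To make this argument work you would need to reproduce the paper's more careful enumeration (Section~\ref{sec:bound}), which separately tracks terminal versus internal colors and exploits the $L<k$ pigeonhole saving; your $S_{xy}\cdot n^{\ell_v-1}$ heuristic does not recover it.

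Your observation that Condition~\ref{assump:symm} annihilates $\E\vx^\top\mH^k\vy$ for odd $k$ is correct and a clean simplification the paper does not explicitly exploit in the asymmetric case; it is a nice touch but does not rescue the even-$k$ combinatorics.
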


\begin{remark} \label{rem:xHky-bound}
    Note that under the bound in Equation~\eqref{eq:B-sigma-bound}, for all $k \geq 2$, Equation~\eqref{eq:xHky-mean-bound} is of smaller order compared to the bound given in Equation~\eqref{eq:main-bound}.
    We conclude that for all $2 \leq k \leq 20 \log n$,
    \begin{equation} \label{eq:xHky-bound}
    \begin{aligned}
        &\left|\vx^\top \mH^k \vy\right| \lesssim c_2^k (\log n)^2 \times \\
        &~~\max\left\{\! \sqrt{\! \frac{ \left(\sigma^2 n \log^3 \! n\right)^{k}}{n}} , (\!B\!\log^3\! n)^k \|\vx\|_{\infty} \|\vy\|_{\infty} \!\right\}
    \end{aligned}
    \end{equation}
    holds with probability at least $1 - \Otilde(n^{-40})$. 
    The same bound as in Equation~\eqref{eq:xHky-bound} can be derived by directly applying our proof strategy for Theorems~\ref{thm:IExHky-moment} and~\ref{thm:main} to the term $\vx^\top\mH^k \vy$.
    We opt to work with $\vx^\top\mH^k \vy - \E \vx^\top \mH^k \vy$ because this approach allows us to extend our result to the case where $\mH$ is a symmetric matrix, as detailed in Theorem~\ref{thm:symm-concentration}.
\end{remark}

We pause to provide some intuitive explanation as to why the form of Equation~\eqref{eq:xHky-bound} is to be expected.
The noise parameter $B$ comes into play when there are insufficient noise entries for $\vx^\top \mH^k \vy$ to concentrate around its population mean.
This happens when the mass of $\vx$ and $\vy$ are concentrated on a few entries (i.e., $\vx$ and $\vy$ are localized). 
As such, we expect $B$ to appear in conjunction with $\|\vx\|_{\infty}$ and $\|\vy\|_{\infty}$. 
In contrast, $\sigma$ characterizes the second moment of the noise entries. 
It represents an average quantity that should not be influenced by any individual element of $\vx$ or $\vy$. 
As a result, we expect $\sigma$ to appear independently of $\|\vx\|_{\infty}$ or $\|\vy\|_{\infty}$, which is indeed the case in Equation~\eqref{eq:xHky-bound}. 

Finally, we control $\vx^\top \mH^k \vy$ when $k = 1$, which can be easily established by Bernstein's inequality.
\begin{lemma} \label{lem:xHy-bound}
    Under the same setting as Theorem~\ref{thm:main},
    \begin{equation} \label{eq:xHy-bound}
        \left|\vx^\top \!\mH \vy\right| \leq c_2\!\max \!\left\{\sigma \!\sqrt{\log n}, \|\vx\|_{\infty} \|\vy\|_{\infty} B \!\log n\right\}
    \end{equation}
    holds with probability at least $1 - O(n^{-40})$, where $c_2$ is the same universal constant as in Equation~\eqref{eq:xHky-bound}.
\end{lemma}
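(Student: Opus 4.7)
The plan is to apply the scalar Bernstein inequality directly to the linear form
$$\vx^\top \mH \vy \;=\; \sum_{i,j=1}^n x_i\, y_j\, H_{ij},$$
which is a sum of $n^2$ independent zero-mean random variables under Assumption~\ref{assump:1}. The summands $Z_{ij} = x_i y_j H_{ij}$ are mean-zero because $\E[H_{ij}] = 0$, and they are independent because the $H_{ij}$ are independent.

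First I would control the two parameters that drive Bernstein's inequality. For the almost-sure magnitude, Condition~\ref{item:assump-3a} gives
$$|Z_{ij}| \;\leq\; B\,|x_i|\,|y_j| \;\leq\; B\,\|\vx\|_\infty \|\vy\|_\infty.$$
For the variance, Condition~\ref{assump:var} gives
$$\sum_{i,j} \VAR(Z_{ij}) \;=\; \sum_{i,j} x_i^2 y_j^2 \sigma_{ij}^2 \;\leq\; \sigma^2 \|\vx\|_2^2 \|\vy\|_2^2 \;=\; \sigma^2,$$
where the last equality uses the hypothesis that $\vx$ and $\vy$ are unit vectors.

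Next I would apply the standard Bernstein bound
$$\Pr\Bigl\{\bigl|\vx^\top \mH \vy\bigr| \geq t\Bigr\} \;\leq\; 2\exp\!\left(-\frac{t^2/2}{\sigma^2 + \tfrac{1}{3} B \|\vx\|_\infty \|\vy\|_\infty \, t}\right),$$
and choose
$$t \;=\; c_2 \max\bigl\{\sigma\sqrt{\log n},\; \|\vx\|_\infty \|\vy\|_\infty B \log n\bigr\}$$
for a sufficiently large universal constant $c_2$. In the sub-Gaussian regime (when the first argument of the max dominates), the exponent becomes $-\Omega(\log n)$ from the $\sigma^2$ term; in the sub-exponential regime, it becomes $-\Omega(\log n)$ from the linear term. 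Tuning $c_2 \geq 100$ (say) makes the tail probability $O(n^{-40})$, yielding the advertised bound.

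There is essentially no obstacle here: the lemma is a textbook application of Bernstein, and the only minor points are (i) verifying that $\|\vx\|_2 = \|\vy\|_2 = 1$ gives the clean variance bound $\sigma^2$, and (ii) matching the universal constant $c_2$ to the one used in Equation~\eqref{eq:xHky-bound}, which we may do by taking $c_2$ to be the larger of the two constants produced by Theorem~\ref{thm:main} and the Bernstein argument here.
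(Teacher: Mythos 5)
Your proof is correct and takes essentially the same route as the paper's: both apply the scalar Bernstein inequality to the sum $\sum_{i,j} x_i y_j H_{ij}$ with variance proxy $\nu = \sigma^2$ (from the unit-norm assumption) and almost-sure bound $L = B\|\vx\|_\infty\|\vy\|_\infty$, then choose $t = c_2\max\{\sigma\sqrt{\log n}, \|\vx\|_\infty\|\vy\|_\infty B\log n\}$ for $c_2$ large enough to land at the $O(n^{-40})$ tail.
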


\subsection{Symmetric noise matrix} \label{sec:symmetric}
Our result in Theorem~\ref{thm:main} can be directly extended to the case where the noise matrix is symmetric.
Theorem~\ref{thm:symm-concentration} and Lemma~\ref{lem:symm-mean} provide analogues of our previous results, specialized to the symmetric noise case.
Recalling from Equation~\eqref{eq:symm-model} that we use $\mW$ to denote a symmetric noise matrix, Theorem~\ref{thm:symm-concentration} establishes concentration results for $\vx^\top \mW^k \vy$ around $\E \vx^\top \mW^k \vy$, while Lemma~\ref{lem:symm-mean} controls $\E \vx^\top \mW^k \vy$.

\begin{theorem} \label{thm:symm-concentration}
    Consider a symmetric noise matrix $\mW \in \R^{n\times n}$ with independent entries $(W_{ij})_{1\leq i \leq j\leq n}$ satisfying Assumption~\ref{assump:1} and the assumption in Equation~\eqref{eq:B-sigma-bound}. 
    For any fixed unit vectors $\vx, \vy \in \R^{n}$ and all integers $1 \leq k \leq 20\log n$, we have 
    \begin{equation} \label{eq:symm-high-prob-bound}
    \begin{aligned}
        &\left|\vx^\top \mW^k \vy - \E \vx^\top \mW^k \vy\right| \lesssim (2c_2)^k(\log n)^2 \times \\
        &\max \!\left\{ \!\sqrt{ \frac{ \left(\sigma^2 n \log^3 n\right)^{k} }{ n } }, (B \log^3 n)^k\|\vx\|_{\infty}\|\vy\|_{\infty}\right\}
    \end{aligned}
    \end{equation}
    holds with probability at least $1 - \Otilde(n^{-20})$, where $c_2$ is the same universal constant as in Theorem~\ref{thm:main}.
\end{theorem}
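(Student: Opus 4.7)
The plan is to follow the three-stage strategy used for Theorem~\ref{thm:main}: (i) bound the $p$-th central moment $\E(\vx^\top \mW^k \vy - \E\vx^\top \mW^k \vy)^p$ for even $p$ via a combinatorial expansion (analogous to Theorem~\ref{thm:IExHky-moment}); (ii) convert this to a high-probability bound for sparse inputs by a higher-order Markov inequality together with a union bound over $2 \leq k \leq 20 \log n$ (analogous to Corollary~\ref{cor:sparse-bound}); and (iii) pass from the sparse-support bound to the unit-vector bound of interest via the decomposition of a general unit vector into $O(\log n)$ vectors with controlled $\|\cdot\|_{\infty}\sqrt{\|\cdot\|_0}$. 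The $k=1$ case reduces to Bernstein's inequality exactly as in Lemma~\ref{lem:xHy-bound}.

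The only step that genuinely requires modification is the combinatorial argument in stage (i). Expanding the central moment yields a sum over tuples of $p$ length-$k$ index walks, and by Assumption~\ref{assump:1}(\ref{assump:symm}) only those tuples in which every noise random variable appears with even multiplicity contribute. In the asymmetric setting of Theorem~\ref{thm:IExHky-moment}, the independent noise variables are indexed by ordered pairs $(i,j)$; in the symmetric setting, $W_{ij} = W_{ji}$ means they are indexed by unordered pairs $\{i,j\}$, so every contributing asymmetric configuration remains contributing here, and additional configurations arise from traversing some edges in the reversed orientation. Since this edge-reversal choice can be made independently at up to $k$ positions along each walk, the number of nonvanishing configurations inflates by a factor of at most $2^k$, while the magnitude bound on each configuration in terms of $\sigma$, $B$, $\|\vx\|_\infty$, $\|\vy\|_\infty$, $\|\vx\|_0$, $\|\vy\|_0$ is unchanged (these depend only on the variance and magnitude conditions, which apply identically to $W_{ij}$ and $H_{ij}$). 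This inflation is precisely what turns $c_2^k$ in Theorem~\ref{thm:main} into $(2c_2)^k$ in Equation~\eqref{eq:symm-high-prob-bound}.

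With the modified moment bound in hand, stages (ii) and (iii) proceed essentially verbatim from the asymmetric proof: Markov's inequality converts moments to tail probabilities, and the unit-vector decomposition is a deterministic statement about $\vx$ and $\vy$ that is insensitive to the noise model. The slightly weaker probability $1 - \Otilde(n^{-20})$ reflects a mild reduction in the admissible $p$ needed to absorb the extra $2^k$ factor inside Markov's step.

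The main obstacle will be carefully carrying out the graphical counting in the undirected-edge setting and verifying that the $2^k$ inflation is the entire price paid. Specifically, one must check that no new combinatorial classes of "heavy" configurations (i.e., configurations whose individual bounds are worse than in the asymmetric case) emerge when edges may be traversed bidirectionally, and one must treat diagonal contributions $i_a = i_{a+1}$, for which $W_{ii}$ has no reversed counterpart, as a separate but well-behaved case. Once this verification is done, the remainder is a routine re-run of the asymmetric argument.
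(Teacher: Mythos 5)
You take a genuinely different route than the paper, and your route has an unresolved gap. The paper's proof of Theorem~\ref{thm:symm-concentration} does not redo the combinatorics at all: it decomposes $\mW = \mH_1 + \mH_2$ into the lower-triangular part (with diagonal) and the strictly-upper-triangular part, expands $\mW^k = \sum_{d \in \{1,2\}^k} \prod_{i=1}^k \mH_{d_i}$, observes (following Lemma~6.5 of Erd\H{o}s et al.) that each of the $2^k$ summands is controlled exactly as in Theorem~\ref{thm:main} because each $\mH_j$ is an asymmetric matrix with independent entries, and then takes a union bound over the $2^k$ terms. That union bound is the entire source of the factor $(2c_2)^k$ and of the degraded probability $1 - \Otilde(n^{-20})$, and the combinatorial machinery from Section~\ref{sec:thm2:tech} is reused verbatim as a black box.

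Your proposal instead tries to rerun the moment expansion directly on $\mW$, arguing that the only change is that edges now match as unordered pairs and that the count of nonvanishing index tuples inflates by ``at most $2^k$'' via ``edge-reversal choices.'' This is where the gap lies. First, reversing an individual step $j_l \to j_{l+1}$ in a walk is not a well-defined local operation --- it breaks the walk structure, since adjacent steps must share a vertex --- so the ``up to $k$ independent reversal choices per walk'' picture does not literally parameterize the extra nonvanishing configurations (for instance, walk $1 \to 2 \to 3$ paired with the fully reversed walk $3 \to 2 \to 1$ is a new nonvanishing configuration in the symmetric case, and it is not reached by reversing individual edges within a walk). Second, and more seriously, the counting argument that produces the crucial $(\|\vx\|_0\|\vy\|_0)^{p/2}$ and $\|\vx\|_0\|\vy\|_0$ terms in the moment bound is the directed-graph machinery of Rule~\ref{rule:(ii)} together with Lemmas~\ref{lem:color-bound}--\ref{lem:term-bound}. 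That argument is built on ordered-pair matching and the resulting directed structure (sources, sinks, terminal vs.\ internal colors), and it is not clear that it survives when matching is by unordered pairs; you flag this as ``the main obstacle'' but do not resolve it. The paper's triangular decomposition is precisely the device that sidesteps this difficulty: it fixes, for each summand, a deterministic orientation pattern $d$, restoring the ordered-pair setting and allowing the asymmetric theorem to be applied without modification.
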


\begin{lemma} \label{lem:symm-mean}
    Suppose that $\mW \in \R^{n\times n}$ is a symmetric matrix with independent entries $(W_{ij})_{1\leq i\leq j\leq n}$ satisfying Assumption~\ref{assump:1}. 
    Let $\Poff{\mA}$ denote the operator that sets the diagonal entries of $\mA$ to $0$.
    Then for any fixed unit vectors $\vx, \vy \in \R^n$, 
    \begin{equation}\label{eq:Poff-k}
        \E \vx^\top \Poff{\mW^k} \vy = 0.
    \end{equation}
    Further, if $k$ is odd,
    \begin{equation}\label{eq:on-diag:odd}
        \E \vx^\top \mW^k \vy = 0, 
    \end{equation}
    and if $k$ is even,
    \begin{equation}\label{eq:on-diag}
        \left|\E \vx^\top \mW^k \vy\right|  \lesssim C_{k/2} \left(\sigma^2 n\right)^{k/2},
    \end{equation}
    where
    \begin{equation*}
        C_{k / 2} =\frac{k!}{(k / 2+1)!(k / 2)!}
    \end{equation*}
    is the $(k/2)$-th Catalan number. 
    In addition, if $\mW$ has i.i.d.~on-diagonal and i.i.d.~off-diagonal entries, then
    \begin{equation} \label{eq:iid-k-moment}
        \E \vx^\top \mW^k \vy = \frac{\vx^\top\vy}{n}  \E \tr\left(\mW^k\right). 
    \end{equation}
\end{lemma}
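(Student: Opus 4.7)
The plan is to expand the bilinear form as a sum over walks on $[n]$ and exploit the symmetry of the entries of $\mW$ (Assumption~\ref{assump:1}, Condition~\ref{assump:symm}). Writing
\[
\E\, \vx^\top \mW^k \vy \;=\; \sum_{i_0, i_1, \ldots, i_k} x_{i_0}\, y_{i_k}\, \E \prod_{s=0}^{k-1} W_{i_s i_{s+1}},
\]
independence and the symmetry of each $W_{ab}$ about zero force the inner expectation to vanish unless every unordered edge $\{a, b\}$ appears an \emph{even} number of times in the walk $i_0 \to i_1 \to \cdots \to i_k$.

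For Equations~\eqref{eq:Poff-k} and~\eqref{eq:on-diag:odd}, I would use a vertex-degree parity argument: in the multigraph induced by the walk, vertex $v$ has degree $2 c_v - \indicator[v = i_0] - \indicator[v = i_k]$, where $c_v$ counts occurrences of $v$ in $(i_0, i_1, \ldots, i_k)$. If every edge has even multiplicity, every vertex degree is even, forcing $i_0 = i_k$; this proves Equation~\eqref{eq:Poff-k}. Equation~\eqref{eq:on-diag:odd} then follows because the sum of edge multiplicities in a walk of length $k$ equals $k$, so if $k$ is odd some multiplicity must be odd and the expectation vanishes.

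For Equation~\eqref{eq:on-diag}, I would use Equation~\eqref{eq:Poff-k} to reduce to $\E\, \vx^\top \mW^k \vy = \sum_i x_i y_i\, \E [(\mW^k)_{ii}]$, then bound the diagonal moments via a Wigner-type walk-counting argument. The dominant contribution to $\E [(\mW^k)_{ii}]$ comes from \emph{Dyck walks}, i.e., closed walks from $i$ that correspond to depth-first traversals of rooted plane trees with $k/2$ edges in which each edge is used exactly twice. The Catalan correspondence bounds the number of such walks starting at $i$ by $C_{k/2}(n-1)(n-2) \cdots (n-k/2) \le C_{k/2}\, n^{k/2}$, and each contributes at most $\sigma^k$, yielding the target $C_{k/2}(\sigma^2 n)^{k/2}$. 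Walks in which some edge has multiplicity at least $4$ use at most $k/2 - 1$ distinct edges, and hence at most $k/2$ distinct vertices, so they contribute lower-order terms under Assumption~\ref{assump:1}. Applying Cauchy--Schwarz via $|\sum_i x_i y_i a_i| \le \|\vx\|_2 \|\vy\|_2 \max_i |a_i| = \max_i |a_i|$ then completes Equation~\eqref{eq:on-diag}. For Equation~\eqref{eq:iid-k-moment}, exchangeability of the vertex labels under the i.i.d.\ assumption makes $\E [(\mW^k)_{ii}]$ the same for all $i$, so each diagonal equals $n^{-1} \E \tr(\mW^k)$, yielding the stated identity after multiplying by $\sum_i x_i y_i = \vx^\top \vy$.

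The hardest part will be the non-asymptotic bookkeeping in Equation~\eqref{eq:on-diag}: enumerating non-Dyck closed walks classified by their edge-multiplicity profiles and tree-like skeletons, and verifying that their total contribution is absorbed into the Catalan-leading term. This is essentially the non-asymptotic analogue of the classical Wigner moment calculation, which requires tracking the interaction between each multiplicity profile and the number of distinct vertex labelings it admits.
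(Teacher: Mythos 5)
Your proposal is correct and follows essentially the same route as the paper: reduce to the diagonal terms, use a parity argument to kill off-diagonal and odd-$k$ contributions, bound the even-$k$ diagonal moments by a Wigner-style Dyck-walk count, and use exchangeability for the i.i.d.\ identity. Two small remarks on how the presentations differ. First, for Equation~\eqref{eq:Poff-k} the paper argues by tracking the last occurrence of the starting index in the product, whereas you invoke the standard multigraph degree-parity argument (degree of $v$ is $2c_v - \indicator[v=i_0] - \indicator[v=i_k]$, and all-even edge multiplicities force all-even degrees, hence $i_0 = i_k$); both are the same idea, but your phrasing is cleaner and is what the paper's reasoning actually amounts to. Second, for Equation~\eqref{eq:on-diag} the paper simply cites the moment-method bound in \cite{tao2012topics} (Theorem~2.3.16 and Remark~2.3.17) rather than redoing the Dyck-walk enumeration you sketch; your sketch is the content of that cited result. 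One caution on the ``lower-order'' claim for non-Dyck walks: a walk with a repeated edge of multiplicity $m_e \ge 4$ contributes a factor up to $\sigma^2 B^{m_e - 2}$ rather than $\sigma^{m_e}$, so absorbing these terms into the Catalan-leading bound requires something like $B \lesssim \sigma\sqrt{n}$ (which is supplied by Equation~\eqref{eq:B-sigma-bound} in the rest of the paper, though the lemma statement itself cites only Assumption~\ref{assump:1}); this is a shared imprecision with the paper, not a defect specific to your argument, but the bookkeeping you flag as ``the hardest part'' is precisely where this condition enters.
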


\begin{remark} \label{rem:symm}
    Combining Equation~\eqref{eq:symm-high-prob-bound} with Equation~\eqref{eq:on-diag}, one sees that the dependence of $\vx^\top \mW^k \vy$ on $\|\vx\|_{\infty}$ and $\|\vy\|_{\infty}$ is ignorable when $B/\sigma$ is not too large. 
    Thus, we would expect various coherence-free results also to hold under a symmetric noise setting.
    One can find some direct applications of Theorem~\ref{thm:symm-concentration} in \cite{fan2022asymptotic, xie2024higherorderentrywise}.
    Nevertheless, noting that $\E\vx^\top \mW^k \vy$ in Equation~\eqref{eq:on-diag} dominates the variation $\vx^\top \mW^k \vy - \E\vx^\top \mW^k \vy$ in Equation~\eqref{eq:symm-high-prob-bound}, it is evident that the challenge under symmetric noise lies mostly in reducing the bias of $\E\vx^\top \mW^k \vy$, which has minimal dependence on coherence.
    It is worth mentioning that the bias of $\vx^\top \mW^k \vy$ comes mainly from the diagonal of $\mW^k$ given in Equation~\eqref{eq:Poff-k}.
    Thus, reducing the large bias in the symmetric noise case might be possible through some higher-order diagonal deletion algorithm, which is commonly employed for $k = 2$ \citep{zhang2022heteroskedastic, zhou2023deflatedheteropcaovercomingcurse, xie2024biascorrected}. 
    Under stronger assumptions, such as $\mW$ having i.i.d.\! entries, the bias can also be estimated and subsequently removed.
\end{remark}

\begin{remark} \label{rem:limit-law}
    Thanks to the study of the semicircle law \citep{tao2012topics, bai2010spectral}, there are numerous classical results concerning the moments of $\mW$, as moment methods are commonly used to establish these results. 
    The results in Lemma~\ref{lem:symm-mean} are obtained straightforwardly from these classical results.
    In contrast, fewer results on the moments of $\mH$ exist \cite[though see][for one such recent result in this direction]{byun2024spectral}.
    This is likely because the limiting spectral distribution of $\mH$, which also follows the circular law \citep{bai1997circular}, cannot be derived using the moment method. 
    For more details, see Section 2.8 of \cite{tao2012topics}.
\end{remark}

\section{APPLICATIONS} \label{sec:applications}
Since $\vx^\top \mH^k \vy$ is a key quantity in establishing many technical results, our new results in Theorem~\ref{thm:main} have a range of applications. 
This section highlights a few of them in the context of asymmetric noise. As noted in Remark~\ref{rem:symm}, our results also apply to symmetric noise, but we leave these applications to future work.

\subsection{Rank-one perturbation analysis}
Our new concentration result in Theorem~\ref{thm:main} directly improves upon Theorem 3 in \cite{chen2021asymmetry}.
\begin{theorem}\label{thm:linear-form}
    Consider a rank-one symmetric matrix $\mMstar = \lambdastar \vustar \vustart \in \R^{n\times n}$ with incoherence parameter $\mu$. 
    Suppose the noise matrix $\mH$ obeys Assumption~\ref{assump:1}, and let $C_1 > 0$ be a constant obeying
    \begin{equation} \label{eq:lambdastar:assump}
        |\lambdastar| \geq C_1 \max \left\{\sigma \sqrt{n \log^3 n}, B \log^3 n\right\}.
    \end{equation}
    Let 
    \begin{equation} \label{eq:k0-define}
        k_0 = \left\lceil\frac{\log \mu + 2\log \|\va\|_{\infty}}{2\log \left(\sigma/B\right) +  \log n - 3\log \log n}\right\rceil.
    \end{equation}
    Then for any fixed vector $\va \in \R^n$ with $\|\va\|_2 = 1$, with probability at least $1 - O(n^{-20})$ one has
    \begin{equation} \label{eq:master:rank-one}
    \begin{aligned}
        &\left|\va^\top \left(\vu - \frac{\vustart \vu}{\lambda / \lambdastar} \vustar\right)\right| \lesssim \frac{\log^2 n}{\sqrt{n}}\times\\
        &\max\left\{\! \frac{B \|\va\|_{\infty} \mu^{1/2} \log^3\! n}{\left|\lambdastar\right|}, \left(\frac{\sigma \sqrt{n} \log^{3/2} n}{\left|\lambdastar\right|}\right)^{\!k_0} \right\}. 
    \end{aligned}
    \end{equation}
\end{theorem}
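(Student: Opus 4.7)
The plan is to begin from the rank-one Neumann expansion in Equation~\eqref{eq:neumann:rank-1} and separate the $k=0$ summand, which gives
\[
\vu - \frac{\vustart \vu}{\lambda/\lambdastar}\vustar \;=\; \frac{\lambdastar(\vustart\vu)}{\lambda}\sum_{k=1}^{\infty}\frac{\mH^k \vustar}{\lambda^k}.
\]
The expansion is valid because Lemma~\ref{lem:W-op} together with the lower bound on $|\lambdastar|$ in Equation~\eqref{eq:lambdastar:assump} gives $\|\mH\|/|\lambdastar| \lesssim 1/(C_1 \log n) \le 1/2$ for $n$ large, after which Weyl's inequality yields $|\lambda|\ge |\lambdastar|/2$ and the outer prefactor $|\lambdastar(\vustart\vu)/\lambda|$ is bounded by an absolute constant. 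Taking inner product with $\va$ reduces the task to controlling $\sum_{k\ge 1} |\va^\top \mH^k \vustar|/|\lambda|^k$.

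I would then bound the individual terms in three ranges. For $k=1$, Lemma~\ref{lem:xHy-bound} furnishes a Bernstein-type estimate that is sharper than the $k=1$ case of Equation~\eqref{eq:xHky-bound}. For $2\le k\le 20\log n$, I apply Remark~\ref{rem:xHky-bound} (Equation~\eqref{eq:xHky-bound}) with $\vy=\vustar$ and $\|\vustar\|_\infty=\sqrt{\mu/n}$, giving
\[
\frac{|\va^\top \mH^k\vustar|}{|\lambda|^k} \;\lesssim\; (\log n)^2 \max\!\left\{\frac{\alpha^k}{\sqrt{n}},\; \beta^k \|\va\|_\infty \sqrt{\mu/n}\right\},
\]
where $\alpha=c_2\sigma\sqrt{n\log^3 n}/|\lambdastar|$ and $\beta=c_2 B\log^3 n/|\lambdastar|$ are both strictly less than $1$ under Equation~\eqref{eq:lambdastar:assump}. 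For $k>20\log n$, the deterministic bound $|\va^\top \mH^k\vustar|\le \|\mH\|^k$ combined with Lemma~\ref{lem:W-op} shows the corresponding tail decays at geometric rate $O(1/\log n)$ per step and is super-polynomially small.

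The crucial step is to split the remaining range $1\le k\le 20\log n$ at $k_0$. The dense term $\alpha^k/\sqrt{n}$ dominates the sparse term $\beta^k\|\va\|_\infty\sqrt{\mu/n}$ precisely when $(\alpha/\beta)^k\ge \|\va\|_\infty\sqrt{\mu}$; since $\alpha/\beta = \sigma\sqrt{n}/(B\log^{3/2}n)$, taking logs and multiplying by $2$ yields exactly the crossover index $k_0$ in Equation~\eqref{eq:k0-define}. Using the dense bound for $k\ge k_0$ and the sparse bound for $k<k_0$, each geometric series is dominated by its leading term:
\[
\sum_{k\ge k_0}\frac{\alpha^k}{\sqrt{n}}\;\lesssim\; \frac{\alpha^{k_0}}{\sqrt{n}},\qquad \sum_{k=1}^{k_0-1}\beta^k \|\va\|_\infty \sqrt{\mu/n} \;\lesssim\; \beta\|\va\|_\infty \sqrt{\mu/n}.
\]
Reinserting the $(\log n)^2$ factor and the bounded Neumann prefactor, and absorbing $c_2$ into the universal constants, recovers the two terms in Equation~\eqref{eq:master:rank-one}.

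The main obstacle is bookkeeping rather than any single hard estimate: verifying that $k_0$ is exactly the crossover index, then tracking powers of $\log n$ through the geometric sums so that what appears outside the max is precisely $(\log n)^2/\sqrt{n}$. One minor subtlety is the edge case where the numerator of Equation~\eqref{eq:k0-define} is nonpositive; the sparse regime is then empty, only the first term in the max of Equation~\eqref{eq:master:rank-one} contributes, and the dense tail $\alpha^{k_0}/\sqrt{n}$ is absorbed into the first max-term.
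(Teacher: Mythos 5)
Your proposal follows the paper's proof essentially step for step: Neumann expansion with the $k=0$ term peeled off, the same sparse/dense dichotomy from Equation~\eqref{eq:xHky-bound} with crossover at exactly the integer $k_0$, geometric-series domination by the leading terms on each side of $k_0$, and separate treatment of the tail $k > 20\log n$ via Lemma~\ref{lem:W-op}. The only correction needed is that Weyl's inequality does not apply here since $\mM = \mMstar + \mH$ is not symmetric; the paper instead invokes the Bauer--Fike-type Lemma~\ref{lem:lambda-asymm} to conclude $1/2 \le |\lambda/\lambdastar| \le 2$, after which everything you wrote goes through unchanged.
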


\begin{remark} \label{rem:coherence-free}
A similar bound appears as Equation (15) in \cite{chen2021asymmetry}, which serves as a master bound for deriving bounds for eigenvalues and linear forms of eigenvectors.
The most important difference between Equation~\eqref{eq:master:rank-one} and their result is that we remove the dependence on $\mu$ for the term involving $\sigma$. 
An immediate result is that if $B = \Otilde(\sigma)$, then we have
\begin{equation} \label{eq:coherence-free-master-rank-one}
    \left|\va^\top \left(\vu - \frac{\vustart \vu}{\lambda / \lambdastar} \vustar\right)\right| \lesssim \frac{\sigma \log^{5} n}{\left|\lambdastar\right|}. 
\end{equation}
The condition $B = \Otilde(\sigma)$ is satisfied by a wide range of distributions such as the Gaussian distribution and any distribution whose standard deviation is comparable to its Orlicz $1$-norm. 
Notably, this coherence-free result has not been proven previously. 
More generally, the bound in Equation~\eqref{eq:coherence-free-master-rank-one} also holds if one has $B \|\va\|_{\infty} \lesssim \sigma \sqrt{n/\mu}$. 
While our removal of the dependence on $\mu$ introduces additional logarithmic factors, we conjecture that these can be eliminated through a more refined analysis.
As a final remark, since the results in \cite{chen2021asymmetry} also hold under our setting, one can take the minimum of the two bounds to further reduce some logarithmic factors when the coherence $\mu$ is small. 
For clarity of presentation, we omit these refinements in our stated results.
\end{remark}

\subsubsection{Eigenvalue perturbation: Rank-one}
Theorem~\ref{thm:linear-form} yields an improved perturbation bound regarding the leading eigenvalue $\lambda$ of $\mM$.
\begin{corollary}\label{cor:eigenvalue:one}
    Under the assumptions of Theorem~\ref{thm:linear-form}, with probability at least $1 - O(n^{-20})$ we have
    \begin{equation} \label{eq:eigenvalue:one}
    \begin{aligned}
        &\left|\lambda - \lambdastar\right| \lesssim \\
        &\max\!\left\{\! \frac{B\mu \log^5\! n}{n}, \! \left(\frac{\sigma \sqrt{n} \log^{3/2}\! n}{\left|\lambdastar\right|}\right)^{\!k_0} \! \frac{|\lambdastar|\log^2 \! n}{\sqrt{n}}\! \right\}.
    \end{aligned}
    \end{equation}
\end{corollary}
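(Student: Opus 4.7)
The plan is to apply Theorem~\ref{thm:linear-form} with the test vector $\va = \vustar$ and then interpret its left-hand side as a rescaled version of $\lambda - \lambdastar$. With this choice, and using $\vustart \vustar = 1$, the quantity inside the absolute value on the left of Equation~\eqref{eq:master:rank-one} reduces to
\begin{equation*}
\vustart \vu \;-\; \frac{\vustart \vu}{\lambda / \lambdastar}\,\vustart \vustar \;=\; (\vustart \vu)\!\left( 1 - \frac{\lambdastar}{\lambda} \right) \;=\; \frac{(\vustart \vu)(\lambda - \lambdastar)}{\lambda}.
\end{equation*}
Substituting $\|\vustar\|_\infty = \sqrt{\mu/n}$ into the right-hand side of the master bound and simplifying, Theorem~\ref{thm:linear-form} yields
\begin{equation*}
\frac{|\vustart \vu|\,|\lambda - \lambdastar|}{|\lambda|} \;\lesssim\; \max\!\left\{ \frac{B \mu \log^5 n}{|\lambdastar|\, n},\; \frac{\log^2 n}{\sqrt{n}}\!\left( \frac{\sigma \sqrt{n} \log^{3/2} n}{|\lambdastar|} \right)^{\!k_0} \right\}
\end{equation*}
on the high-probability event on which Theorem~\ref{thm:linear-form} holds.

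It then suffices to show that $|\lambda|/|\vustart \vu| \lesssim |\lambdastar|$ on a (possibly slightly smaller) high-probability event; multiplying the displayed inequality above by this quantity immediately produces Equation~\eqref{eq:eigenvalue:one}. For the numerator, the signal-strength assumption~\eqref{eq:lambdastar:assump} combined with Lemma~\ref{lem:W-op} ensures $\|\mH\| \le |\lambdastar|/2$ provided $C_1$ is taken sufficiently large, so Weyl's inequality gives $|\lambda| \le (3/2)|\lambdastar|$. For the denominator, the same spectral gap condition allows an application of the Davis--Kahan $\sin\Theta$ theorem, giving $\sqrt{1 - (\vustart \vu)^2} \lesssim \|\mH\|/|\lambdastar| \le 1/2$, whence $|\vustart \vu| \ge 1/2$; equivalently, one can take the $\ell_2$ norm of both sides of the Neumann expansion in Equation~\eqref{eq:neumann:rank-1}, use $\|\vu\|_2 = 1$, and bound the power series $\sum_{k \ge 0} \lambda^{-k} \mH^k \vustar$ by a geometric series in $\|\mH\|/|\lambda| \le 1/2$ to reach the same conclusion.

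Combining these two estimates produces $|\lambda|/|\vustart \vu| \le 3 |\lambdastar|$, and multiplying through yields Equation~\eqref{eq:eigenvalue:one} on the intersection of the events arising in Theorem~\ref{thm:linear-form} and Lemma~\ref{lem:W-op}, which still has probability $1 - O(n^{-20})$. The argument is essentially a direct specialization and algebraic rearrangement of Theorem~\ref{thm:linear-form}; the only step requiring care is the lower bound on $|\vustart \vu|$, but this is a standard consequence of Assumption~\eqref{eq:lambdastar:assump} and Lemma~\ref{lem:W-op}, so no substantive obstacle arises once the master bound of Theorem~\ref{thm:linear-form} is in hand.
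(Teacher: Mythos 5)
Your argument is correct and follows the same route as the paper: set $\va = \vustar$ in Theorem~\ref{thm:linear-form}, recognize the resulting left-hand side as $|\vustart\vu|\,|\lambda - \lambdastar|/|\lambda|$, substitute $\|\vustar\|_\infty = \sqrt{\mu/n}$, and then multiply through by $|\lambda|/|\vustart\vu| \lesssim |\lambdastar|$ using high-probability control of $|\lambda|$ and $|\vustart\vu|$.

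One caveat worth flagging: because $\mH$ is \emph{asymmetric}, neither Weyl's inequality nor the Davis--Kahan $\sin\Theta$ theorem applies directly — both are statements for Hermitian perturbations, and the paper itself makes this distinction in Remark~\ref{rem:challenge:asymm}. The correct tools here are the Bauer--Fike-type bound in Lemma~\ref{lem:lambda-asymm} (for $|\lambda| \lesssim |\lambdastar|$; this is also how Equation~\eqref{eq:lambda-bound} is established) and Lemma~\ref{lem:inner-product} (for $|\vustart\vu| \gtrsim 1$). That said, the alternative argument you offer — taking $\ell_2$ norms across the Neumann expansion in Equation~\eqref{eq:neumann:rank-1} and summing the geometric series — is valid for the asymmetric case and does deliver $|\vustart\vu| \gtrsim 1$, so the conclusion stands once the citations are fixed. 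The paper uses Lemma~\ref{lem:inner-product} for this step; your Neumann-expansion alternative is an equally short route to the same lower bound.
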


\begin{remark}
    Under the setting where $B = \Otilde(\sigma)$, Equation~\eqref{eq:eigenvalue:one} becomes
    \begin{equation} \label{eq:coherence-free-eigenvalue}
        \left|\lambda - \lambdastar\right| \lesssim \sigma \log^{5} n,
    \end{equation}
    which matches the minimax lower bound in Lemma 4 of \cite{chen2021asymmetry} up to logarithmic factors when $\mH$ has entrywise i.i.d.~standard Gaussian distribution.
    More generally, Equation~\eqref{eq:coherence-free-eigenvalue} still holds under $B \lesssim \sigma n/\mu$, which implies that for a wide range of noise distributions satisfying condition~\eqref{eq:B-sigma-bound}, the leading eigenvalue of $\mMstar$ can be estimated accurately in the asymmetric case when $\mu = O(\sqrt{n})$.
\end{remark}

\subsection{Rank-\texorpdfstring{$r$}{r} perturbation analysis}
Assume that the $r$ non-zero eigenvalues of $\mMstar$ obey $\lambdastar_{\max} = |\lambdastar_1| \geq |\lambdastar_2| \geq \cdots \geq |\lambdastar_r| = \lambdastar_{\min}$. 
Theorem~\ref{thm:linear-form-rank-r} extends Theorem~\ref{thm:linear-form} to the rank-$r$ case.

\begin{theorem}\label{thm:linear-form-rank-r}
 Consider a rank-$r$ symmetric matrix $\mMstar \in \R^{n\times n}$ with incoherence parameter $\mu$. 
 Define $\kappa := \lambdastar_{\max} / \lambdastar_{\min}$. 
 Suppose the noise matrix $\mH$ obeys Assumption~\ref{assump:1}, and assume the existence of some sufficiently large constant $C_2 \geq 0$ such that
 \begin{equation} \label{eq:lambdastar-max}
     \frac{ \lambdastar_{\max} }{ \kappa }
     \geq C_2 \max \left\{\sigma \sqrt{n \log^3 n}, B \log^3 n\right\} .
 \end{equation}
 Then for any fixed unit vector $\va\in\R^n$ and for any $l \in [r]$, with probability at least $1 - O(r n^{-20})$, one has
 \begin{equation} \label{eq:master:rank-r}
    \begin{aligned}
        &\left|\va^\top \vu_l - \sum_{j=1}^r \frac{\lambdastar_j \vustart_j \vu_l}{\lambda_l} \va^\top \vustar\right| \lesssim \sqrt{\frac{\kappa^2 r \log^4 n}{n}}\times\\
        &
        \max\Bigg\{\frac{\mu^{\frac{1}{2}} B \log^3 n}{\lambdastar_{\min}} \|\va\|_{\infty}, \left(\frac{\sigma \sqrt{n \log ^3 n}}{\lambdastar_{\min}}\right)^{k_0} \Bigg\}.
    \end{aligned}
    \end{equation}
\end{theorem}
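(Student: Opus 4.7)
The plan is to adapt the rank-one argument of Theorem~\ref{thm:linear-form} by applying the rank-$r$ Neumann expansion to $\vu_l$ and then controlling, for each signal direction $\vustar_j$, the resulting infinite sum of the form $\sum_{k\ge 1} \va^\top \mH^k \vustar_j / \lambda_l^k$ using our concentration results, before combining the contributions across $j$ by Cauchy--Schwarz.

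First, I would verify that $\|\mH\| \le |\lambda_l|$ holds with high probability: Equation~\eqref{eq:lambdastar-max} together with Lemma~\ref{lem:W-op} yields $\|\mH\| \le \lambdastar_{\min}/2$, and hence by Weyl's inequality $|\lambda_l| \ge |\lambdastar_l| - \|\mH\| \ge \lambdastar_{\min}/2$ for every $l \in [r]$. On this event, the Neumann trick Equation~\eqref{eq:neumann:rank-r}, contracted with $\va$ and with the $k=0$ contribution separated, gives
\begin{equation*}
    \va^\top \vu_l - \sum_{j=1}^r \frac{\lambdastar_j \vustart_j \vu_l}{\lambda_l} \va^\top \vustar_j
    = \sum_{j=1}^r \frac{\lambdastar_j (\vustart_j \vu_l)}{\lambda_l} \sum_{k=1}^\infty \frac{\va^\top \mH^k \vustar_j}{\lambda_l^k}.
\end{equation*}
The prefactor admits the uniform bound $|\lambdastar_j / \lambda_l| \le 2\kappa$, and the definition of incoherence gives $\|\vustar_j\|_\infty \le \sqrt{\mu/n}$ uniformly in $j$.

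The core step is to establish, for each fixed $j \in [r]$, a uniform high-probability bound $|S_j| \lesssim \Phi$ on $S_j := \sum_{k=1}^\infty \va^\top \mH^k \vustar_j / \lambda_l^k$, where $\Phi$ equals the right-hand side of Equation~\eqref{eq:master:rank-r} divided by $\kappa\sqrt{r}$. I would split the sum into three ranges. For $k=1$, Lemma~\ref{lem:xHy-bound} supplies the required Bernstein bound. For $2 \le k \le k_0$, I would apply Equation~\eqref{eq:xHky-bound} from Remark~\ref{rem:xHky-bound} with $\vy = \vustar_j$ and insert $\|\vustar_j\|_\infty \le \sqrt{\mu/n}$; the definition of $k_0$ in Equation~\eqref{eq:k0-define} is exactly what is needed to ensure that the $B$-branch is geometric and is dominated (up to $O(\log n)$ multiplicative overhead) by its value at $k = k_0$, which in turn matches the $\sigma$-branch evaluated at $k_0$. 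For $k > k_0$, the crude bound $|\va^\top \mH^k \vustar_j| \le \|\mH\|^k$ combined with Lemma~\ref{lem:W-op} and $|\lambda_l| \ge \lambdastar_{\min}/2$ gives a convergent geometric tail controlled by $(\|\mH\|/|\lambda_l|)^{k_0+1}$, which collapses to the $\sigma$-branch of the max in Equation~\eqref{eq:master:rank-r}. A union bound over $j \in [r]$ and over $k \in \{1,\dots,\lceil 20\log n\rceil\}$ yields the stated probability $1 - O(rn^{-20})$.

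Finally, applying Cauchy--Schwarz together with orthonormality of $\mUstar$,
\begin{equation*}
    \sum_{j=1}^r |\vustart_j \vu_l| \le \sqrt{r}\, \|\mUstar{}^\top \vu_l\|_2 \le \sqrt{r},
\end{equation*}
so the full sum is bounded by $2\kappa \cdot \sqrt{r} \cdot \Phi$, which is precisely Equation~\eqref{eq:master:rank-r}. The main obstacle is not really in the rank-$r$ bookkeeping but inherited from the rank-one case: keeping coherence attached \emph{only} to the $B$-branch while telescoping the middle-range partial sum $\sum_{k=2}^{k_0}(\cdot)^k$ into its endpoint contribution requires the refined moment bounds of Theorem~\ref{thm:main} and the sharp definition of $k_0$. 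Once those are in hand, the extension from rank one to rank $r$ reduces to the uniform prefactor bound $|\lambdastar_j/\lambda_l| \le 2\kappa$ and the Cauchy--Schwarz step above, which together account for the extra $\kappa\sqrt{r}$ factor in Equation~\eqref{eq:master:rank-r} relative to the rank-one master bound.
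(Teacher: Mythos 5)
Your overall plan — Neumann expansion, Cauchy--Schwarz to turn $\sum_j |\vustart_j \vu_l|$ into $\sqrt{r}$, splitting the $k$-sum and applying the refined concentration bounds — matches the paper's proof. However, several steps contain genuine errors or gaps.

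\paragraph{Weyl's inequality does not apply.} You invoke Weyl's inequality to conclude $|\lambda_l| \ge |\lambdastar_l| - \|\mH\|$, but $\mM = \mMstar + \mH$ is not symmetric, and Weyl's inequality is a theorem about Hermitian perturbations. The paper explicitly flags this in Remark~\ref{rem:challenge:asymm}. The correct tool is Lemma~\ref{lem:lambda-asymm} (a Bauer--Fike-type result for asymmetric perturbations), which gives only that for each $l$ there \emph{exists} some $j \in [r]$ with $|\lambda_l - \lambdastar_j| \le \|\mH\|$; this still yields $|\lambda_l| \ge \lambdastar_{\min} - \|\mH\|$, but the eigenvalue pairing is genuinely weaker than Weyl's, and the proof must cite the correct lemma.

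\paragraph{The geometric direction of the $B$-branch is reversed.} You claim the $B$-branch sum over $2 \le k \le k_0$ is dominated ``by its value at $k = k_0$, which in turn matches the $\sigma$-branch evaluated at $k_0$.'' The ratio of the geometric series is $c_2 B\log^3 n / |\lambda_l|$, which condition~\eqref{eq:lambdastar-max} forces to be strictly less than $1$; the series therefore decays, and the sum is dominated by its \emph{smallest} index ($k=2$, or $k=1$ in the paper's version). By the definition of $k_0$, the $B$-branch at $k=2 < k_0$ is at least the $\sigma$-branch at $k=2$, which is in turn at least the $\sigma$-branch at $k_0$; so your claimed bound strictly underestimates the sum. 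The $B$-term $\mu^{1/2}B\log^3 n\,\|\va\|_\infty/\lambdastar_{\min}$ appearing in the theorem's max is precisely the leading ($k=1$, or $k=2$) term of the $B$-branch, not its $k_0$-th term.

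\paragraph{The crude spectral-norm bound is too loose for $k_0 < k \le 20\log n$.} You propose using $|\va^\top\mH^k\vustar_j| \le \|\mH\|^k$ for all $k > k_0$. With $\|\mH\| \lesssim \sigma\sqrt{n\log n}$, this gives $(\|\mH\|/|\lambda_l|)^{k_0+1} \approx (\sigma\sqrt{n\log n}/\lambdastar_{\min})^{k_0+1}$, whereas the target $\sigma$-branch in Equation~\eqref{eq:master:rank-r} carries an extra factor of $1/\sqrt{n}$, coming from the $\sqrt{1/n}$ in Equation~\eqref{eq:xHky-bound}. Since $k_0 = O(1)$, the ratio of the crude bound to the target is of order $\sqrt{n}/(\log n)^{O(1)}$, which is unbounded. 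The crude bound is only used for $k > 20\log n$, where the exponent itself delivers a factor of $n^{-10}$ (as in the displayed Equation~\eqref{eq:high-order-geometric} of the rank-one proof); for $k_0 < k \le 20\log n$ you still need the refined bound~\eqref{eq:xHky-bound}, and the resulting geometric sum is dominated by its $k = k_0 + 1$ term.

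The Cauchy--Schwarz step and the $2\kappa$ prefactor bound are correct and coincide with what the paper does.
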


\begin{remark}\label{rem:eigenvector}
    Similar to the rank-one case in Theorem~\ref{thm:linear-form}, we remove the dependence on $\mu$ from the term involving $\sigma$. 
    The dependence on $\kappa$ is likely an artifact of the proof technique, and we do not aim to optimize this dependence in the current analysis.
    Using Theorem~\ref{thm:linear-form-rank-r}, one can establish results for entrywise bounds and linear forms of eigenvectors when $\mMstar$ is rank $r$. 
    We expect this approach to improve the bounds in Theorem 1 of \cite{cheng2021tackling} and provide coherence-free bounds under certain conditions, but we leave this to future work, since eigenvector estimation involves an additional debiasing procedure \citep[see Section 3.2 of][]{cheng2021tackling}, which incurs additional proof complexity.
\end{remark}

\subsubsection{Eigenvalue perturbation: Rank-$r$}

As a final application, we improve Theorem 2 in \cite{cheng2021tackling}. 
Unlike the rank-one setting, the eigen-gaps play an important role in eigenvalue estimation for the rank-$r$ case. 
Formally speaking, the eigen-gap with respect to the $l$-th eigenvalue of $\mMstar$ is defined as
\begin{equation} \label{eq:eigen-gap:define}
    \Delta_l^{\star}:= \begin{cases}
    \min _{1 \leq k \leq r, k \neq l}\left|\lambdastar_l -\lambdastar_k\right|, & \text { if } r>1 \\ \infty, & \text { otherwise }.\end{cases}
\end{equation}
Combining Theorem~\ref{thm:main} with tools from random matrix theory, we obtain Theorem~\ref{thm:eigenvalue:rank-r}.
\begin{theorem}\label{thm:eigenvalue:rank-r}
    Suppose that the noise parameters defined in Assumption~\ref{assump:1} satisfy
    \begin{equation} \label{eq:gap-condition}
    \begin{aligned}
        &\Deltastar_l \! \geq \!
        c_3 r^2 \! \max\!\left\{\! \frac{\kappa \mu B\! \log ^3\! n}{n},\! \frac{\left(\! \kappa \sigma \sqrt{n \log ^3 n}\right)^{k_0}}{\left(\lambdastar_{\max }\right)^{k_0-1}\sqrt{n}} \! \right\}
    \end{aligned}
    \end{equation}
    and $\lambdastar_{\max}$ satisfies condition~\eqref{eq:lambdastar-max} for some sufficiently large constants $c_3 > 0$ and $C_2 > 0$. 
    Then given any $l \in [r]$, with probability $1 - O(r^2 n^{-20})$, the eigenvalue $\lambda_l$ is real-valued, and
    $|\lambda_l - \lambdastar_l|$ is upper bounded by
    \begin{equation}\label{eq:eigenvalue-rank-r}
    \begin{aligned}
        c_3 r^2\max\Bigg\{\frac{\kappa \mu B \log ^3 n}{n},
        \frac{\left(\kappa \sigma \sqrt{n \log ^3 n}\right)^{k_0}}{\left(\lambdastar_{\max }\right)^{k_0-1}\sqrt{n}} \Bigg\}.
    \end{aligned}
    \end{equation}
\end{theorem}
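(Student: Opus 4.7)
The plan is to combine the rank-$r$ Neumann expansion from Equation~\eqref{eq:neumann:rank-r} with the sharp concentration bound of Remark~\ref{rem:xHky-bound} and a Rouché-type argument on a suitable determinant. Starting from $\mM \vu_l = \lambda_l \vu_l$ and substituting the Neumann expansion, I would take inner products with each $\vustar_i$, $i\in[r]$, to obtain the linear system $\va_l = \mS(\lambda_l)\va_l$, where $\va_l := (\vustart_1\vu_l,\dots,\vustart_r\vu_l)^\top$ and
\begin{equation*}
S_{ij}(z) := \frac{\lambdastar_j}{z}\sum_{k=0}^\infty \frac{\vustart_i\mH^k\vustar_j}{z^k} = \delta_{ij}\frac{\lambdastar_j}{z} + \frac{\lambdastar_j}{z}\,\epsilon_{ij}(z),
\end{equation*}
with $\epsilon_{ij}(z) := \sum_{k\geq 1} \vustart_i\mH^k\vustar_j/z^k$. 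Nontriviality of $\va_l$ (which can be checked by noting that $|\vustart_l\vu_l|$ is bounded away from $0$ under condition~\eqref{eq:lambdastar-max}) forces the scalar equation $g(\lambda_l) := \det(\mI_r - \mS(\lambda_l)) = 0$, so the task reduces to locating the roots of $g$ near $\lambdastar_l$.

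The next step is to bound $|\epsilon_{ij}(z)|$ uniformly in $z$ on a small disk around $\lambdastar_l$ and over all $r^2$ pairs $(i,j)$. Since $\|\vustar_i\|_\infty,\|\vustar_j\|_\infty\leq\sqrt{\mu/n}$, Equation~\eqref{eq:xHky-bound} gives, for each $2\leq k\leq 20\log n$,
\begin{equation*}
\frac{|\vustart_i\mH^k\vustar_j|}{|z|^k} \lesssim c_2^k(\log n)^2\max\biggl\{\frac{1}{\sqrt{n}}\Bigl(\frac{\kappa\sigma\sqrt{n\log^3 n}}{\lambdastar_{\max}}\Bigr)^k,\;\frac{\mu}{n}\Bigl(\frac{\kappa B\log^3 n}{\lambdastar_{\max}}\Bigr)^k\biggr\},
\end{equation*}
while Lemma~\ref{lem:xHy-bound} handles $k=1$. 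The crossover index $k_0$ in~\eqref{eq:k0-define} is defined precisely so that the $B$-term dominates for $k<k_0$ and the $\sigma$-term for $k\geq k_0$; splitting the geometric series at $k_0$ and invoking condition~\eqref{eq:lambdastar-max} for convergence yields
\begin{equation*}
|\epsilon_{ij}(z)|\lesssim \max\biggl\{\frac{\kappa\mu B\log^5 n}{n\,\lambdastar_{\max}},\;\frac{(\kappa\sigma\sqrt{n\log^3 n})^{k_0}\log^2 n}{(\lambdastar_{\max})^{k_0}\sqrt{n}}\biggr\}.
\end{equation*}

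Finally, I would apply a Rouché argument to $g(z)$. Expanding via the Leibniz formula, $\det(\mI_r-\mS(z))$ equals the identity-permutation product $\prod_{j=1}^r(1 - \lambdastar_j/z - (\lambdastar_j/z)\epsilon_{jj}(z))$ plus a remainder $R(z)$ collecting the $r!-1$ non-identity permutations, each carrying at least two off-diagonal factors of order $\kappa|\epsilon_{ij}(z)|$. On the boundary of a disk centered at $\lambdastar_l$ whose radius matches the right-hand side of~\eqref{eq:eigenvalue-rank-r} divided by $|\lambdastar_l|$, every factor $|1 - \lambdastar_j/z|$ with $j\neq l$ is at least $\Deltastar_l/(2|\lambdastar_{\max}|)$ by the eigen-gap, while $|1-\lambdastar_l/z|$ equals the disk radius itself; condition~\eqref{eq:gap-condition} (with its $r^2$ factor absorbing both the Leibniz combinatorics and the $\kappa$ amplifications) then ensures $|R(z)|$ is dominated by the identity product on this boundary. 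Rouché's theorem delivers exactly one root of $g$ inside the disk, which must be $\lambda_l$. Reality of $\lambda_l$ follows because $g$ is real-analytic (the entries of $\mS(z)$ are real for real $z$, since $\mH$ is real), so its complex roots come in conjugate pairs; a single root inside a disk symmetric about the real axis must therefore be real. The main technical obstacle will be the bookkeeping in this last step, in particular verifying that the $r^2$ constant in~\eqref{eq:gap-condition} truly suffices to dominate both the $r!$ terms in the Leibniz expansion and the $\kappa$-amplifications coming from ratios $\lambdastar_j/\lambdastar_l$.
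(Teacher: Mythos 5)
Your approach is essentially the paper's: both reduce eigenvalue location to zero-finding for the $r \times r$ determinant $\det(\mI_r - \mS(z))$ (your $\mS(z)$ coincides with the paper's $\mUstart(\mH - z\mI)^{-1}\mUstar\mLambdastar$ up to sign, so your $g$ is exactly the paper's $f$), both control the matrix entries $\vustart_i \mH^k \vustar_j$ with the same concentration bound, split at the same $k_0$, and both close with Rouch\'e on $\gamma$-disks around the $\lambdastar_l$ and deduce reality via conjugate symmetry. The cosmetic differences are (a) you motivate the determinant via the Neumann expansion and a linear system in $\va_l$, while the paper uses the Sylvester identity $\det(\mI + \mA\mB) = \det(\mI + \mB\mA)$ to identify $f(z)$ with $\det(\mM - z\mI)/\det(\mH - z\mI)$; and (b) you propose bounding $|g - h|$ via the Leibniz expansion, while the paper factors $f(z) = g(z)\det(\mI - \mDelta(z))$ and checks $\|\mDelta\|_{\mathrm{op}} < 1/(2r)$, which tracks the $r$-dependence more cleanly than enumerating $r! - 1$ permutations (both ultimately yield the $r^2$ prefactor, but the operator-norm route is tidier).

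There is one genuine gap. Your Neumann derivation establishes only the forward implication: if $\lambda_l$ is an eigenvalue of $\mM$ with $|\lambda_l| > \|\mH\|$ and $\va_l \neq 0$, then $g(\lambda_l) = 0$. Rouch\'e tells you $g$ has exactly one root inside $\calB(\lambdastar_l, \gamma)$, but you cannot yet conclude ``which must be $\lambda_l$'': you have not shown that $\lambda_l$ lies in that disk (Bauer--Fike only places it within $\|\mH\|$ of some $\lambdastar_m$, and $\|\mH\|$ may far exceed $\gamma$), nor that $g$ has no spurious zeros on $\{|z| > \|\mH\|\}$ beyond the eigenvalues of $\mM$. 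The paper's Claim~\ref{claim:1}, proved via the Sylvester identity, supplies exactly the missing converse: zeros of $f$ on $\calK$ biject with the top-$r$ eigenvalues of $\mM$. You would either need to invoke this identity, or verify the converse directly by checking that any $z_0$ with $\det(\mI_r - \mS(z_0)) = 0$ and null vector $\va$ gives an eigenvector $\vu = \sum_j \frac{\lambdastar_j a_j}{z_0}\sum_{k\ge 0} z_0^{-k}\mH^k\vustar_j$ of $\mM$ with eigenvalue $z_0$. A secondary, minor slip: the radius of your disk should be taken to equal the right-hand side of Equation~\eqref{eq:eigenvalue-rank-r} itself (not that quantity divided by $|\lambdastar_l|$); what equals the radius divided by $|z|$ is $|1 - \lambdastar_l/z|$ on the boundary. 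You should also verify $|z| > \|\mH\|$ uniformly on the disk boundary so the Neumann series converges, as the paper does via condition~\eqref{eq:lambdastar-max} and the first branch of the constraint on $\gamma$.
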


\begin{remark} \label{rem:challenge:asymm}
Unlike the case of symmetric noise matrices, where Weyl's inequality allows one to relate the eigenvalues of $\mM$ to $\mMstar$ straightforwardly, the analysis of the asymmetric noise matrix demands a more nuanced approach.
This is because the parallel result to Weyl's inequality, the Bauer-Fike Theorem \citep[see Theorem 1 in][]{chen2021asymmetry}), does not precisely specify the locations of $\lambda_1, \lambda_2, \dots, \lambda_r$.
A common strategy is to leverage powerful tools from random matrix theory to show that, under certain conditions, the sample eigenvalues $\lambda_l$ are indeed close to their population counterparts $\lambdastar_l$ for all $l \in [r]$ with high probability.
\end{remark}


\section{NUMERICAL RESULTS} \label{sec:numeric}
In this section, we apply our results to three matrix estimation examples with $\mMstar = \lambdastar \vustar \vustart$ and verify our findings through numerical experiments.
All experiments were run in a distributed environment on commodity hardware without GPUs.
We focus on empirically verifying our findings in Corollary~\ref{cor:eigenvalue:one} for eigenvalue estimation.
Our primary goal is to see how the estimation error $|\lambda - \lambdastar|$ varies with $\mu$. 
To generate $\vustar$ for a given $\mu$, we consider two schemes:
\begin{enumerate}
    \item\label{scheme:I} Pick a random sparse unit vector $\vustar \in \R^n$ with $\|\vustar\|_{0} = \lfloor n/\mu \rfloor =: m$. 
    Its nonzero entries are sampled uniformly over $\bbS^{m-1}$. 
    \item\label{scheme:II} Generate an $m$-sparse vector $\vv^{(1)} \in \R^{n}$ whose nonzero entries are $\pm 1/ \sqrt{m}$ with equal probability.
    Generate another random unit vector $\vv^{(2)}\in \bbS^{n-1}$ and set $\vustar$ to be a linear combination $0.7 \vv^{(1)} + 0.3 \vv^{(2)}$, normalized to have unit norm.
\end{enumerate}
These two approaches both guarantee that with high probability, $\|\vu\|_{\infty} \asymp \sqrt{\mu/n}$ up to logarithmic factors. 

\subsection{Gaussian matrix denoising}
Suppose $\mH$ has independent entries drawn from $\calN(0,\sigma_{ij}^2)$, where the $\sigma_{ij}$ are generated independently from a uniform distribution over $[0.7, 1]$. 
This allows $\mH$ to be heteroskedastic, with $\sigma_{ij} \leq \sigma = 1$. 
Under this construction, $\mH$ satisfies condition~\ref{item:assump-3b} in Assumption~\ref{assump:1} with $B \asymp \sigma\sqrt{\log n}$, ensuring that Equation~\eqref{eq:B-sigma-bound} holds. 
We pick $\lambdastar = \sqrt{n \log n}$ to satisfy condition~\eqref{eq:lambdastar:assump}. 
For a given $\mu$, we follow scheme~\ref{scheme:I} to generate $\vustar$. 

Numerical results are shown in Figure~\ref{fig:gauss}.
Each line corresponds to a fitted linear model between the estimation error and $n$.
As $n$ increases, even though $\lambdastar$ scales as $\sqrt{n \log n}$, the average estimation error remains confined within a narrow range of $[0.6, 0.8]$, showing no clear trend or noticeable dependence on $\mu$. 
This behavior aligns well with our bound in Equation~\eqref{eq:eigenvalue:one}. 

\begin{figure}[ht]
    \centering
    \includegraphics[width=\linewidth]{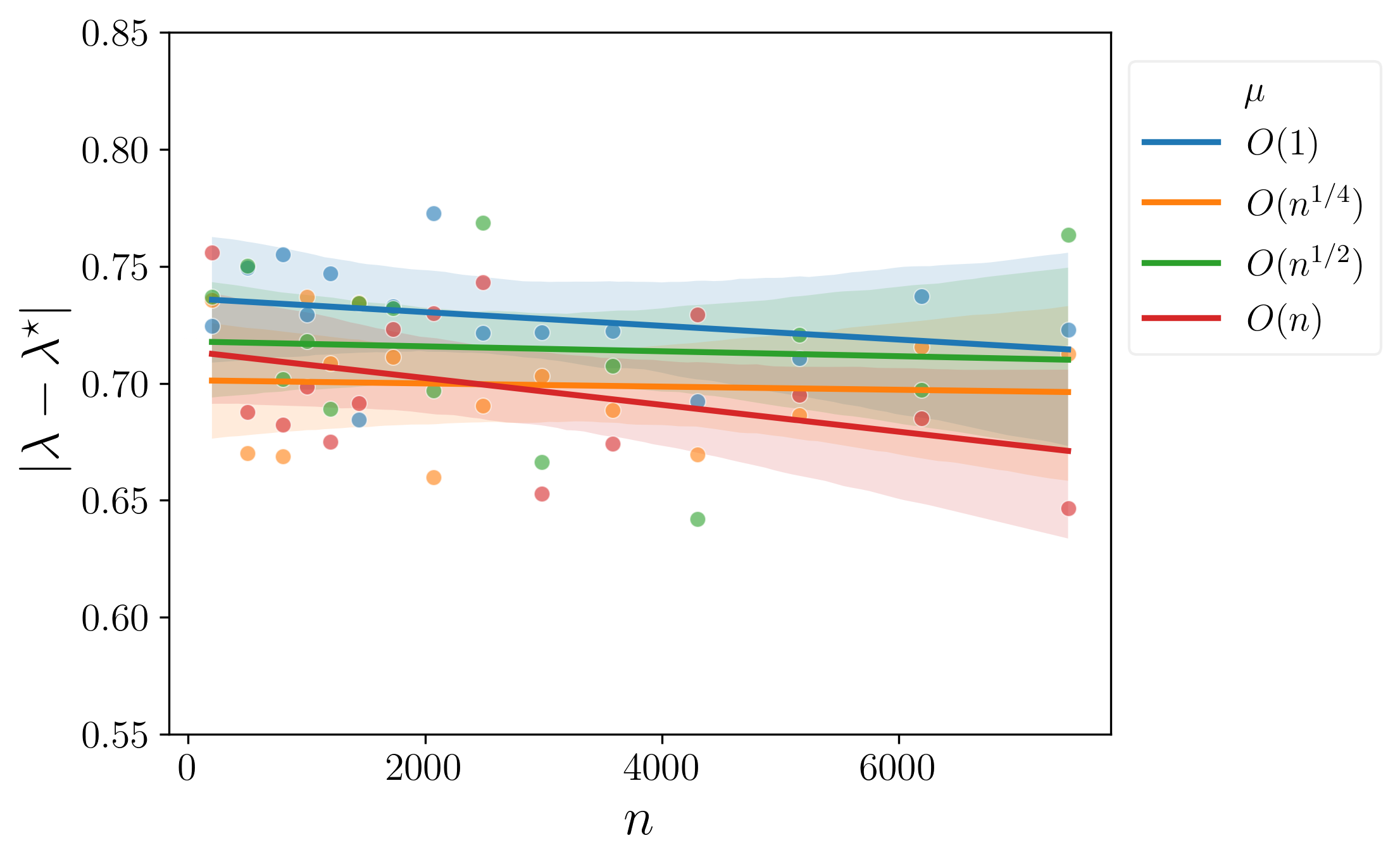}
    \caption{Eigenvalue estimation error by the sample eigenvalue of $\mM$ as a function of $n$. Different colors represent different levels of coherence $\mu \in \{O(1), O(n^{1/4}), O(n^{1/2}), O(n)\}$. Each point represents the average error over 300 independent trials. Shaded bands indicate $95\%$ bootstrap CIs for the fitted lines.}
    \label{fig:gauss} 
\end{figure} 

\subsection{Rank-one matrix completion}
Consider the setting in Example~\ref{ex:matrix-completion}.  
Although our result does not strictly apply here due to the violation of condition~\ref{assump:symm}, we expect a similar result to hold, and we verify it in this context.
We set $\lambdastar = 1$.
To satisfy condition~\eqref{eq:lambdastar:assump}, we require $p \gtrsim \mu^2\log n/n$. 
For $p$ to be in the range of $(0,1)$, we restrict $\mu \in [1, \sqrt{n}]$. 
Given a specific $\mu$, we generate $\vustar$ following scheme~\ref{scheme:II}. 
Setting $p = \mu^2 \log n/n$, our upper bound in Equation~\eqref{eq:eigenvalue:one} suggests that, up to some logarithmic factors, 
\begin{equation} \label{eq:mc-rate}
    |\lambda - \lambdastar| \lesssim \frac{1}{\sqrt{n}},
\end{equation}
indicating a coherence-free bound. 
The simulation results in Figure~\ref{fig:mc} support our findings in Equation~\eqref{eq:mc-rate}.
The estimation error lines for different choices of $\mu$ are almost parallel, indicating that the error rate has little dependence on $\mu$.
This behavior aligns well with the predicted coherence-free bound.

\begin{figure}[ht]
    \centering
    \includegraphics[width=\linewidth]{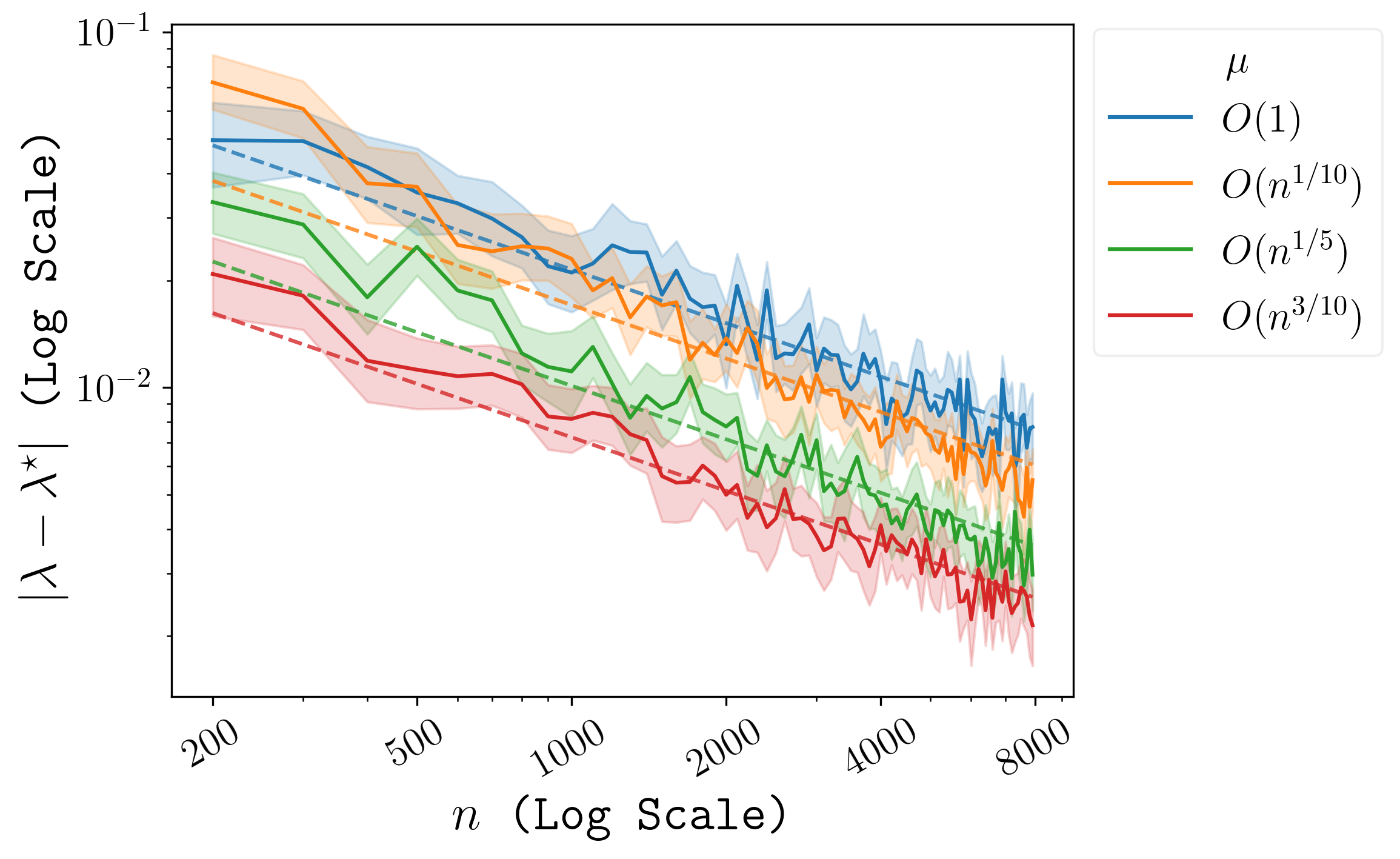}
    \caption{Eigenvalue estimation error as a function of $n$. Different colors represent varying levels of $\mu \in \{O(1), O(n^{0.1}), O(n^{0.2}), O(n^{0.3})\}$. The dashed lines correspond to the predicted rate from Equation~\eqref{eq:mc-rate}. Shaded bands indicate $95\%$ bootstrap CIs.}
    \label{fig:mc} 
\end{figure}

\subsection{Random rank-one network estimation}
Consider the setting in Example~\ref{ex:sparse-net}, with $\mP = \lambdastar \vustar \vustart$. 
In this case, $\rho \asymp \lambdastar \mu/n$. 
To satisfy condition~\eqref{eq:lambdastar:assump}, we require $\lambdastar \gtrsim \mu $. 
Additionally, for $\mP$ to have entries between $(0,1)$, we restrict $\mu \in [1, \sqrt{n}]$.
For a given $\mu$, we generate $\vustar$ following scheme~\ref{scheme:II}.
Taking $\lambdastar = \max\{\mu, \log n\}$, our upper bound in Equation~\eqref{eq:eigenvalue:one} suggests that, up to some logarithmic factors, 
\begin{equation} \label{eq:net-rate}
    |\lambda - \lambdastar| \lesssim \frac{\mu}{\sqrt{n}},
\end{equation}
which contrasts with the $O(n^{-1/2} \mu^{3/2})$ rate derived from Equation~\eqref{eq:ev-bound-suboptimal} (i.e., the suboptimal bound established in previous work).
While our bound indicates that the estimation error decreases as $n$ increases for all $\mu \in [1, \sqrt{n}]$, Equation~\eqref{eq:ev-bound-suboptimal} implies that the error should increase with $n$ when $\mu = \Omega(n^{1/3})$. 
The numerical results presented in Figure~\ref{fig:net} support our theoretical findings over the previously established bound.
The dashed lines indicate the rates predicted by Equation~\eqref{eq:net-rate}.
We see that they agree with the empirical error, with only minor deviations that are likely due to inaccuracies caused by logarithmic factors and small values of $n$.

\begin{figure}[ht]
    \centering
    \includegraphics[width=\linewidth]{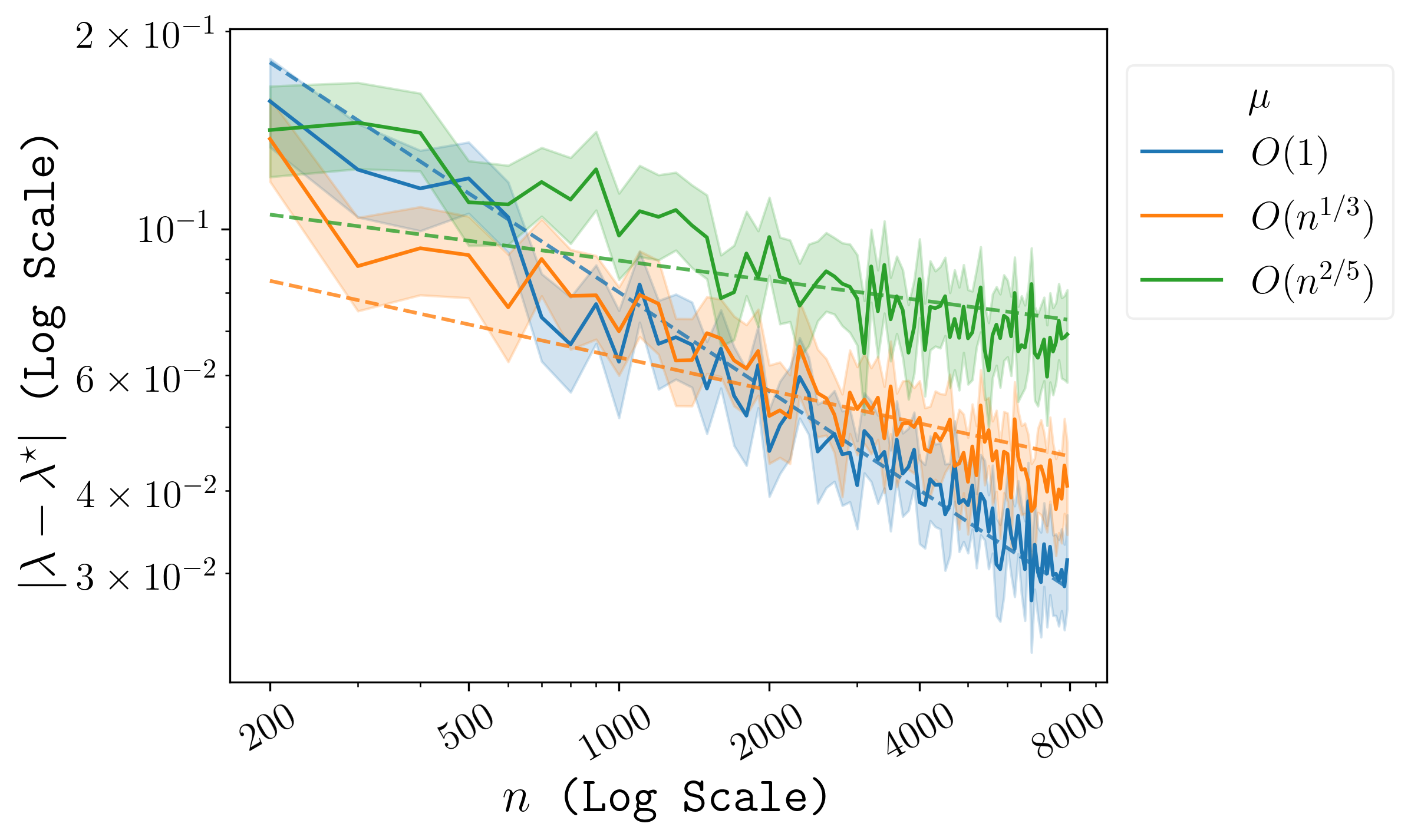}
    \caption{Eigenvalue estimation error as a function of $n$. Different colors represent different levels of coherence $\mu \in \{O(1), O(n^{1/3}), O(n^{2/5})\}$. The dashed lines indicate the predicted rate from Equation~\eqref{eq:net-rate}. Shaded bands indicate $95\%$ bootstrap CIs.} 
    \label{fig:net}
\end{figure}

\section{DISCUSSION} \label{sec:discussions}
We have presented a new concentration result in Theorem~\ref{thm:main}, which leads to upper bounds for eigenvalue and eigenvector estimation in signal-plus-noise matrix models with improved dependence on the coherence $\mu$.
Under some regimes, our upper bounds are sharp up to log-factors. 
Simulations for rank-one eigenvalue estimation further suggest that our new perturbation bounds accurately capture the correct dependence on $\mu$.
One limitation of our result is that it assumes that the noise distribution is symmetric about 0 in Assumption~\ref{assump:1}.
Future work will aim to relax this condition.
Additionally, while our result extends to symmetric noise matrices, a key challenge remains in removing the bias introduced by such matrices.
Future research will investigate potential methods to eliminate this bias.

\subsubsection*{Acknowledgements}
Support for this research was provided by the University of Wisconsin--Madison, Office of the Vice Chancellor for Research and Graduate Education.
In addition, HY was supported in part by NSF DMS 2023239 and KL was supported in part by NSF DMS 2052918.
The authors wish to thank Joshua Agterberg, 
Joshua Cape,
Junhyung Chang,
Rishabh Dudeja,
Pragya Sur
and Anru Zhang
for helpful discussions
and the anonymous reviewers for their suggestions and hard work, which greatly improved this paper.

\bibliography{bib}

\section*{Checklist}



 \begin{enumerate}

 \item For all models and algorithms presented, check if you include:
 \begin{enumerate}
   \item A clear description of the mathematical setting, assumptions, algorithm, and/or model. [{\bf Yes}/No/Not Applicable]
   See Sections~\ref{sec:intro} and~\ref{sec:setup}
   \item An analysis of the properties and complexity (time, space, sample size) of any algorithm. [{\bf Yes}/No/Not Applicable]
   Our results are centered on understanding the estimation rate of a certain class of spectral methods. Central to those results is dependence on the sample size. See Section~\ref{sec:main} and~\ref{sec:applications}.
   \item (Optional) Anonymized source code, with specification of all dependencies, including external libraries. [Yes/No/{\bf Not Applicable}]
   This is a theory paper; reported experiments are straightforward to implement in any programming language, though we will be happy to provide code if deemed necessary.
 \end{enumerate}

 \item For any theoretical claim, check if you include:
 \begin{enumerate}
   \item Statements of the full set of assumptions of all theoretical results. [{\bf Yes}/No/Not Applicable]
   See our main results in Section~\ref{sec:main} as well as setup and background discussed in Section~\ref{sec:setup}.
   \item Complete proofs of all theoretical results. [{\bf Yes}/No/Not Applicable]
   Detailed proofs are included in the supplemental materials.
   \item Clear explanations of any assumptions. [{\bf Yes}/No/Not Applicable]     
   See Section~\ref{sec:main} as well as setup and background discussed in Section~\ref{sec:setup}.
 \end{enumerate}

 \item For all figures and tables that present empirical results, check if you include:
 \begin{enumerate}
   \item The code, data, and instructions needed to reproduce the main experimental results (either in the supplemental material or as a URL). [{\bf Yes}/No/Not Applicable]
   Instructions for our simulation setup are included, and are straightforward to implement in any modern programming language. 
   \item All the training details (e.g., data splits, hyperparameters, how they were chosen). [Yes/No/{\bf Not Applicable}]
         \item A clear definition of the specific measure or statistics and error bars (e.g., with respect to the random seed after running experiments multiple times). [{\bf Yes}/No/Not Applicable]
         See Section~\ref{sec:numeric}, specifically the figure captions.
         \item A description of the computing infrastructure used. (e.g., type of GPUs, internal cluster, or cloud provider). [{\bf Yes}/No/Not Applicable]
         See Section~\ref{sec:numeric}. 
 \end{enumerate}

 \item If you are using existing assets (e.g., code, data, models) or curating/releasing new assets, check if you include:
 \begin{enumerate}
   \item Citations of the creator If your work uses existing assets. [Yes/No/{\bf Not Applicable}]
   This is a theory paper and does not use existing assets.
   \item The license information of the assets, if applicable. [Yes/No/{\bf Not Applicable}]
   This is a theory paper and does not use existing assets.
   \item New assets either in the supplemental material or as a URL, if applicable. [Yes/No/{\bf Not Applicable}]
   This is a theory paper and does not use existing assets.
   \item Information about consent from data providers/curators. [Yes/No/{\bf Not Applicable}]
   This is a theory paper and does not use existing assets.
   \item Discussion of sensible content if applicable, e.g., personally identifiable information or offensive content. [Yes/No/{\bf Not Applicable}]
   This is a theory paper and does not use existing assets.
 \end{enumerate}

 \item If you used crowdsourcing or conducted research with human subjects, check if you include:
 \begin{enumerate}
   \item The full text of instructions given to participants and screenshots. [Yes/No/{\bf Not Applicable}]
   This is a theory paper and does not use human subjects.
   \item Descriptions of potential participant risks, with links to Institutional Review Board (IRB) approvals if applicable. [Yes/No/{\bf Not Applicable}]
   This is a theory paper and does not use human subjects.
   \item The estimated hourly wage paid to participants and the total amount spent on participant compensation. [Yes/No/{\bf Not Applicable}]
   This is a theory paper and does not use human subjects.
 \end{enumerate}

 \end{enumerate}

\appendix
\numberwithin{equation}{section}
\onecolumn

\section{PROOF OF THEOREM~\ref{thm:IExHky-moment}}
We first present an outline of the proof of Theorem~\ref{thm:IExHky-moment}. 
Some technical lemmas are deferred to Section~\ref{sec:thm2:tech}. 
\begin{proof}
We begin by outlining our proof strategy.
First, we will expand the left-hand side of Equation~\eqref{eq:moment-bound-2p}.
Many terms in this expansion have an expectation of zero.
For the terms that do not disappear, we establish an upper bound.
This upper bound will depend on the specifics of each term, roughly corresponding to how many times different entries of $\mH$ appear in the product.
We will characterize how this structure corresponds to upper bound expressions in $\vx,\vy,\sigma$ and $B$.
We will use graphical tools to count how many terms in this expansion have each such possible upper bound.
Finally, by combining these elements, we arrive at Equation~\eqref{eq:moment-bound-2p}. 
Detailed steps are provided below. 

\textbf{Step 1.} Our first step is to expand the left hand side of Equation~\eqref{eq:moment-bound-2p}. 

For any $\vj = (j_1,j_2,\dots,j_{k+1}) \in [n]^{k+1}$, define 
\begin{equation} \label{eq:vzeta-vj-define}
    \vzeta_{\vj} := H_{j_1 j_2} H_{j_2 j_3} \cdots H_{j_k j_{k+1}}
\end{equation}
we have 
\begin{equation*}
    \vx^\top \mH^k \vy - \E \vx^\top \mH^k \vy  = \sum_{\vj \in [n]^{k+1}} x_{j_{1}} y_{j_{k+1}} \left(\vzeta_{\vj} - \E \vzeta_{\vj}\right).
\end{equation*}
Expanding $\E \left(\vx^\top \mH^k \vy - \E \vx^\top \mH^k \vy\right)^{p}$, we have 
\begin{equation}\label{eq:moment-i}
\begin{aligned}
    \E  &\left[\prod_{r=1}^p \sum_{\vjj{r}\in [n]^{k+1}} x_{\jr{1}} y_{\jr{k+1}} \left(\vzeta_{\vjj{r}} - \E \vzeta_{\vjj{r}}\right) \right] \\
    &= \sum_{\vjj{1}\in [n]^{k+1}}\cdots \sum_{\vjj{p}\in [n]^{k+1}} \E \Bigg[\prod_{r=1}^p x_{j^{(r)}_1} y_{\jr{k+1}} \left(\vzeta_{\vjj{r}} - \E \vzeta_{\vjj{r}}\right)\Bigg] \\
    &= \sum_{\vjj{1}\in [n]^{k+1}}\cdots \sum_{\vjj{p}\in [n]^{k+1}} \left(\prod_{r=1}^p x_{j^{(r)}_1} y_{\jr{k+1}}\right) \E \Bigg[\prod_{r=1}^p \left(\vzeta_{\vjj{r}} - \E \vzeta_{\vjj{r}}\right)\Bigg].
\end{aligned}
\end{equation}

As mentioned in the beginning of the proof, our key technique in establishing Equation~\eqref{eq:moment-bound-2p} is the use of graphical tools. 

\textbf{Step 2.} 
Every $\mJ = (\vjj{1},\vjj{2},\dots,\vjj{p}) \in [n]^{p(k+1)}$
has a corresponding term 
\begin{equation} \label{eq:single-moment}
        \E \Bigg[\prod_{r=1}^p \left(\vzeta_{\vjj{r}} - \E \vzeta_{\vjj{r}}\right)\Bigg]
    \end{equation} 
appearing in Equation~\eqref{eq:moment-i}, which we associate with an undirected, hollow graph $G$ determined by the choice of $\mJ = (\vjj{1},\vjj{2},\dots,\vjj{p}) \in [n]^{p(k+1)}$. 
The purpose of this is two-fold, 
\begin{enumerate}
    \item Many of the terms in Equation~\eqref{eq:moment-i} have expectation zero.
    Thus, we wish to characterize when Equation~\eqref{eq:single-moment} is nonzero. 
    This is addressed in Lemma~\ref{lem:G-cc-size-2}, proved below in Section~\ref{sec:moment-bounds}.
    \item When the quantity in Equation~\eqref{eq:single-moment} is nonzero, its upper bound will depend on the choice of $\mJ \in [n]^{p(k+1)}$, as $\mJ$ determines how many times different entries of $\mH$ appear in the product.
    Lemma~\ref{lem:single-moment-bound}, proved in Section~\ref{sec:moment-bounds}, characterizes these bounds, and Lemma~\ref{lem:term-bound}, provided in Section~\ref{sec:results}, counts how many $\mJ$ share the same upper bound.
\end{enumerate}

The graph $G$ has $pk$ vertices, which we identify with the elements of the set 
\begin{equation} \label{eq:def:calI}
    \calI = \left\{(r, l): r \in [p], \; l \in [k]\right\}.
\end{equation}
Given $\mJ\in [n]^{p(k+1)}$, we construct the edges of $G$ according to the following rule:
\begin{enumerate} [label=(\roman*)]
    \item \label{rule:(i)} For $(r_1,l_1),(r_2, l_2) \in \calI$ distinct, we form an (undirected) edge between $(r_1,l_1)$ and $(r_2, l_2)$ if and only if $\left(j^{(r_1)}_{l_1}, j^{(r_1)}_{l_1+1}\right) = \left(j^{(r_2)}_{l_2}, j^{(r_2)}_{l_2+1}\right)$.
\end{enumerate}
Denote by $\calG(\calI)$ the set of all undirected hollow graphs with vertex set $\calI$, and let $\psi: [n]^{p(k+1)} \to \calG(\calI)$ denote the function that maps a particular $\mJ$ to an associated graph $G \in \calG(\calI)$ according to the construction in Rule~\ref{rule:(i)}. 
We also let $\psi^{-1}$ to denote the inverse mapping of $\psi$, which maps a given graph $G \in \calG(\calI)$ to the set of all $\mJ$ that produces the same $G$ under Rule~\ref{rule:(i)}.

\textbf{Step 3.} With the construction of $G$, we turn our attention back to controlling Equation~\eqref{eq:moment-i}.
By Lemma~\ref{lem:single-moment-bound}, two graphs constructed from two different choices of $\mJ$ (i.e., two different expectation terms in the sum in Equation~\eqref{eq:moment-i}, corresponding to different instantiations of the expression in Equation~\eqref{eq:single-moment}) share the same upper bound if they have the same number of connected components. 
Letting $c(G)$ denote the number of connected components in $G$, we may reorganize the sum in Equation~\eqref{eq:moment-i} as 
\begin{equation*}
\begin{aligned}
    \sum_{L=1}^{pk/2} \sum_{G: c(G) = L} \sum_{\mJ \in \psi^{-1}(G)} \left(\prod_{r=1}^p x_{j^{(r)}_1} y_{\jr{k+1}}\right) \E \Bigg[\prod_{r=1}^p \left(\vzeta_{\vjj{r}} - \E \vzeta_{\vjj{r}}\right)\Bigg],
\end{aligned}
\end{equation*}
where the innermost sum runs over all $\mJ \in [n]^{p(k+1)}$ such that $\psi(\mJ) = G$, the second sum runs over all graphs $G$ with the same number of connected components $L$ and the outermost sum runs over all possible values of $L$.
The upper bound $L \leq pk/2$ comes from Lemma~\ref{lem:G-cc-size-2}. 

Taking absolute value and applying the triangle inequality,
\begin{equation}\label{eq:grand-decomposition}
\begin{aligned}
    &\left| \sum_{L=1}^{pk/2} \sum_{G: c(G) = L} \sum_{\mJ \in \psi^{-1}(G)} \left(\prod_{r=1}^p x_{j^{(r)}_1} y_{\jr{k+1}}\right) \E \Bigg[\prod_{r=1}^p \left(\vzeta_{\vjj{r}} - \E \vzeta_{\vjj{r}}\right)\Bigg] \right| \\
    &~~~~~~~~~\le \sum_{L=1}^{pk/2} \sum_{G: c(G) = L} \sum_{\mJ \in \psi^{-1}(G)}
    \left(\prod_{r=1}^p \left|x_{j^{(r)}_1} y_{\jr{k+1}}\right|\right) \left|\E \Bigg[\prod_{r=1}^p \left(\vzeta_{\vjj{r}} - \E \vzeta_{\vjj{r}}\right)\Bigg]\right|\\
    &~~~~~~~~~\leq \sum_{L=1}^{pk/2} \sum_{G: c(G) = L} \sum_{\mJ \in \psi^{-1}(G)} \|\vx\|_{\infty}^p \|\vy\|_{\infty}^p 2^p \sigma^{2 L} B^{p k-2 L},
\end{aligned}
\end{equation}
where the second inequality follows from Lemma~\ref{lem:single-moment-bound}. 
Applying Lemmas~\ref{lem:G-cc-count} (proved in Section~\ref{sec:moment-bounds}) and~\ref{lem:term-bound}, we obtain the bound
\begin{equation*}
\begin{aligned}
    &2^{p(k+1)} \|\vx\|_{\infty}^{p} \|\vy\|_{\infty}^{p} \sum_{L=1}^{pk/2}  (\sigma^2L)^{L} (BL)^{p k-2 L} \tau_L,
\end{aligned}
\end{equation*}
where $\tau_L$ is defined in Equation~\eqref{eq:term-bound} of Lemma~\ref{lem:term-bound} in Section~\ref{sec:results}, which bounds the number of terms associated with $G$ when $c(G) = L$. 

\textbf{Step 4.} We expand $\tau_L$ and simplify the expression. Letting $N_x = \|\vx\|_0$ and $N_y = \|\vy\|_0$, and define $S_{xy} = |\{i: x_i \neq 0, y_i \neq 0, i\in [n]\}|$, expanding $\tau_L$ according to Equation~\eqref{eq:term-bound} yields that Equation~\eqref{eq:single-moment} is bounded by
\begin{equation*}
\begin{aligned}
    2^{p(k+1)} (Bkp)^{pk} \|\vx\|_{\infty}^{p} \|\vy\|_{\infty}^{p} & \Biggl( \frac{\sigma^2 S_{xy}}{B^2 kp} + \sum_{L=2}^{k-1} \left(\frac{\sigma^2 n}{B^2 kp}\right)^L \frac{N_x N_y}{n^2} + \sum_{t=1}^{p/2} \left(\frac{\sigma^2 n}{B^2 kp}\right)^{tk} \left(\frac{N_x N_y}{n}\right)^t \\
    &~~~~~~~~~~~+ \sum_{t=2}^{p/2} \sum_{L=(t-1)k+1}^{tk-1} \frac{1}{N_x}\left(\frac{\sigma^2 n}{B^2kp}\right)^{L} \left(\frac{N_x N_y}{n}\right)^t \Biggr).
\end{aligned}
\end{equation*}
Under the assumption in Equation~\eqref{eq:B-sigma-bound}, we have 
\begin{equation*}
    \sum_{L=a}^{b} \left(\frac{\sigma^2 n}{B^2 kp}\right)^L \lesssim \left(\frac{\sigma^2 n}{B^2 kp}\right)^b 
\end{equation*}
for any $a \leq b$. 
Thus, the above term inside the bracket can be simplified to
\begin{equation}\label{eq:middle:bound:I}
\begin{aligned}
    &\frac{\sigma^2 S_{xy}}{B^2 kp} 
    + 
    \sum_{t=1}^{p/2} 
    \left(\frac{N_x N_y}{n}\right)^t \left(\frac{\sigma^2 n}{B^2kp}\right)^{tk}
\end{aligned}
\end{equation}
up to some constant factor. 
Furthermore,  since
\begin{equation*}
    \frac{\sigma^2 S_{xy}}{B^2 kp} \leq \frac{\sigma^2 (N_x N_y)^{1/2}}{B^2 kp}
\end{equation*}
and
\begin{equation*}
    \left(\frac{N_x N_y}{n}\right) \left(\frac{\sigma^2 n}{B^2kp}\right)^{k} \geq \left(\frac{N_x N_y}{n}\right) \left(\frac{\sigma^2 n}{B^2kp}\right) = \frac{\sigma^2 N_x N_y}{B^2 kp}  \geq \frac{\sigma^2 (N_x N_y)^{1/2}}{B^2 kp},
\end{equation*}
Equation~\eqref{eq:middle:bound:I} can be further bounded by 
\begin{equation} \label{eq:middle:bound:II}
    \sum_{t=1}^{p/2} \left(\frac{\sigma^{2} n^{1-1/k} (N_x N_y)^{1/k}}{B^{2} kp}\right)^{kt}
\end{equation}
up to some constant. 

\textbf{Step 5.} The bound in Equation~\eqref{eq:middle:bound:II} can be further simplified under different conditions.
If $B^2 kp \leq \sigma^{2} n^{1-1/k} (N_x N_y)^{k}$, Equation~\eqref{eq:middle:bound:II} is bounded by
\begin{equation*}
    \frac{p}{2} \left(\frac{\sigma^{2} n^{1-1/k} (N_x N_y)^{1/k}}{B^{2} kp}\right)^{kp/2} = \frac{p\sigma^{kp} n^{\frac{kp}{2} - \frac{p}{2}} (N_x N_y)^{p/2} }{2B^{kp} (kp)^{kp/2}}
\end{equation*}
Otherwise, it is bounded by
\begin{equation*}
    \frac{p}{2} \left(\frac{\sigma^{2} n^{1-1/k} (N_x N_y)^{1/k}}{B^{2} kp}\right)^{k} = \frac{p\sigma^{2k} n^{k-1} (N_x N_y)}{2B^{2k} (kp)^k}. 
\end{equation*}

Combining the above two cases, Equation~\eqref{eq:moment-i} is up to some constant factor bounded by
\begin{equation*}
    p 2^{p(k+1)} ~(k p)^{k p} ~ (\|\vx\|_{\infty}\|\vy\|_{\infty})^p ~\max\biggl\{
    \frac{\sigma^{k p} n^{\frac{k p}{2}-\frac{p}{2}}\left(N_x N_y\right)^{p / 2}}{(k p)^{k p / 2}},  \frac{ B^{(p-2) k}\sigma^{2 k} n^{k-1} N_x N_y}{(k p)^k} \biggr\}
\end{equation*}
as desired.
\end{proof}
\section{PROOF OF COROLLARY~\ref{cor:sparse-bound}}
\begin{proof}
    Similar to the proof of Corollary 4 in \cite{chen2021asymmetry}, we assume that $20 \log n$ is an integer to avoid the clumsy notation $\lfloor 20 \log n\rfloor$. 
    Extending to the case where $20 \log n$ is not an integer is straightforward.

Applying a high-order Markov inequality for any even integer $p$ and $t > 0$,
\begin{equation*}
    \Pr\left(\left|\vx^\top \mH^k \vy - \E \vx^\top \mH^k \vy \right| \geq t \right) \leq \frac{\E\left(\vx^{\top} \mH^k \vy - \E \vx^{\top} \mH^k \vy\right)^p}{t^p}.
\end{equation*}
Using Theorem~\ref{thm:IExHky-moment} on the right-hand side yields the bound  
\begin{equation*}
\begin{aligned}
    &p 2^{p(k+1)} (\|\vx\|_{\infty}\|\vy\|_{\infty})^p \max\biggl\{
    \frac{(\sigma^{2} n kp)^{kp/2} n^{-\frac{p}{2}}\left(\|\vx\|_0 \|\vy\|_0\right)^{p / 2}}{t^p},  \frac{ (Bkp)^{kp } (\sigma^2 n/B^2 kp)^{k} n^{-1}\|\vx\|_0 \|\vy\|_0}{t^p} \biggr\}.
\end{aligned}
\end{equation*}
Taking 
\begin{equation*}
\begin{aligned}
    t &= c_2^k \|\vx\|_{\infty}\|\vy\|_{\infty} \max\Biggl\{(\sigma^2 n \log^3 n)^{\frac{k}{2}} \left(\frac{\|\vx\|_0 \|\vy\|_0}{n}\right)^{1/ 2}, \left(\frac{\sigma^2 n}{B^2}\right)^{\frac{k}{p}} \left(\frac{\|\vx\|_0 \|\vy\|_0}{n}\right)^{\frac{1}{p}} (B\log^3 n)^k\Biggr\}
\end{aligned}
\end{equation*}
into the above display yields that 
\begin{equation*}
    p 2^{p(k+1)}\max \left\{ \frac{(kp)^{\frac{kp}{2}}}{c_2^{kp} (\log^3 n)^{\frac{kp}{2}}}, \frac{(kp)^{kp-k}}{c_2^{kp} (\log^3 n)^{kp}} \right\}
    \leq p \max \left\{ \left(\frac{16 kp}{c_2^{2} \log^3 n}\right)^{\frac{kp}{2}}, (kp)^{-k} \left(\frac{4kp}{c_2\log^3 n}\right)^{kp} \right\} .
\end{equation*}
For any $k \leq 20\log n$, choosing $p = k^{-1}\log^3 n$ (here again, we assume that $p$ is an integer without loss of generality), the above display is bounded by
\begin{equation*}
    p \max \left\{ \left(\frac{1280}{c_2^{2}}\right)^{20\log n}, \frac{1}{(kp)^{k}} \left(\frac{320^2}{c_2^2}\right)^{20\log n} \right\},
\end{equation*}
which is bounded by $O(n^{-40} \log^3 n)$ for some constant $c_2$ sufficiently large. 
Therefore, applying a union bound over $2 \leq k \leq 20 \log n$, we have Equation~\eqref{eq:sparse-bound} holds for all $2 \leq k \leq 20 \log n$ with probability at least $1 - O(n^{-40} \log^4 n)$.  
Noting that $n = e^{\log n}$, we obtain Equation~\eqref{eq:sparse-bound}. 
\end{proof}
\section{PROOF OF THEOREM~\ref{thm:main}}
\begin{proof}
We start by grouping the entries in $\vx$ and $\vy$ according to their magnitudes. 
To avoid the clumsy notation $\lfloor \frac{1}{2}\log n \rfloor$, we again assume that $\frac{1}{2}\log n$ is an integer. 
Extending the proof to the case where $\frac{1}{2}\log n$ is not an integer is straightforward and we omit the details here.

Set $m = \frac{1}{2}\log n$ and let 
\begin{equation*}
    I_1 = \left(e^{-1}, 1\right], \; I_2 = \left(e^{-2}, e^{-1}\right], \cdots, I_{m} = \left(e^{-m}, e^{-m+1}\right], 
\end{equation*}
and
\begin{equation*}
    I_{m+1} = \left[0, e^{-\frac{\log n}{2}}\right] = \left[0, \frac{1}{\sqrt{n}}\right]. 
\end{equation*}
For any $i_1, i_2 \in [m+1]$, we construct vectors $\vxb^{(i_1)}$ and $\vyb^{(i_2)}$ by 
\begin{equation*}
    \xb^{(i_1)}_l = 
    \begin{cases}
        x_l, & \mbox{ if } |x_l| \in I_{i_1},\\
        0 & \mbox{ otherwise }
    \end{cases}
~~~\text{ and } ~~~
    \yb^{(i_2)}_l = 
    \begin{cases}
        y_l, & \mbox{ if } |y_l| \in I_{i_2}, \\
        0 & \mbox{ otherwise }.
    \end{cases}
\end{equation*}
We can then decompose $\vx$ and $\vy$ into
\begin{equation} \label{eq:unit:decomposition}
    \vx = \sum_{i_1=1}^{m+1} \vxb^{(i_1)} \quad \text{and} \quad \vy = \sum_{i_2=1}^{m+1} \vyb^{(i_2)}.
\end{equation}

By definition, we have
\begin{equation} \label{eq:decomposition:max-bound}
    \left\|\vxb^{(r)}\right\|_{\infty} \leq e\cdot e^{-r} \quad \text{for all } r \in [m], \quad \text{and } \left\|\vxb^{(m+1)}\right\|_{\infty} \leq n^{-1/2}. 
\end{equation}
Since $\|\vx\|^2_2 = 1$, it follows that for all $r \in [m]$, we have $e^{-2r}\left\|\vxb^{(r)}\right\|_{0} \leq 1$, from which
\begin{equation} \label{eq:decomposition:l0-bound}
\left\|\vxb^{(r)}\right\|_{0} \leq e^{2r}
\end{equation}
and trivially, $\|\vxb^{(m+1)}\|_{0} \leq n$. 

Thus, by Equation~\eqref{eq:unit:decomposition}, we have the decomposition
\begin{equation*}
\begin{aligned}
    &\vx^\top \mH^k \vy - \E \vx^\top \mH^k \vy = \sum_{i_1 \in [m+1]} \sum_{i_2 \in [m+1]} \left(\vxb^{(i_1) \top} \mH^k \vyb^{(i_2)} - \E \vxb^{(i_1) \top} \mH^k \vyb^{(i_2)} \right).
\end{aligned}
\end{equation*}
Taking absolute value on both sides and applying the triangle inequality, we have
\begin{equation*}
\left| \vx^\top \mH^k \vy - \E \vx^\top \mH^k \vy \right|
\le
    \left(\frac{\log n}{2}+1\right)^2 \max_{i_1, i_2 \in [m+1]} \left|\vxb^{(i_1) \top} \mH^k \vyb^{(i_2)} - \E \vxb^{(i_1) \top} \mH^k \vyb^{(i_2)} \right|.
\end{equation*}
Combining Equation~\eqref{eq:sparse-bound} in Corollary~\ref{cor:sparse-bound} with the union bound, the above is bounded up to constant factors by
\begin{equation}\label{eq:main-middle}
\begin{aligned}
    \left( c_2^k \log^{2} n \right) \max_{i_1, i_2 \in [m+1]}&\left\|\vxb^{(i_1)}\right\|_{\infty}\left\|\vyb^{(i_2)}\right\|_{\infty} \Biggl\{
    (B\log^3 n)^k,
    \left(\sigma^2 n \log^3 n\right)^{\frac{k}{2}} \sqrt{\frac{1}{n}} \left(\left\|\vxb^{(i_1)}\right\|_{0} \left\|\vyb^{(i_2)}\right\|_0\right)^{\frac{1}{2}}
     \Biggr\} 
\end{aligned}
\end{equation}
with probability at least $1 - O(n^{-40} \log^6 n)$. 
By Equations~\eqref{eq:decomposition:max-bound} and~\eqref{eq:decomposition:l0-bound}, we have for all $1 \leq i_1, i_2 \leq m+1$,
\begin{equation} \label{eq:construction}
    \left(\left\|\vxb^{(i_1)}\right\|_{0} \left\|\vyb^{(i_2)}\right\|_0\right)^{\frac{1}{2}}  \left\|\vxb^{(i_1)}\right\|_{\infty}\left\|\vyb^{(i_2)}\right\|_{\infty} \leq e^2,
\end{equation} %
Thus, up to some constant factor, the bounding quantity in Equation~\eqref{eq:main-middle} can be simplified to 
\begin{equation}\label{eq:simplified-I}
\begin{aligned}
    &\left( c_2^k \log^2 n \right)  \max_{i_1, i_2 \in [m+1]}\Biggl\{\left(\sigma^2 n \log^3 n\right)^{\frac{k}{2}} \sqrt{\frac{1}{n}}, (B\log^3 n)^k \left\|\vxb^{(i_1)}\right\|_{\infty}\left\|\vyb^{(i_2)}\right\|_{\infty} \Biggr\}.
\end{aligned}
\end{equation}
Since by construction, for all $1 \leq i_1, i_2 \leq m+1$, 
\begin{equation*}
    \left\|\vxb^{(i_1)}\right\|_{\infty}\left\|\vyb^{(i_2)}\right\|_{\infty} \leq \left\|\vx\right\|_{\infty}\left\|\vy\right\|_{\infty}, 
\end{equation*}
substitute $\|\vx\|_{\infty}\|\vy\|_{\infty}$ into the above bound yields Equation~\eqref{eq:main-bound}. 
\end{proof}
\section{PROOF OF LEMMAS~\ref{lem:xHky-mean} and~\ref{lem:xHy-bound}}
Here, we prove Lemmas~\ref{lem:xHky-mean} and~\ref{lem:xHy-bound}, which are used to control the expectation of $\vx^\top \mH^k \vy$.
To establish Lemma~\ref{lem:xHky-mean}, we first establish a coarse bound in the following technical lemma. 
\begin{lemma} \label{lem:xHky-mean-sparse}
    Under the same setting as Theorem~\ref{thm:IExHky-moment}, for all $2\leq k\leq 20\log n$, 
    \begin{equation*}
    \begin{aligned}
        &\left| \E \left[\vx^\top \mH^k \vy\right] \right| \lesssim (2k)^k \|\vx\|_{\infty} \|\vy\|_{\infty} \max\left\{ \frac{\sigma^2 B^{k-2} S_{xy}}{k}, \left(\frac{\sigma^2 n}{k}\right)^{k/2} \frac{\|\vx\|_0 \|\vy\|_0}{n^2}\right\},
    \end{aligned}
    \end{equation*}
    where $S_{x y}=\left|\left\{i: x_i \neq 0, y_i \neq 0, i \in[n]\right\}\right|$.
\end{lemma}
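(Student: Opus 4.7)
The plan is to adapt the combinatorial/graphical argument from the proof of Theorem~\ref{thm:IExHky-moment} to the single-product case, i.e., the $p = 1$ specialization of that argument. The starting point is the expansion
\begin{equation*}
    \E\left[\vx^\top \mH^k \vy\right] = \sum_{\vj \in [n]^{k+1}} x_{j_1} y_{j_{k+1}} \, \E\prod_{l=1}^{k} H_{j_l j_{l+1}}.
\end{equation*}
Because the entries $H_{ij}$ are independent and symmetric about zero by Assumption~\ref{assump:1}, the expectation $\E\prod_l H_{j_l j_{l+1}}$ vanishes unless every distinct ordered pair $(j_l, j_{l+1})$ appearing in the product occurs an even (in particular, at least twice) number of times. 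Associating to each $\vj$ the undirected graph $G = \psi(\vj)$ on the vertex set $\calI = [k]$ exactly as in Step~2 of the proof of Theorem~\ref{thm:IExHky-moment}, let $L = c(G)$ denote the number of connected components of $G$; this equals the number of distinct directed edges used by the walk $\vj$, and the nonvanishing constraint forces $L \leq k/2$.

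For any nonvanishing $\vj$, applying the Bernstein moment bound of Equation~\eqref{eq:high-moment} to the $L$ distinct $H$-entries (with multiplicities $n_1, \ldots, n_L \geq 2$ satisfying $\sum_i n_i = k$) gives
\begin{equation*}
    \left|\E\prod_{l=1}^k H_{j_l j_{l+1}}\right| \leq \prod_{i=1}^L \sigma^2 B^{n_i-2} = \sigma^{2L} B^{k-2L}.
\end{equation*}
It remains to bound, for each $L$, the weighted count $\sum_{\vj : c(\psi(\vj)) = L} |x_{j_1} y_{j_{k+1}}|$. Any such walk visits at most $V \leq L+1$ distinct vertices (with equality when the underlying multigraph is a tree). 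Because $j_1 \in \supp(\vx)$ and $j_{k+1} \in \supp(\vy)$ are forced by the weight, the count splits into a closed contribution ($j_1 = j_{k+1}$, bounded by $\|\vx\|_\infty \|\vy\|_\infty \, S_{xy} \, n^{V-1}$) and an open contribution (bounded by $\|\vx\|_\infty \|\vy\|_\infty \, \|\vx\|_0 \|\vy\|_0 \, n^{V-2}$). Combining this with a $p=1$ analogue of the $\tau_L$-style bookkeeping of Lemma~\ref{lem:term-bound}---which contributes a combinatorial factor of order $(2k)^k$---gives a per-$L$ bound.

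Summing the per-$L$ bounds over $L = 1, \ldots, k/2$ yields a geometric-type series in $\sigma^2 n/(B^2 k)$. Under Equation~\eqref{eq:B-sigma-bound} this ratio exceeds $1$, so the sum is dominated by its two extremes. The $L = 1$ term corresponds to a self-loop at a single vertex in $\supp(\vx) \cap \supp(\vy)$ and contributes (up to constants) $\sigma^2 B^{k-2} S_{xy}/k$; the $L = k/2$ term corresponds to each distinct directed edge being used exactly twice, contributing $(\sigma^2 n/k)^{k/2} \|\vx\|_0 \|\vy\|_0 / n^2$ after accounting for the vertex-count reduction forced by the constraint that the walk begin in $\supp(\vx)$ and end in $\supp(\vy)$. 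Carrying along the overall $(2k)^k \|\vx\|_\infty \|\vy\|_\infty$ prefactor then yields the stated bound. The main technical obstacle is this bookkeeping of combinatorial factors: in particular, tracking the $(2k)^k$ prefactor, verifying the exponent of $n$ in the $L = k/2$ term (which uses that an open walk traversing each tree edge exactly twice is impossible, forcing an effective reduction in the number of free vertices in the open case and a Catalan-style count in the closed case), and checking that the intermediate $L$ are subsumed by the two endpoint terms under the noise growth condition~\eqref{eq:B-sigma-bound}.
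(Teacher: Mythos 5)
Your proposal is essentially the same proof as the paper's: expand $\E[\vx^\top \mH^k \vy]$ over index walks $\vj$, classify walks by the graph $G = \psi(\vj)$ and its number of connected components $L$, bound each nonvanishing term by $\sigma^{2L}B^{k-2L}$ via the Bernstein moment inequality (Lemma~\ref{lem:single-moment-bound} with $p=1$), count the walks per graph via the $\tau_L$ machinery of Lemma~\ref{lem:term-bound}, and sum over $L$, using Equation~\eqref{eq:B-sigma-bound} to reduce the geometric sum to its endpoints. This is exactly how the paper proceeds, so there is no new route here.

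One remark on the extra commentary you add. You split the count into a closed contribution $S_{xy}n^{V-1}$ and an open contribution $N_xN_y n^{V-2}$, and attribute the reduction $V\leq L$ for $L=k/2$ to the fact that ``an open walk traversing each tree edge exactly twice is impossible.'' In fact for $p=1$ under Condition~\ref{assump:symm} a stronger statement holds for \emph{every} $L$: every distinct ordered pair $(j_\ell,j_{\ell+1})$ must appear an even number of times, so every vertex has even in-degree and even out-degree in the walk multigraph, which forces $j_1=j_{k+1}$. Thus the ``open'' contribution is identically zero, and the relevant bound for all nonvanishing walks is the closed one. This parity observation is a nice perspective (and is not spelled out in the paper), but it does not by itself yield the exponent $N_xN_y n^{L-2}$; for that you still need the color-counting mechanism of Lemma~\ref{lem:term-bound-cc}, i.e., the reduction of the total color count from $L+1$ to $L$ via the cycle/pigeonhole argument, just as the paper invokes. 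Since you state you will ultimately use the $p=1$ instance of Lemma~\ref{lem:term-bound}, the proposal goes through along the paper's lines; just be aware that the closed/open split as written is not what Lemma~\ref{lem:term-bound} produces, and replacing its bound with $S_{xy}n^{V-1}$ (even with $V\leq L$) would give $S_{xy}n^{L-1}$, which is incomparable to the target $N_xN_y n^{L-2}$ when $S_{xy}n > N_xN_y$.
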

\begin{proof}[Proof of Lemma~\ref{lem:xHky-mean-sparse}]
Expanding the term $\E \left[\vx^\top \mH^k \vy\right]$, we have 
\begin{equation} \label{eq:expand-k}
\E \left[\vx^\top \mH^k \vy\right] =
    \sum_{\vj \in [n]^{p+1}} x_{j_1} y_{j_{k+1}} \E \vzeta_{\vj}.
\end{equation}
where $\vzeta_{\vj}$ is defined in Equation~\eqref{eq:vzeta-vj-define}. 
For $\E \vzeta_{\vj}$, similar to the proof of Theorem~\ref{thm:IExHky-moment}, we create a graph $G$ with $k$ vertices.
By Lemma~\ref{lem:G-cc-count}, the number $L$ of connected components of $G$ is bounded by $k/2$. 
Accordingly, we rewrite Equation~\eqref{eq:expand-k} as
\begin{equation*}
\E \left[\vx^\top \mH^k \vy\right] 
= \sum_{L=1}^{k/2} \sum_{G:c(G)=L} \sum_{\vj \in \psi^{-1}(G)} x_{j_1} y_{j_{k+1}} \E \vzeta_{\vj}
\end{equation*}
Taking the absolute value and applying the triangle inequality to the above display yields 
\begin{equation*}
\left| \E \left[\vx^\top \mH^k \vy\right] \right|
\lesssim
    2^k \|\vx\|_{\infty} \|\vy\|_{\infty} \sum_{L=1}^{k/2} \left(\frac{\sigma^2 k}{2}\right)^{L} \left(\frac{Bk}{2}\right)^{k-2 L}  \tau_L 
\end{equation*}
up to some constant, where $\tau_L$ is as defined in Equation~\eqref{eq:term-bound} of Lemma~\ref{lem:term-bound} in Section~\ref{sec:results}. 
Similar to the proof of Theorem~\ref{thm:IExHky-moment}, we let $N_x=\|\vx\|_0$ and $N_y=\|\vy\|_0$.
Applying Lemma~\ref{lem:term-bound} (see Section~\ref{sec:thm2:tech} below), the right hand side of the above display is bounded by
\begin{equation*}
\begin{aligned}
\left| \E \left[\vx^\top \mH^k \vy\right] \right|
&\lesssim
    \|\vx\|_{\infty} \|\vy\|_{\infty} 2^k (Bk)^{k} \left( \frac{\sigma^2 S_{xy}}{B^2 k} + \frac{N_x N_y}{n^2} \sum_{L=2}^{k/2}  \left(\frac{\sigma^{2} n}{B^{2} k}\right)^L\right)\\
    &\lesssim (2k)^k \|\vx\|_{\infty} \|\vy\|_{\infty} \left( \frac{\sigma^2 S_{xy}}{k} B^{k-2} + \frac{ \sigma^k N_x N_y n^{k/2-2} }{ k^{k/2} } \right),
\end{aligned}
\end{equation*}
where the second inequality follows from the assumption that $B \ll \sigma \sqrt{n/\log^3 n}$. 
Lemma~\ref{lem:xHky-mean-sparse} then follows from applying the trivial inequality $a+b \leq 2\max\{a, b\}$ to the above display. 
\end{proof}

\begin{proof}[Proof of Lemma~\ref{lem:xHky-mean}]
    Similar to the proof of Theorem~\ref{thm:main}, we adopt the decomposition in Equation~\eqref{eq:unit:decomposition}. 
    Expanding $\E \left[\vx^\top \mH^k \vy\right]$ and applying the triangle inequality,
    \begin{equation} \label{eq:middle_step}
    \begin{aligned}
        \left|\E \left[\vx^\top \mH^k \vy\right]\right| &\leq \sum_{i_1 \in [m+1]} \sum_{i_2 \in [m+1]} \left|\E \vxb^{(i_1)\top} \mH^k \vyb^{(i_2)}\right|\\
        &\leq (m+1)^2 \max_{1\leq i_1, i_2 \leq m+1} \left\{\left|\E \vxb^{(i_1)\top} \mH^k \vyb^{(i_2)}\right|\right\}.
    \end{aligned}
    \end{equation}
    Let $S^{(i_1, i_2)}$ denote the intersection of the support of $\vx$ and $\vy$.
    Noting that 
    \begin{equation*}
        S^{(i_1, i_2)} \left\|\vx^{(i_1)}\right\|_{\infty} \left\|\vy^{(i_2)}\right\|_{\infty} \leq \left(\left\|\vx^{(i_1)}\right\|_{0} \left\|\vy^{(i_2)}\right\|_{0}\right)^{1/2}\left\|\vx^{(i_1)}\right\|_{\infty} \left\|\vy^{(i_2)}\right\|_{\infty} 
        \leq e^2, 
    \end{equation*}
    where the last inequality follows from Equation~\eqref{eq:construction}. 
    Combining the above display with Lemma~\ref{lem:xHky-mean-sparse}, Equation~\eqref{eq:middle_step} is bounded by
    \begin{equation*}
    \begin{aligned}
        (\log n)^2 (2k)^k \max_{1\leq i_1, i_2 \leq m+1}\left\{ \frac{\sigma^2 B^{k-2}}{k} , \left(\frac{\sigma^2 n}{k}\right)^{k/2} \frac{(\left\|\vx^{(i_1)}\right\|_{0} \left\|\vy^{(i_2)}\right\|_{0})^{1/2}}{n^2}\right\}.
    \end{aligned}
    \end{equation*}
    up to some constant factors, which is further bounded by
    \begin{equation*} \begin{aligned}
        (\log n)^2 (2k)^k \max \left\{ \frac{\sigma^2 B^{k-2}}{k} , \left(\frac{\sigma^2 n}{k}\right)^{k/2-2}\right\},
    \end{aligned}     \end{equation*}
    and completes the proof.
\end{proof}

\begin{proof}[Proof of Lemma~\ref{lem:xHy-bound}]
By Assumption~\ref{assump:1}, 
\begin{equation*}
    \max_{1\leq i, j \leq n}\left|x_i y_j H_{ij} \right| \leq \|\vx\|_{\infty} \|\vy\|_{\infty} B
~~~\text{ and }~~~
\sum_{1\leq i, j \leq n} \E \left[(x_i y_j )^2 H_{ij}^2\right] \leq \sigma^2.
\end{equation*}
Setting $L = \|\vx\|_{\infty} \|\vy\|_{\infty} B \quad$ and $\nu = \sigma^2$
and applying Bernstein's inequality \citep[see, e.g.,][ or refer to Lemma~\ref{lem:bernstein} proved below in Section~\ref{sec:aux}]{vershynin2018HDP}, we have for any $t > 0$,
\begin{equation*}
    \Pr\left\{\left|\vx^\top \mH \vy\right| \geq t\right\} \leq 2\exp\left\{-\min\left(\frac{t^2}{4\sigma^2}, \frac{3t}{4\|\vx\|_{\infty} \|\vy\|_{\infty} B}\right)\right\}.
\end{equation*}
Taking $t = c_2\max\left\{\sigma \sqrt{\log n}, \|\vx\|_{\infty} \|\vy\|_{\infty} B \log n\right\}$ for $c_2$ chosen sufficiently large ensures that Equation~\eqref{eq:xHy-bound} in Lemma~\ref{lem:xHy-bound} holds with probability at least $1 - O(n^{-40})$.  
\end{proof}

\section{PROOFS FOR THEOREM~\ref{thm:symm-concentration} AND LEMMA~\ref{lem:symm-mean}}
\begin{proof}[Proof of Theorem~\ref{thm:symm-concentration}]
    Following the proof of Lemma 6.5 in \cite{erdos2013spectral}, we decompose $\mW$ as 
    \begin{equation*}
        \mW = \mH_1 + \mH_2,
    \end{equation*}
    where $\mH_1$ is the lower triangular part of $\mW$ (including the diagonal), $\mH_2$ is the upper triangular part of $\mW$ (excluding the diagonal). 
    As a result, we have 
    \begin{equation*}
        \left|\vx^\top \mW^k \vy - \E \vx^\top \mW^k \vy\right| \leq \sum_{d \in \{1,2\}^{k}} \left|\vx^\top \left(\prod_{i=1}^k \mH_{d_i}\right) \vy - \E \vx^\top \left(\prod_{i=1}^k \mH_{d_i}\right) \vy\right|. 
    \end{equation*}
    By Lemma 6.5 in \cite{erdos2013spectral}, all the $2^k$ terms can be handled in exactly the same manner.
    Thus, applying a union bound over all $2^k$ terms, we need only consider 
    \begin{equation*}
        \left|\vx^\top \mH_{1}^k \vy - \E \vx^\top \mH_{1}^k \vy\right|.
    \end{equation*}
    This is handled by Theorem~\ref{thm:main} and Lemma~\ref{lem:xHy-bound}.
    Taking a union bound over all $2^k \leq n^{20}$ terms, we obtain that Equation~\eqref{eq:symm-high-prob-bound} holds for all $2\leq k\leq 20\log n$ with probability at least $1 - O(n^{-20}\log^6 n)$. 
\end{proof}

\begin{proof}[Proof of Lemma~\ref{lem:symm-mean}]
    It is straightforward to see that for all $i \in [n]$, the term $\E \left(\ve_i^\top \mW^k \ve_i\right)$ shares the same upper bound in terms of $B$ and $\sigma$ and it suffices to provide an upper bound for 
\begin{equation*}
    \frac{1}{n} \sum_{i=1}^n \E \left(\ve_i^\top \mW^k \ve_i\right) = \frac{1}{n} \E \tr(\mW^k).
\end{equation*}
Alternatively, one can easily adapt the proof for $\frac{1}{n} \E \tr(\mW^k)$ to $\E \left(\ve_i^\top \mW^k \ve_i\right)$ by fixing the first index to be $i$ and see that they share the same bound; we omit the details here.  
The term $\frac{1}{n} \E \tr(\mW^k)$ is a well-studied quantity in random matrix theory for establishing semicircle law.
See Chapter 2 in \cite{bai2010spectral} and Chapter 2 in \cite{tao2012topics}. 
By Theorem 2.3.16 and Remark 2.3.17 in \cite{tao2012topics}, we have
\begin{equation*}
    \frac{1}{n} \E \tr(\mW^k) \lesssim C_{k/2} (\sigma^2 n)^{k/2}
\end{equation*}
when $k$ is an even integer, and $\E \tr(\mW^k) = 0$ when $k$ is odd.

When $i \neq j$, we have 
\begin{equation} \label{eq:ei-ej-mean-zero}
    \E \left(\ve_i^\top \mW^k \ve_j\right) = 0. 
\end{equation}
To see this, we take $i = 1, j=2$ and expand the term $\ve_1^\top \mW^k \ve_2$, 
\begin{equation*} 
    \E \ve_1^\top \mW^k \ve_2 = \sum_{j_1 \in [n]} \sum_{j_2 \in [n]} \cdots \sum_{j_{k-1} \in [n]} \E W_{j_0 j_1} W_{j_1 j_2} \cdots W_{j_{k-1} j_k}, 
\end{equation*}
where $j_0 = 1$ and $j_{k} = 2$.  
Suppose that for some $0 \leq r \leq k-1$, $W_{j_r j_{r+1}}$ is the last term where $j_r = j_0 = 1$. It is straightforward to see that $1$ appears an odd number of times in the sequence $(1,j_1), (j_1, j_2), \cdots, (j_r, j_{r+1})$ since if $j_s = 1$ for some $1 \leq s \leq r$, then $1$ appears both in $(j_{s-1}, j_s)$ and $(j_s, j_{s+1})$, while $1$ also appears in $(j_0, j_1) = (1, j_1)$. 
For 
\begin{equation*}
    \E W_{j_0 j_1} W_{j_1 j_2} \cdots W_{j_{k-1} j_k}
\end{equation*}
to be nonzero, each $W_{j_r j_{r+1}}$ or $W_{j_{r+1} j_{r}}$ must appear an even number of times by the symmetric condition in Assumption~\ref{assump:1}. 
Thus, for $\E W_{j_0 j_1} W_{j_1 j_2} \cdots W_{j_{k-1} j_k}$ to be nonzero, every index must also appear an even number of times. 
Since $1$ appears an odd number of times, we have 
\begin{equation*}
    \E \ve_1^\top \mW^k \ve_2 = 0
\end{equation*}
and Equation~\eqref{eq:ei-ej-mean-zero} follows similarly. 

Thus, for any fixed vector $\vx, \vy \in \R^n$, we have 
\begin{equation}\label{eq:middle-step}
\begin{aligned}
    \E \left[\vx^\top \mW^k \vy\right] &= \sum_{i=1}^n x_i y_i \E \left[\ve_i^\top \mW^k \ve_i\right] + \sum_{1\leq i\neq j\leq n} x_i y_j \E \left[\ve_i^\top \mW^k \ve_j\right]\\
    &= \sum_{i=1}^n x_i y_i \E \left[\ve_i^\top \mW^k \ve_i\right],
\end{aligned}
\end{equation}
and Equation~\eqref{eq:Poff-k} follows immediately. 
When $\mW$ has i.i.d.~on-diagonal and i.i.d.~off-diagonal entries, $\E \left[\ve_i^\top \mW^k \ve_i\right]$ are all the same for all $i \in [n]$ and we have 
\begin{equation*}
    \E \left[\ve_i^\top \mW^k \ve_i\right] = \frac{1}{n} \E \tr(\mW^k).
\end{equation*}
As a result, Equation~\eqref{eq:iid-k-moment} follows from the above display immediately.
Applying the Cauchy-Schwarz inequality to Equation~\eqref{eq:middle-step}, we have
\begin{equation*}
\begin{aligned}
    \left|\sum_{i=1}^n x_i y_i \E \left[\ve_i^\top \mW^k \ve_i\right]\right| &\leq \sum_{j=1}^n \frac{1}{2}\left(x_j^2 + y_j^2\right) \cdot \max_{i\in[n]} \left|\E \left[\ve_i^\top \mW^k \ve_i\right]\right|\\
    &\lesssim C_{k / 2}\left(\sigma^2 n\right)^{k / 2},
\end{aligned}
\end{equation*}
which yields Equation~\eqref{eq:on-diag}, completing the proof. 
\end{proof}

\section{PROOFS FOR SECTION~\ref{sec:applications}}
In this section, we provide proofs for the theoretical results derived in Section~\ref{sec:applications}.  

\subsection{Proof of Theorem~\ref{thm:linear-form}}

\begin{proof}
    The proof follows largely from the proof of Theorem 3 in \cite{chen2021asymmetry}.
    We outline a few key steps here.
    In view of Lemma~\ref{lem:W-op}, we have 
    \begin{equation*}
        \|\mH\| \lesssim \max \left\{B \log n, \sqrt{n \sigma^2 \log n}\right\}
    \end{equation*}
    with probability at least $1 - O(n^{-20})$, which together with condition~\eqref{eq:lambdastar:assump} in Theorem~\ref{thm:linear-form}, 
    and Lemma~\ref{lem:lambda-asymm} in Section~\ref{sec:aux} yields that $\|\mH\| \leq \lambda /3$ and 
    \begin{equation} \label{eq:lambda-bound}
        \frac{ 1 }{ 2 } \leq \left| \frac{ \lambda }{ \lambdastar } \right| \leq 2.
    \end{equation} 
    Therefore,
    \begin{equation}\label{eq:high-order-geometric}
    \begin{aligned}
        \sum_{k: k \geq 20\log n} \left(\frac{\|\mH\|}{\lambda}\right)^k &\lesssim \frac{\|\mH\|}{|\lambdastar|} \sum_{k: k\geq 20\log n - 1} \left(\frac{\|\mH\|}{\lambda}\right)^k \leq \frac{\|\mH\|}{|\lambdastar|} \sum_{k: k\geq 20\log n - 1} \left(\frac{1}{3}\right)^k\\
        &\lesssim \left|\lambdastar\right|^{-1} \max \left\{B \log n, \sigma \sqrt{n \log n}\right\} \cdot n^{-10}. 
    \end{aligned}
    \end{equation}
    By Theorem~\ref{thm:neumann}, we have
    \begin{equation} \label{eq:intermediate}
        \left|\va^\top \left(\vu - \frac{\vustart \vu}{\lambda/\lambdastar} \vustar\right)\right| = \left|\frac{\vustart \vu}{\lambda/\lambdastar} \sum_{k=1}^{\infty} \frac{\va^\top \mH^k \vustar}{\lambda^k}\right|
        \lesssim \sum_{k=1}^{20 \log n} \frac{1}{\lambda^k}\left|\va^{\top} \mH^k \vustar\right|+\sum_{k=20 \log n}^{\infty}\left(\frac{\|\mH\|}{\lambda}\right)^k.
    \end{equation}
    Combining Theorem~\ref{thm:main} with Lemma~\ref{lem:single-moment-bound}
    By condition~\eqref{eq:B-sigma-bound}, there exists an integer $k_0 \geq 0$ such that 
    \begin{equation*}
        (\sigma^2 n \log^3 n)^{k/2} \geq (B \log^3 n)^k \mu^{1/2} \|\va\|_{\infty},
    \end{equation*}
    holds for all $k \geq k_0$, and it suffices to take 
    \begin{equation*}
        k_0 = \left\lceil\frac{\log \mu + 2\log \|\va\|_{\infty}}{2\log \left(\sigma/B\right) +  \log n - 3\log \log n}\right\rceil. 
    \end{equation*}
    Since $\mu \leq n$ and $\log (\sigma / B) \gg -\log n$, we have 
    \begin{equation} \label{eq:k0:O1}
        k_0 = O(1). 
    \end{equation}
    Therefore, applying the bound in Equation~\eqref{eq:xHky-bound}, the first right-hand term in Equation~\eqref{eq:intermediate} is with probability at least $1 - \Otilde(n^{-40})$ bounded by 
    \begin{equation*}
    \begin{aligned}
        \sum_{k=1}^{20\log n} & \max \left\{\left(\frac{c_2^2\sigma^2 n \log^3 n}{\lambda^2}\right)^{k / 2} \sqrt{\frac{1}{n}}, \quad \left(\frac{c_2 B \log^3 n}{\lambda}\right)^k \sqrt{\frac{\mu}{n}} \|\va\|_{\infty}\right\}(\log n)^2\\
        & ~ = \left(\sum_{k\leq k_0} \left(\frac{c_2 B \log^3 n}{\lambda}\right)^k \sqrt{\frac{\mu}{n}} \|\va\|_{\infty} + \sum_{k = k_0+1}^{20\log n} \left(\frac{c_2^2\sigma^2 n \log^3 n}{\lambda^2}\right)^{k / 2} \sqrt{\frac{1}{n}}\right) (\log n)^2\\
        & ~ \lesssim \frac{c_2 B \log^5 n}{|\lambdastar|}\sqrt{\frac{\mu}{n}} \|\va\|_{\infty} + \left(\frac{c_2\sigma \sqrt{n \log^3 n}}{|\lambdastar|}\right)^{k_0} \frac{\log^2 n}{\sqrt{n}},
    \end{aligned}
    \end{equation*}
    where the first equality follows from the definition of $k_0$ and the last inequality follows from Equation~\eqref{eq:lambda-bound}. 
    Using the above bound and Equation~\eqref{eq:high-order-geometric}, we continue from Equation~\eqref{eq:intermediate} and obtain
    \begin{equation*}
        \left|\va^\top \left(\vu - \frac{\vustart \vu}{\lambda/\lambdastar} \vustar\right)\right| \lesssim
        \max\Biggl\{\frac{B \log^5 n}{|\lambdastar|} \sqrt{\frac{\mu}{n}} \|\va\|_{\infty} , \left(\frac{\sigma \sqrt{n \log^3 n}}{|\lambdastar|}\right)^{k_0} \frac{\log^2 n}{\sqrt{n}} \Biggr\} + \frac{\max \left\{B \log n, \sigma \sqrt{n \log n}\right\}}{|\lambdastar| n^{10}},
    \end{equation*}
    where the inequality follows from the na\"ive bound $a+b \leq 2\max\{a, b\}$ and the fact that $k_0 = O(1)$ in Equation~\eqref{eq:k0:O1}.
    Noting that the last term in the above display is of smaller order compared to the first term concludes the proof. 
\end{proof}

\subsection{Proof of Corollary \ref{cor:eigenvalue:one}}
\begin{proof}
    Taking $\va = \vustar$ in Theorem~\ref{thm:linear-form}, we have 
    \begin{equation} \label{eq:a-ustar}
        \left|\vustart\vu\right| \left|1 - \frac{1}{\lambda/\lambdastar}\right| \lesssim \frac{c_2 B\mu \log^5 n}{n\left|\lambdastar\right|} + \left(\frac{c_2 \sigma \sqrt{n} \log^{3/2} n}{\left|\lambdastar\right|}\right)^{k_0} \frac{\log^2 n}{\sqrt{n}}.
    \end{equation}
    From Lemma~\ref{lem:W-op}, and condition~\eqref{eq:lambdastar:assump} in Theorem~\ref{thm:linear-form}, we have
    \begin{equation*}
        \frac{ \|\mH\| }{ |\lambdastar| } \leq \frac{ 1 }{ 4 }
    \end{equation*}
    holds with probability at least $1 - O(n^{-20})$. 
    Taking the above bound into Equation~\eqref{eq:inner-prod} in Lemma~\ref{lem:inner-product} stated in Section~\ref{sec:aux}, we obtain $|\vustart\vu| \geq 1/2$. 
    Combined with Equation~\eqref{eq:lambda-bound}, we have
    \begin{equation*}
        |\lambda - \lambdastar| = \frac{|\lambdastar|}{\left|\vustart\vu\right|} \cdot \frac{|\lambda|}{|\lambdastar|} \left( \left|\vustart\vu\right| \left|1 - \frac{1}{\lambda/\lambdastar}\right| \right)
        \lesssim \frac{c_2 B\mu \log^5 n}{n} + \left(\frac{c_2 \sigma \sqrt{n} \log^{3/2} n}{\left|\lambdastar\right|}\right)^{k_0} \frac{|\lambdastar|\log^2 n}{\sqrt{n}},
    \end{equation*}
    where the last inequality follows from Equation~\eqref{eq:a-ustar}. 
    Applying a na\"ive bound $a+b \leq 2\max\{a, b\}$ and noting that $k_0 = O(1)$ by Equation~\eqref{eq:k0:O1} completes the proof.
\end{proof}

\subsection{Proof of Theorem~\ref{thm:linear-form-rank-r}}

\begin{proof}
    The proof follows largely from the proof of Theorem 4 in \cite{chen2021asymmetry}.
    By Lemma~\ref{lem:lambda-asymm} (proved below in Section~\ref{sec:aux}), one has
    \begin{equation*}
        |\lambda_l| \geq |\lambdastar_{\min}| - \|\mH\| = \kappa^{-1} |\lambdastar_{\max}| - \|\mH\|.
    \end{equation*}
    Combined with Lemma~\ref{lem:W-op}, and condition~\eqref{eq:lambdastar-max} in Theorem~\ref{thm:linear-form-rank-r}, it holds for all $l \in [r]$ with probability at least $1 - O(r n^{-20})$ that
    \begin{equation*}
        \frac{ \|\mH\| }{ |\lambda_l| } \leq \frac{ 1 }{ 3 },
    \end{equation*}
    and
    \begin{equation} \label{eq:lambda_l:ratio}
       \frac{ 1 }{ 2\kappa } \leq \frac{ |\lambda_l| }{ |\lambdastar_{\max}| } \leq 2.
    \end{equation}
    By the Neumman trick (Theorem~\ref{thm:neumann}) and Equation (50) in \cite{chen2021asymmetry}, we have
    \begin{equation}\label{eq:neumman-trick-step}
    \begin{aligned}
        \left|\va^\top \vu_l - \sum_{j=1}^r \frac{\lambdastar_j \vustart_j \vu_l}{\lambda_l} \va^\top \vustar\right| &\leq \left(\sum_{j=1}^r \frac{|\lambdastar_j|}{|\lambda_l|} \left|\vustart_j \vu_l\right|\right) \left\{\max_{j \in [r]} \sum_{k=1}^\infty \frac{\left|\va^\top \mH^k \vustar_j\right|}{|\lambda_l|^k}\right\}\\
        &\leq  \frac{\sqrt{r}|\lambdastar_1|}{|\lambda_l|} \cdot \left\{ \max_{j\in[r]} \sum_{k=1}^\infty \frac{\left|\va^\top \mH^k \vustar_j\right|}{|\lambda_l|^k} \right\}.
    \end{aligned}
    \end{equation}
    Following the same proof as Theorem~\ref{thm:linear-form}, 
    we have for all $j \in [r]$, 
    \begin{equation*}
        \sum_{k=20\log n}^\infty \left(\frac{\|\mH\|}{|\lambda_l|}\right)^k \lesssim \frac{ \kappa }{ |\lambdastar_{\max}| } \max \{B \log n, \sigma \sqrt{n \log n}\} \cdot n^{-10}. 
    \end{equation*} 
    and
    \begin{equation*}
    \begin{aligned}
        \sum_{k=1}^{20\log n} \frac{1}{|\lambda_l|^k} \left|\va^\top \mH^k \vustar_j\right| &\lesssim \frac{B \log ^5 n}{|\lambda_l|} \sqrt{\frac{\mu}{n}} \|\va\|_{\infty} + \left(\frac{\sigma \sqrt{n \log ^3 n}}{\left|\lambda_l\right|}\right)^{k_0} \frac{\log^2 n}{\sqrt{n}}\\
        &\leq \frac{\kappa B \log^5 n}{\lambdastar_{\max}} \sqrt{\frac{\mu}{n}} \|\va\|_{\infty} + \left(\frac{\kappa \sigma \sqrt{n \log ^3 n}}{\lambdastar_{\max}}\right)^{k_0}\frac{\log^2 n}{\sqrt{n}}
    \end{aligned}
    \end{equation*}
    where the last inequality follows from Equation~\eqref{eq:lambda_l:ratio}.
    Taking the above two displays into Equation~\eqref{eq:neumman-trick-step} and noting that $\lambdastar_{\max} = \kappa \lambdastar_{\min}$ yields the bound in Equation~\eqref{eq:master:rank-r}, 
    which completes the proof. 
\end{proof}

\subsection{Proof of Theorem~\ref{thm:eigenvalue:rank-r}}

\begin{proof}
Our proof follows largely from Section B.1 in \cite{cheng2021tackling}, with some minor differences. 
In particular, whenever possible, we use our improved bound in Theorem~\ref{thm:main}.
We only outline the proof here and refer the reader to Section B.1 in \cite{cheng2021tackling} for further details.

The proof leverages results from complex analysis and random matrix theory.
Define 
\begin{equation*}
    f(z) := \operatorname{det}\left(\mI + \mUstart (\mH - z\mI)^{-1} \mUstar \mLambdastar\right),
\end{equation*}
\begin{equation*}
    g(z) := \operatorname{det}\left(\mI + \mUstart (-z)^{-1} \mUstar \mLambdastar\right).
\end{equation*}
These two functions are commonly used to locate the eigenvalues of $\mM$ \citep{tao2013outliers, o2023matrices, wang2024analysis}. 
Indeed, one can see such a relation from Claim~\ref{claim:1}:
\begin{claim}\label{claim:1}
    If $\lambdastar_{\max} > 2\kappa \|\mH\|$, then the zeros of $f$ (resp.~$g$) on the region $\calK := \left\{z \in \bbC: |z| > \|\mH\|\right\} \cup \{\infty\}$ are exactly the $r$ leading eigenvalues of $\mM$ (resp.~$\mMstar$). 
\end{claim}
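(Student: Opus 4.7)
My plan is to reduce the claim to two ingredients: (i) the Weinstein--Aronszajn (Sylvester) determinant identity, which converts zeros of the $r \times r$ determinants $f$ and $g$ into eigenvalues of the full $n \times n$ matrices $\mM$ and $\mMstar$; and (ii) a counting argument based on the Bauer--Fike inequality together with continuity of the roots of the characteristic polynomial, to verify that exactly $r$ such eigenvalues land in $\calK$ and that they are the leading ones.

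First, I would apply Sylvester's identity $\det(\mI_r + \mA\mB) = \det(\mI_n + \mB\mA)$ to obtain
\begin{equation*}
    \det(\mM - z \mI) = f(z)\,\det(\mH - z\mI)
    \quad \text{and} \quad
    \det(\mMstar - z\mI) = (-z)^n\, g(z),
\end{equation*}
the first valid whenever $z$ is not in the spectrum of $\mH$, and the second whenever $z \ne 0$. On $\calK$ we have $|z| > \|\mH\|$, which is at least the spectral radius of $\mH$, so $\mH - z\mI$ is invertible; moreover $z \ne 0$. Combined with the limit $f(\infty) = g(\infty) = 1$ (obtained by expanding $(\mH - z\mI)^{-1}$ as a Neumann series in $1/z$), this shows that the zeros of $f$ (resp.~$g$) on $\calK$, counted with multiplicity, are exactly the eigenvalues of $\mM$ (resp.~$\mMstar$) lying in $\calK$.

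Second, I would count those eigenvalues. For $\mMstar$ this is immediate: its nonzero eigenvalues are $\lambdastar_1, \ldots, \lambdastar_r$, each with $|\lambdastar_i| \geq \lambdastar_{\min} = \lambdastar_{\max}/\kappa > 2\|\mH\|$, so all $r$ lie in $\calK$, while the remaining $n-r$ equal $0 \notin \calK$. For $\mM$ I would deploy a homotopy argument along $\mM_t := \mMstar + t\mH$, $t \in [0,1]$. Since $\mMstar$ is symmetric (hence normal), the Bauer--Fike theorem shows that every eigenvalue of $\mM_t$ lies within distance $t\|\mH\|$ of some eigenvalue of $\mMstar$, and hence either in $\bigcup_i \overline{D(\lambdastar_i, t\|\mH\|)} \subset \calK$ or in $\overline{D(0, t\|\mH\|)} \subset \bbC \setminus \calK$. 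Under $\lambdastar_{\min} > 2\|\mH\|$ these two clusters are separated by an annulus of positive width. For any sufficiently small $\epsilon > 0$, the circle $\gamma := \{|z| = \|\mH\| + \epsilon\}$ therefore contains no eigenvalue of any $\mM_t$. By continuity of the roots of the characteristic polynomial in its coefficients (equivalently, by the argument principle applied to $z \mapsto \det(\mM_t - z\mI)$ on $\gamma$), the number of eigenvalues of $\mM_t$ outside $\gamma$ is constant in $t$; at $t = 0$ this number is $r$. Hence $\mM$ has exactly $r$ eigenvalues outside $\gamma$, and the Bauer--Fike bound at $t = 1$ places them in $\bigcup_i \overline{D(\lambdastar_i, \|\mH\|)} \subset \calK$. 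These are necessarily the $r$ leading eigenvalues of $\mM$, since every other eigenvalue has magnitude at most $\|\mH\|$.

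The main obstacle I anticipate is the clean execution of the homotopy step: one must verify that the contour $\gamma$ can be chosen so that no eigenvalue of any $\mM_t$, $t \in [0,1]$, lies on $\gamma$. This is precisely where the strict inequality $\lambdastar_{\max} > 2\kappa \|\mH\|$ (equivalently $\lambdastar_{\min} > 2\|\mH\|$) enters, since without it the two Bauer--Fike clusters could touch and the eigenvalue count in $\calK$ could change along the homotopy. The remaining steps (Sylvester's identity and the limit calculations at $\infty$) are routine.
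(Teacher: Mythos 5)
Your proof is correct and uses the same key ingredient the paper points to (Sylvester's identity $\det(\mI+\mA\mB)=\det(\mI+\mB\mA)$), so the identification of the zeros of $f$ and $g$ on $\calK$ with the eigenvalues of $\mM$ and $\mMstar$ there matches the paper's stated approach; the paper itself defers the counting details to \cite{cheng2021tackling}, while you supply them directly. Your counting via Bauer--Fike for normal matrices combined with a homotopy $\mM_t = \mMstar + t\mH$ and the argument principle on a circle $\gamma = \{|z| = \|\mH\|+\epsilon\}$ is a clean and self-contained way to show that exactly $r$ eigenvalues of $\mM$ land in $\calK$ and that these are the $r$ of largest modulus; the strict inequality $\lambdastar_{\max} > 2\kappa\|\mH\|$ (equivalently $\lambdastar_{\min} > 2\|\mH\|$) is used exactly where it should be, to separate the Bauer--Fike clusters so $\gamma$ avoids the spectrum of every $\mM_t$.
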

As a remark, Claim~\ref{claim:1} is established using the identity $\operatorname{det}(\mI+\mA \mB)=\operatorname{det}(\mI+\mB \mA)$. 
The details are in Section B.1 in \cite{cheng2021tackling}.

Let $\calD(\gamma)$ be a $\gamma$-neighborhood of $\{\lambdastar_1, \lambdastar_2, \cdots, \lambdastar_r\}$ defined as
\begin{equation*}
    \calD(\gamma) := \bigcup_{k=1}^r \calB(\lambdastar_k, \gamma),
\end{equation*}
where $\calB(\lambda, \gamma):= \{z\in \bbC: |z-\lambda| \leq \gamma\}$. 
A key step to locate the eigenvalues of $\mM$ is to show that in each connected component (in the topological sense) of $\calD(\gamma)$, functions $f$ and $g$ have the same number of zeros. 
This is often established through Rouch\'{e}s theorem. 

\begin{theorem}[Rouch\'{e}'s theorem]\label{thm:Rouche}
    Let $f$ and $g$ be two complex-valued functions that are holomorphic inside a region $\calR$ with closed contour $\partial \calR$. 
    If $|f(z) - g(z)| \leq |g(z)|$ for all $z \in \partial \calR$, then $f$ and $g$ have the same number of zeros inside $\calR$. 
\end{theorem}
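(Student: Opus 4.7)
The plan is to invoke the argument principle from complex analysis. Recall that for a meromorphic function $h$ on a domain containing $\calR$ with no zeros or poles on the contour $\partial\calR$, the integral $\frac{1}{2\pi i}\oint_{\partial\calR} \frac{h'(z)}{h(z)}\,dz$ equals $N(h) - P(h)$, the number of zeros minus the number of poles of $h$ inside $\calR$ counted with multiplicity, and equivalently equals the winding number of the image curve $h(\partial\calR)$ about the origin. So the strategy is to build a meromorphic $h$ from $f$ and $g$ whose zero count inside $\calR$ is $N(f)$, whose pole count is $N(g)$, and whose image winding number about $0$ is forced to be zero by the hypothesis.

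First I would verify that the hypothesis, in its useful strict form $|f(z)-g(z)| < |g(z)|$ on $\partial\calR$, forces both $g(z) \neq 0$ and $|f(z)| \geq |g(z)| - |f(z)-g(z)| > 0$ on the contour, so neither function vanishes on $\partial\calR$ and the counting statement is well posed. I would then take $h := f/g$, which is meromorphic on $\calR$ with zeros exactly at the zeros of $f$ and poles exactly at the zeros of $g$. The decisive observation is that the hypothesis rewrites as $|h(z)-1| < 1$ on $\partial\calR$, so $h(\partial\calR)$ lies inside the open disk of radius one centered at $1$, which is simply connected and avoids the origin. Any closed curve in such a region has winding number zero about $0$, and the argument principle applied to $h$ yields $N(f) - N(g) = 0$, which is the conclusion.

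The only real subtlety, and the main obstacle to a one-line proof, is that the statement as quoted permits equality $|f-g| \leq |g|$, under which the boundary condition can be saturated and the zero-counting becomes ambiguous when $g$ vanishes on $\partial\calR$. The standard resolution is either a small inward perturbation of $\calR$ to obtain strict inequality followed by continuity of the integer-valued winding number, or a homotopy argument: set $f_t := (1-t)g + tf$ for $t\in[0,1]$ and observe $|f_t(z)| \geq |g(z)| - t|f(z)-g(z)| > 0$ on $\partial\calR$ whenever $t<1$, so the number of zeros of $f_t$ inside $\calR$ is constant in $t$ by the argument principle. Everything else reduces to routine complex analysis; for the purposes of the enclosing proof of Theorem~\ref{thm:eigenvalue:rank-r} one may equivalently quote the theorem under the strict-inequality form, which is what will actually be verified on the contour of interest.
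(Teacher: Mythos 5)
The paper does not prove this statement: Rouch\'{e}'s theorem is quoted as a classical fact and used without proof in the argument for Theorem~\ref{thm:eigenvalue:rank-r}. Your proof is the standard textbook argument via the argument principle, and it is correct: setting $h = f/g$, the hypothesis (in its strict form) places $h(\partial\calR)$ inside the open disk $D(1,1)$, which is simply connected and avoids the origin, so the winding number of $h(\partial\calR)$ about $0$ vanishes and the argument principle gives $N(f)-N(g)=0$. Your observation about the non-strict inequality ``$\le$'' in the paper's statement is a fair one --- as written the statement can be ill-posed if $g$ vanishes on $\partial\calR$ (which then forces $f$ to vanish there too) --- and your two proposed fixes, an inward perturbation of $\calR$ and the homotopy $f_t=(1-t)g+tf$, are both standard and correct. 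In the context where the paper actually applies the theorem (inside the proof of Theorem~\ref{thm:eigenvalue:rank-r}), the bound established is $\|\mDelta\| < 1/(2r)$, which yields the strict inequality $|f(z)-g(z)| < |g(z)|$ on the relevant contour, so no issue arises in the application.
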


By Rouch\'{e}'s theorem, it suffices to show that $|f(z) - g(z)| < |g(z)|$ on $\partial \calD(\gamma)$. 
In view of Lemma~\ref{lem:W-op}, and condition~\eqref{eq:lambdastar-max} in Theorem~\ref{thm:linear-form-rank-r}, we have 
\begin{equation} \label{eq:H-op:kappa}
    \|\mH\| \leq \frac{ \lambdastar_{\max} }{4 \kappa }
\end{equation}
with probability at least $1 - O(n^{-20})$. 
For the moment, we assume that the following bounds hold for $\gamma$:
\begin{equation} \label{eq:gamma-condition}
    \gamma <  \frac{ \lambdastar_{\max} }{4 \kappa } \quad \text{and} \quad \gamma < \frac{ \Deltastar_l }{ 2 }.
\end{equation}
Under Equations~\eqref{eq:H-op:kappa} and~\eqref{eq:gamma-condition}, it follows that for all $z \in \calD(\gamma)$,
\begin{equation*}
    |z| \geq \frac{ \lambdastar_{\max} }{ \kappa } - \gamma > \|\mH\|.
\end{equation*}
As a result, we are able to apply the Neumann series $(\mH-z\mI)^{-1} = -\sum_{k=0}^\infty z^{-k-1} \mH^k$ and obtain
\begin{equation*}
    f(z) = g(z)\operatorname{det} \Bigg(\mI - {(\mI-\mLambdastar/z)^{-1} \sum_{k=1}^\infty z^{-k-1} \mUstart \mH^k \mUstar \mLambdastar}\Bigg).
\end{equation*}
Defining
\begin{equation*}
\mDelta = (\mI-\mLambdastar/z)^{-1} \sum_{k=1}^\infty z^{-k-1} \mUstart \mH^k \mUstar \mLambdastar
\end{equation*}
for ease of notation, one can verify that showing $|f(z) - g(z)| < |g(z)|$ on $\partial \calD(\gamma)$ is equivalent to showing that 
\begin{equation*}
    \left|\operatorname{det}\left(\mI - \mDelta\right) - 1\right| < 1,
\end{equation*}
which holds as long as $\|\mDelta\| < 1/(2r)$. 
As a remark, similar perturbation results on determinants can be found in \cite{ipsen2008perturbation}. 
By Equation (78) in \cite{cheng2021tackling}, one has
\begin{equation*}
    \|\mDelta\| \leq \frac{\lambda_{\max } r}{\gamma} \sum_{k=1}^{\infty} \frac{\max _{1 \leq i, j \leq r}\left|\vu_i^{\star \top} \mH^k \vu_j^{\star}\right|}{\left(\frac{3}{4} \lambda_{\min }\right)^k}.
\end{equation*}
Following the same proof as Theorem~\ref{thm:linear-form-rank-r} and applying a union bound over $i,j \in [r]$ to the above display, we have 
\begin{equation*}
\begin{aligned}
    \|\mDelta\| &\lesssim \frac{\lambdastar_{\max } r}{\gamma} \left(\frac{\kappa \mu B \log ^3 n}{n\lambdastar_{\max }} +\left(\frac{c_2 \kappa \sigma \sqrt{n \log ^3 n}}{\lambdastar_{\max }}\right)^{k_0} \sqrt{\frac{1}{n}}\right)\\
    &\leq \gamma^{-1}\left(\frac{\kappa \mu r B \log ^3 n}{n} +\left(\frac{c_2 \kappa \sigma \sqrt{n \log ^3 n}}{\lambdastar_{\max }}\right)^{k_0} \lambdastar_{\max } r\sqrt{\frac{1}{n}}\right)
\end{aligned}
\end{equation*}
holds with probability at least $1 - O(n^{-20}r^2)$ under condition~\eqref{eq:lambdastar-max}. 
Taking 
\begin{equation*}
    \gamma = c_3 \left(\frac{\kappa \mu r^2 B \log ^3 n}{n} +\left(\frac{c_2 \kappa \sigma \sqrt{n \log ^3 n}}{\lambdastar_{\max }}\right)^{k_0} \lambdastar_{\max } r^2 \sqrt{\frac{1}{n}}\right)
\end{equation*}
for some constant $c_3 > 0$ sufficiently large, one can verify that Equation~\eqref{eq:gamma-condition} holds under condition~\eqref{eq:gap-condition} stated in Theorem~\ref{thm:eigenvalue:rank-r}, and condition~\eqref{eq:lambdastar-max} given in Theorem~\ref{thm:linear-form-rank-r}, and we have $\|\mDelta\| \leq 1/(2r)$ as desired.

Thus, we conclude that $f$ and $g$ have the same number of zeros in $\calD(\gamma)$. 
Following the same arguments after Remark 10 in Section B.1.1 in \cite{cheng2021tackling}, we obtain that $\lambda_l$ is real-valued and $\lambda_l \in \calB(\lambdastar_l, \gamma)$, which finishes the proof. 
\end{proof}

\section{TECHNICAL LEMMAS FOR THEOREM~\ref{thm:IExHky-moment}} \label{sec:thm2:tech}
In this section, we provide technical lemmas for Theorem~\ref{thm:IExHky-moment} and their detailed proof.
To begin with, we provide more discussion on the graph $G$ introduced in the proof of Theorem~\ref{thm:IExHky-moment}.
We remind the reader that the graph $G$ has $pk$ vertices identified with elements of the set $\calI$ given in Equation~\eqref{eq:def:calI}. 
Given $\mJ\in [n]^{p(k+1)}$, the edges of $G$ are constructed according to Rule~\ref{rule:(i)}, and we define the mapping $\psi:[n]^{p(k+1)} \to \calG(\calI)$ given by
\begin{equation}\label{eq:psi-define}
    \psi(\mJ) = G.
\end{equation}

An example graph 
constructed from the Rule~\ref{rule:(i)} when $\mJ\in[n]^{p(k+1)}$ for $p=4$ and $k=2$ satisfying the constraints
\begin{equation*}
    \left(j^{(1)}_{1}, j^{(1)}_{2}\right) = \left(j^{(2)}_{2}, j^{(2)}_{3}\right), \left(j^{(2)}_{1}, j^{(2)}_{2}\right) = \left(j^{(4)}_{2}, j^{(4)}_{3}\right), \left(j^{(1)}_{2}, j^{(1)}_{3}\right) = \left(j^{(3)}_{1}, j^{(3)}_{2}\right)
\end{equation*}
and 
\begin{equation*}
    \left(j^{(3)}_{2}, j^{(3)}_{3}\right) = \left(j^{(4)}_{1}, j^{(4)}_{2}\right)
\end{equation*}
is shown in Figure~\ref{fig:example}. 
\begin{figure}[ht]
    \centering
    \begin{tikzpicture}[scale=1, every node/.style={draw, circle, minimum size=0.8cm}]
        \node (v11) at (0, 0) {$(1,1)$};
        \node (v12) at (2, 0) {$(2,1)$};
        \node (v13) at (4, 0) {$(3,1)$};
        \node (v14) at (6, 0) {$(4,1)$};
        
        \node (v21) at (0, -2) {$(1,2)$};
        \node (v22) at (2, -2) {$(2,2)$};
        \node (v23) at (4, -2) {$(3,2)$};
        \node (v24) at (6, -2) {$(4,2)$};
        
        \draw (v11) -- (v22);
        \draw (v12) -- (v24);
        \draw (v13) -- (v21);
        \draw (v14) -- (v23);
    \end{tikzpicture}
    \caption{A graph created following Rule~\ref{rule:(i)} when $k = 2$ and $p = 4$.}\label{fig:example}
\end{figure}
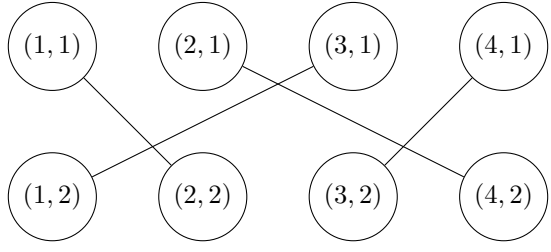
This example shows that different $\mJ$ can correspond to the same graph $G$, as long as $\mJ$ satisfies the constraints imposed by $G$. 
One concrete example of $\mJ$ satisfying the above constraints would be $\vjj{1} = (1,2,3), \vjj{2} = (4,1,2), \vjj{3} = (2,3,4), \vjj{4} = (3,4,1)$, so we have $\E \prod_{r=1}^4 (\vzeta_{\vjj{r}} - \E \vzeta_{\vjj{r}})$  given by
\begin{equation*}
\begin{aligned}
     \E \! \left[(H_{12}H_{23} \!-\! \E H_{12}H_{23})(H_{41}H_{12} \!-\! \E H_{41}H_{12})(H_{23}H_{34} \!-\! \E H_{23}H_{34})(H_{34}H_{41} \!-\! \E H_{34}H_{41})\right]
    = \E \!\left[H_{12}^2 H_{23}^2 H_{34}^2 H_{41}^2\right]. 
\end{aligned}
\end{equation*}
It is straightforward to see that the choice $\vjj{1} = (5,6,1), \vjj{2} = (4,5,6), \vjj{3} = (6,1,4), \vjj{4} = (1,4,5)$ would also produce the same $G$, which corresponds to $\E[H_{56}^2 H_{61}^2 H_{14}^2 H_{45}^2 ]$. 

We will demonstrate that the graph $G$ effectively groups similar $\mJ$ together, in the sense that for all $\mJ \in \psi^{-1}(G)$, the terms given by Equation~\eqref{eq:single-moment} all share the same upper in the noise parameter $\sigma$ and $B$. 
Moreover, this upper bound only depends on $c(G)$, the number of connected components in $G$ (Lemmas~\ref{lem:G-cc-size-2} and~\ref{lem:single-moment-bound}).
We then bound the number of graphs that have the same number of connected components in Lemma~\ref{lem:G-cc-count}.

Before we proceed, we introduce one extra notation. 
Note that every vertex $(r,l) \in \calI$ in the same connected component $G_\ell$ of $G$, 
$H_{j_{l}^{(r)} j_{l+1}^{(r)}}$ must all correspond to the same entry in $\mH$ by Rule~\ref{rule:(i)}. 
Denote this entry as 
\begin{equation}\label{eq:phi:define}
    H_{\phi(G_\ell)}, \quad \phi(G_{\ell}) \in [n] \times [n]. 
\end{equation}

\subsection{Details of Technical Lemmas~\ref{lem:G-cc-size-2}, ~\ref{lem:single-moment-bound} and~\ref{lem:G-cc-count}} \label{sec:moment-bounds}

We introduce the following condition:
\begin{condition}\label{cond:graph:I}
    The size of each connected component of the graph $G$ is an even number.  
\end{condition}
Lemma~\ref{lem:G-cc-size-2} shows that Condition~\ref{cond:graph:I} holds for all $G = \psi(\mJ)$ such that Equation~\eqref{eq:single-moment} is nonzero.

\begin{lemma} \label{lem:G-cc-size-2}
    Recall that $\vzeta_{\vjj{r}}$ is defined in Equation~\eqref{eq:vzeta-vj-define}. 
    For Equation~\eqref{eq:single-moment} to be nonzero, every connected component in $G = \psi(\mJ)$ must have a size of even number, where $\psi(\mJ)$ is defined in Equation~\eqref{eq:psi-define}. 
\end{lemma}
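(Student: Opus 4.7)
The plan is to expand the centered product as a signed sum over subsets $S \subseteq [p]$ of rows, factor each resulting expectation across the distinct entries of $\mH$ by independence, and then use the symmetry of each $H_{ij}$ to kill any term in which some entry is raised to an odd power. The final parity bookkeeping then pins down the size of each connected component of $G$.

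Writing $X_r := \vzeta_{\vjj{r}} - \E \vzeta_{\vjj{r}}$, I would begin with the binomial-style expansion
\begin{equation*}
    \E \prod_{r=1}^{p} X_r \;=\; \sum_{S \subseteq [p]} (-1)^{p-|S|}\, \E\!\left[\prod_{r \in S} \vzeta_{\vjj{r}}\right] \prod_{r \notin S} \E \vzeta_{\vjj{r}}.
\end{equation*}
For each subset $S$, I would invoke Assumption~\ref{assump:1} together with Equation~\eqref{eq:phi:define}, which says that every connected component $G_\ell$ corresponds to a single entry $H_{\phi(G_\ell)}$ of $\mH$. Letting $n_{\ell, T}$ denote the number of vertices of $G_\ell$ lying in rows indexed by $T \subseteq [p]$, independence across distinct entries of $\mH$ gives the factorizations
\begin{equation*}
    \E\!\left[\prod_{r \in S} \vzeta_{\vjj{r}}\right] \;=\; \prod_{\ell=1}^{c(G)} \E H_{\phi(G_\ell)}^{\,n_{\ell, S}}
    \qquad \text{and} \qquad
    \E \vzeta_{\vjj{r}} \;=\; \prod_{\ell=1}^{c(G)} \E H_{\phi(G_\ell)}^{\,n_{\ell, \{r\}}}.
\end{equation*}

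By Condition~\ref{assump:symm}, $\E H_{ij}^{m} = 0$ whenever $m$ is odd, so the summand indexed by $S$ can be nonzero only if $n_{\ell, S}$ is even for every $\ell$ and simultaneously $n_{\ell, \{r\}}$ is even for every $\ell$ and every $r \notin S$. Under both conditions the total multiplicity
\begin{equation*}
    |V(G_\ell)| \;=\; n_{\ell, S} \,+\, \sum_{r \notin S} n_{\ell, \{r\}}
\end{equation*}
is a sum of even non-negative integers, hence even. Contrapositively, if some component $G_\ell$ has odd size then every term in the expansion vanishes and $\E \prod_{r=1}^p X_r = 0$, which is exactly the conclusion of the lemma.

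The only real pitfall is conceptual rather than computational: one cannot shortcut by invoking symmetry directly on the uncentered polynomial $\prod_r \vzeta_{\vjj{r}}$, because the constants $\E \vzeta_{\vjj{r}}$ introduced by centering do not flip sign under $H_{ij} \mapsto -H_{ij}$, so the parities of individual rows interact with the parity of the whole product. The subset expansion decouples the random and deterministic pieces so that parity can be accounted for component by component of $G$.
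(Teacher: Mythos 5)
Your proof is correct and uses essentially the same approach as the paper: a binomial-style expansion of the centered product, factorization of each resulting expectation across the distinct entries $H_{\phi(G_\ell)}$ by independence, and the symmetry assumption to zero out any factor raising an entry to an odd power, with the final parity bookkeeping on $|V(G_\ell)| = n_{\ell,S} + \sum_{r\notin S} n_{\ell,\{r\}}$ yielding the contradiction. The only cosmetic difference is that you expand over all subsets $S \subseteq [p]$ directly, whereas the paper first isolates a row $r_0$ with $\E\vzeta_{\vjj{r_0}}=0$ and then expands the remaining $p-1$ centered factors over $\vd\in\{0,1\}^{p-1}$; both routes reduce to the same parity argument.
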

\begin{proof}
    Suppose that $G_\ell$ is a connected component in $G$ that has an odd size.
    Recall that $\phi(G_{\ell})$ is introduced in Equation~\eqref{eq:phi:define}. 
    There must be a $r_0 \in [p]$ such that $H_{\phi(G_{\ell})}$ appears an odd number of times in $\vzeta_{\vjj{r_0}}$, and therefore,
    \begin{equation*}
        \E \vzeta_{\vjj{r_0}} = 0.
    \end{equation*}
    Hence, we can rewrite Equation~\eqref{eq:single-moment} as 
    \begin{equation*}
        \E \Bigg[\prod_{r=1}^p \left(\vzeta_{\vjj{r}} - \E \vzeta_{\vjj{r}}\right)\Bigg]
        = \E \Bigg[\vzeta_{\vjj{r_0}}\prod_{r \neq r_0} \left(\vzeta_{\vjj{r}} - \E \vzeta_{\vjj{r}}\right)\Bigg]
        = \sum_{\vd \in \{0,1\}^{p-1} } \E \left[\vzeta_{\vjj{r_0}} \prod_{r\neq r_0} F_{d_r}\left(\vzeta_{\vjj{r}}\right) \right]
    \end{equation*}
    where in the last equality, we have expanded the product in the first equality and defined
    \begin{equation*}
        F_0\left(\vzeta_{\vjj{r}}\right) = \vzeta_{\vjj{r}} \quad \text{and} \quad F_1\left(\vzeta_{\vjj{r}}\right) = -\E\vzeta_{\vjj{r}}.
    \end{equation*}
    We observe that the term 
    \begin{equation*}
        \E \left[\prod_{r\neq r_0, d_r = 1} F_1\left(\vzeta_{\vjj{r}}\right)\right] = (-1)^{\sum_{r\neq r_0} d_r} \prod_{\substack{r\neq r_0 \\ d_r = 1}} \E\vzeta_{\vjj{r}}
    \end{equation*}
    is zero unless for all $r \neq r_0$ and $d_r = 1$, the term $\vzeta_{\vjj{r}}$ contains an even number of $H_{\phi(G_{\ell})}$.
    On the other hand, the term
    \begin{equation*}
        \E\left[\vzeta_{\vjj{r_0}} \prod_{r\neq r_0, d_r = 0} F_0\left(\vzeta_{\vjj{r}}\right)\right] = \E\left[\vzeta_{\vjj{r_0}} \prod_{r\neq r_0, d_r = 0} \vzeta_{\vjj{r}}\right]
    \end{equation*}
    is zero unless $\vzeta_{\vjj{r_0}} \prod_{r\neq r_0, d_r = 0} \vzeta_{\vjj{r}}$ contains an even number of $H_{\phi(G_{\ell})}$.
    Since $G_\ell$ is of odd size by assumption, there is an odd number of $H_{\phi(G_{\ell})}$ terms in total.
    Thus, by combining the above two displays, we must have 
    \begin{equation*}
\E \left[\vzeta_{\vjj{r_0}} \prod_{r\neq r_0} F_{d_r}\left(\vzeta_{\vjj{r}}\right) \right]
=\E \left[\prod_{r\neq r_0, d_r = 1} F_1\left(\vzeta_{\vjj{r}}\right)\right]\E\left[\vzeta_{\vjj{r_0}} \prod_{r\neq r_0, d_r = 0} F_0\left(\vzeta_{\vjj{r}}\right)\right] = 0.
    \end{equation*}
    Since this must hold for all $\vd \in \{0,1\}^{p-1}$, it follows that
\begin{equation*}
    \E \Bigg[\prod_{r=1}^p \left(\vzeta_{\vjj{r}} - \E \vzeta_{\vjj{r}}\right)\Bigg] = 0. 
\end{equation*}
Thus, 
for a graph $G$ that makes Equation~\eqref{eq:single-moment} nonzero, its connected components must all have an even number size, as we set out to show.
\end{proof}

Decomposing $G = \cup_{\ell=1}^L G_{\ell}$ into its nonempty connected components, we have $1\leq L \leq pk/2$ by Lemma~\ref{lem:G-cc-size-2}. 
Lemma~\ref{lem:single-moment-bound} shows that for any graph $G$ constructed from Rule~\ref{rule:(i)}, the terms in Equation~\eqref{eq:single-moment} with $\mJ \in \psi^{-1}(G)$ all share the same upper bound, and this bound only depends on $G$ through $c(G)$, the number of connected components in $G$.  

\begin{lemma} \label{lem:single-moment-bound}
    For any graph $G$ constructed according to Rule~\ref{rule:(i)} with $c(G) = L \in [1, pk/2]$ connected components, any term in Equation~\eqref{eq:single-moment} with $\mJ \in \psi^{-1}(G)$ satisfies the bound
    \begin{equation*}
        \left|\E \Bigg[\prod_{r=1}^p \left(\vzeta_{\vjj{r}} - \E \vzeta_{\vjj{r}}\right)\Bigg]\right| \leq 2^p \sigma^{2L} B^{pk - 2L}.
    \end{equation*}
\end{lemma}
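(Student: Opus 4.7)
The plan is to expand the centered product into a sum of $2^p$ terms indexed by subsets $S \subseteq [p]$, bound each term by $\sigma^{2L} B^{pk-2L}$ via a component-wise counting argument on $G$, and collect the factor of $2^p$ at the end. Concretely, the expansion
\begin{equation*}
\prod_{r=1}^p \left(\vzeta_{\vjj{r}} - \E \vzeta_{\vjj{r}}\right) = \sum_{S \subseteq [p]} (-1)^{|S|} \prod_{r \in S} \E \vzeta_{\vjj{r}} \cdot \prod_{r \notin S} \vzeta_{\vjj{r}},
\end{equation*}
followed by taking expectations and applying the triangle inequality, reduces the problem to bounding $A_S := \prod_{r \in S} \lvert\E \vzeta_{\vjj{r}}\rvert \cdot \lvert\E \prod_{r \notin S} \vzeta_{\vjj{r}}\rvert$ by $\sigma^{2L} B^{pk-2L}$ for each $S$.

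To bound a given $A_S$, I would use that within a connected component $G_\ell$ of $G$, all associated factors of $\mH$ correspond to the same entry $H_{\phi(G_\ell)}$, and invoke independence across components. Letting $c_{r,\ell}$ denote the number of vertices of $G_\ell$ in row $r$, and setting $a_\ell^S = \sum_{r \in S} c_{r,\ell}$ and $b_\ell^S = \sum_{r \notin S} c_{r,\ell}$, independence gives
\begin{equation*}
A_S = \prod_{\ell=1}^L \left(\prod_{r \in S} \bigl|\E H_{\phi(G_\ell)}^{c_{r,\ell}}\bigr|\right) \bigl|\E H_{\phi(G_\ell)}^{b_\ell^S}\bigr|.
\end{equation*}
The symmetry assumption (Condition~\ref{assump:symm}) forces every $c_{r,\ell}$ with $r \in S$ and every $b_\ell^S$ to be either $0$ or an even integer $\geq 2$ whenever $A_S \neq 0$. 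Applying the moment bound $\E |H_{ij}|^m \leq \sigma^2 B^{m-2}$ from Equation~\eqref{eq:high-moment} to each nontrivial factor, and noting that the row and column multiplicities sum to $|S|k$ and $(p-|S|)k$ respectively, so the total exponent of $H$-factors is $pk$, yields $A_S \leq \sigma^{2C} B^{pk - 2C}$, where
\begin{equation*}
C := \sum_{\ell=1}^L \left(\bigl|\{r \in S : c_{r,\ell} \geq 2\}\bigr| + \indic\{b_\ell^S \geq 2\}\right).
\end{equation*}

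The crux is showing $C \geq L$, which, combined with $\sigma \leq B$ (which we may assume, since $|H_{ij}| \leq B$ forces $\sigma_{ij}^2 = \E H_{ij}^2 \leq B^2$), will give $A_S \leq \sigma^{2L} B^{pk-2L}$. I plan to establish $C \geq L$ component by component: by Lemma~\ref{lem:G-cc-size-2}, $s_\ell = a_\ell^S + b_\ell^S$ is even and at least $2$, and in the nonvanishing case both $a_\ell^S$ and $b_\ell^S$ are themselves even. If $a_\ell^S = 0$ then $b_\ell^S \geq 2$ and the indicator contributes $1$; if $a_\ell^S \geq 2$ then at least one $r \in S$ has $c_{r,\ell} \geq 2$ (otherwise the sum of even nonnegative integers $c_{r,\ell}$ would be zero), contributing $1$ to the row-wise count. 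This parity-plus-pigeonhole per-component argument is where I expect the most care to be needed; once it is in place, summing $A_S \leq \sigma^{2L} B^{pk - 2L}$ over the $2^p$ choices of $S$ completes the proof.
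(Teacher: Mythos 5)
Your proof is correct, but it takes a genuinely different route from the paper's, and the comparison is worth making. The paper's proof bounds $\left|\prod_{r=1}^p(\vzeta_{\vjj{r}} - \E\vzeta_{\vjj{r}})\right|$ by $\prod_r\bigl(\prod_l |H_{\jr{l}\jr{l+1}}| + \E\prod_l|H_{\jr{l}\jr{l+1}}|\bigr)$, expands into $2^p$ terms, and then uses Lyapunov's inequality $\E|X|^a \geq \E|X|^b\,\E|X|^{a-b}$ to merge, within each connected component, the product of (possibly many) fractional moments into a \emph{single} moment $\E|H_{\phi(G_\ell)}|^{\gamma_\ell}$. Since every component has even size $\gamma_\ell \geq 2$ by Lemma~\ref{lem:G-cc-size-2}, one application of the Bernstein moment bound per component yields $\sigma^2 B^{\gamma_\ell-2}$, and $\sum_\ell \gamma_\ell = pk$ finishes it. You instead keep the factored form, apply the Bernstein moment bound separately to every nontrivial factor across all components, then prove $C \geq L$ by a per-component parity-plus-pigeonhole argument, and finally use $\sigma \leq B$ to discard the slack $(\sigma/B)^{2(C-L)}$. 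Both proofs are sound; the points to note are that (i) your approach needs $\sigma \leq B$, which is indeed free under Condition~\ref{item:assump-3a} as you observe, whereas the paper's Lyapunov step renders this unnecessary; (ii) your parity bookkeeping (each $c_{r,\ell}$ for $r\in S$ and each $b_\ell^S$ is even in the nonvanishing case, so $a_\ell^S \geq 2$ forces some $c_{r,\ell}\geq 2$, and $a_\ell^S = 0$ forces $b_\ell^S \geq 2$) correctly delivers $C \geq L$, but it is replacing a single line of Lyapunov with a case analysis — the paper's route is the cleaner one to keep in mind.
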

\begin{proof}
    Applying triangle inequality to Equation~\eqref{eq:single-moment}, we have 
    \begin{equation} \label{eq:zeta-prod}
        \left|\E \Bigg[\prod_{r=1}^p \left(\vzeta_{\vjj{r}} - \E \vzeta_{\vjj{r}}\right)\Bigg]\right|
        \leq \E \Bigg[\prod_{r=1}^p \left(\prod_{l=1}^k \left|H_{\jr{l} \jr{l+1}}\right| + \E \prod_{l=1}^k \left|H_{\jr{l} \jr{l+1}}\right|\right)\Bigg]
    \end{equation}
    By Lyapunov's inequality, for any $a > b > 0$ and any random variable $X$, we have  
    \begin{equation*}
        \E |X|^{a} = \left(\E|X|^{a}\right)^{\frac{b}{a}} \left(\E|X|^{a}\right)^{\frac{a-b}{a}} \geq \E |X|^{b} \E |X|^{a-b}.
    \end{equation*}
    For any connected component $G_\ell$ of $G$, recall that all of its indices are assigned the same $\phi(G_\ell) \in [n]\times [n]$.
    Let $\gamma_\ell$ denote the size of $G_\ell$.
    Expanding Equation~\eqref{eq:zeta-prod} into $2^p$ terms, and using the decomposition $G = \cup_{\ell=1}^L G_{\ell}$ together with Lyapunov's inequality, it follows that Equation~\eqref{eq:zeta-prod} is further bounded as
    \begin{equation*}
        \left|\E \Bigg[\prod_{r=1}^p \left(\vzeta_{\vjj{r}} - \E \vzeta_{\vjj{r}}\right)\Bigg]\right|
        \le 2^p \E \Bigg[\prod_{r=1}^p \prod_{l=1}^k \left|H_{\jr{l} \jr{l+1}}\right|\Bigg]
        = 2^p \prod_{\ell=1}^L \E \left|H_{\phi(G_\ell)}\right|^{\gamma_\ell}.
    \end{equation*}
    Since $\gamma_\ell \geq 2$ by Lemma~\ref{lem:G-cc-size-2}, it follows that 
    \begin{equation} \label{eq:cc-bound}
    \left|\E \Bigg[\prod_{r=1}^p \left(\vzeta_{\vjj{r}} - \E \vzeta_{\vjj{r}}\right)\Bigg]\right|
    \le 2^p \prod_{\ell=1}^L \sigma^2 B^{\gamma_\ell - 2}.
    \end{equation}
    Using the fact that $\sum_{\ell=1}^L \gamma_\ell = pk$, we conclude that
    \begin{equation*}
    \left|\E \Bigg[\prod_{r=1}^p \left(\vzeta_{\vjj{r}} - \E \vzeta_{\vjj{r}}\right)\Bigg]\right|
    \le 2^p \sigma^{2L} B^{pk - 2L},
    \end{equation*}
    completing the proof.
\end{proof}

\begin{lemma}\label{lem:G-cc-count}
    The number of different graphs constructed from $pk$ vertices with $L$ connected components where each connected component has an even size  is bounded by
    \begin{equation*}
        2^{p k} L^{p k-L}. 
    \end{equation*}
\end{lemma}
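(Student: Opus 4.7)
The plan is to observe that Rule~\ref{rule:(i)} forces the graph $G$ to be a disjoint union of cliques, so the combinatorial problem collapses to counting set partitions. Indeed, the relation ``$(r_1,l_1) \sim (r_2,l_2)$ iff $(j_{l_1}^{(r_1)}, j_{l_1+1}^{(r_1)}) = (j_{l_2}^{(r_2)}, j_{l_2+1}^{(r_2)})$'' is plainly an equivalence relation on $\calI$, and its equivalence classes are exactly the connected components of $G$ (each of which is a clique on its vertex set). Hence the number of distinct graphs with vertex set $\calI$, $|\calI| = pk$, having $L$ connected components equals the number of set partitions of $\calI$ into $L$ nonempty blocks. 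The ``even size'' hypothesis can be discarded for the purposes of an upper bound, since it only shrinks the class of admissible partitions.

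Next I would bound the number of set partitions of an $N$-element set into $L$ nonempty blocks by a direct encoding. Fix any total order on $\calI$. Given any partition into $L$ blocks, select from each block its $\le$-minimal element; this produces an unordered choice of $L$ distinct representatives among the $pk$ vertices, contributing a factor of at most $\binom{pk}{L}$. Each of the remaining $pk - L$ non-representative vertices is then assigned to the unique block containing its representative, and encoding this assignment costs at most $L^{pk - L}$ possibilities (each non-representative chooses one of $L$ labels). The map from partitions to such (representatives, assignment) pairs is injective, so the number of partitions into exactly $L$ blocks is at most $\binom{pk}{L} L^{pk - L}$.

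Combining these estimates and using the crude bound $\binom{pk}{L} \le 2^{pk}$, the total count of admissible graphs is at most $2^{pk} L^{pk - L}$, as claimed. There is no substantive technical obstacle here; the only point requiring care is the initial observation that Rule~\ref{rule:(i)} yields a clique-union structure, which is what permits reducing the problem to a standard Stirling-type count rather than a count of arbitrary labeled graphs on $pk$ vertices.
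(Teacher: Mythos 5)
Your proof is correct and takes essentially the same approach as the paper's (which is lifted from Lemma 7 of Chen, Cheng, and Fan, 2021): choose a representative per component in $\binom{pk}{L}$ ways, assign the remaining $pk-L$ vertices in at most $L^{pk-L}$ ways, and bound $\binom{pk}{L}\le 2^{pk}$. Your explicit observation that Rule~(i) forces each connected component to be a clique, so that the count reduces to counting set partitions, is a helpful clarification that the paper leaves implicit, but the combinatorial core is identical.
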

\begin{proof}
    This is Lemma 7 in \cite{chen2021asymmetry}. 
    For completeness, we include the proof here. 
    We first pick $L$ vertices and assign them to $L$ different components, which can be achieved in $\binom{pk}{L}$ ways. 
    We still have $pk-L$ vertices left, and the number of ways to assign them to $L$ components is upper bounded by $L^{pk-L}$. 
    Thus, the number of different graphs is bounded by
    \begin{equation*}
        \binom{pk}{L} L^{pk-L} \leq 2^{pk} L^{pk-L},
    \end{equation*}
    as we set out to show.
\end{proof}

\subsection{Bounding the cardinality of $\psi^{-1}(G)$} \label{sec:bound}

In view of Equation~\eqref{eq:grand-decomposition}, it remains to bound $|\psi^{-1}(G)|$ to complete the upper bound of Equation~\eqref{eq:moment-i}.
The difficulty arises from the fact that we have three different types of indexes, $j_{1}^{(r)}$, $j_{k+1}^{(r)}$ and $j_{l}^{(r)}$ for $2\leq l \leq k$. 
While $j_{l}^{(r)}$ for $2\leq l \leq k$ can generally take any value from $[n]$, the values of $j_1^{(r)}$ and $j_{k+1}^{(r)}$ are restricted by the support of $\vx$ and $\vy$, respectively. 
The prior literature does not take into account that $N_x := \|\vx\|_0$ and $N_y := \|\vy\|_0$ may be much smaller than $n$ and we will provide a finer counting argument considering $N_x$ and $N_y$. 
In Section~\ref{sec:new-graph}, we set the stage and obtain preliminary results. 
The results controlling $|\psi^{-1}(G)|$ are then given in Section~\ref{sec:results}.

\subsubsection{Creating a new graph from $G$}\label{sec:new-graph}

To start with, we note that there is a critical constraint of $\mJ$ not captured by $G$. 
Recall the notation $\phi(G_\ell)$ introduced in Equation~\eqref{eq:phi:define}.
If a connected component $G_{\ell_1}$ contains $(r, l) \in \calI$ and another connected component $G_{\ell_2}$ contains $(r, l+1) \in \calI$, then we must have that the second value of $\phi(G_{\ell_1})$ and the first value of $\phi(G_{\ell_2})$ are the same. 
To enforce this constraint, we construct a new {\em directed} graph, $\Gnew$, from the graph $G$. 
Letting $G = \cup_{\ell \in [L]} G_\ell$ be the decomposition of $G$ into its connected components, we identify the vertices of $\Gnew$ with these $L$ connected components.
Since we identify $G_1,G_2,\dots,G_L$ with the vertices of $\Gnew$, we abuse notation slightly and simply use $G_{\ell}$ to denote the $\ell$-th node of $\Gnew$.
Whether we intend to refer to a node of $\Gnew$ or a connected component of $G$ will be clear from the context.
By construction, $\Gnew$ has $L$ vertices.
We connect pairs of vertices in $\Gnew$ according to Rule~\ref{rule:(ii)}:
\begin{enumerate}[label=(\roman*)]\addtocounter{enumi}{1}
    \item \label{rule:(ii)} Form a {\em directed} edge from node $G_{\ell_1}$ to $G_{\ell_2}$ in $\Gnew$ if there exists some $(r,l) \in \calI$ such that $(r,l)$ is in $G_{\ell_1}$ and $(r,l+1) \in \calI$ is in $G_{\ell_2}$. 
\end{enumerate} 
Figure~\ref{fig:example-new} depicts an example graph constructed from the graph in Figure~\ref{fig:example} according to Rule~\ref{rule:(ii)}.

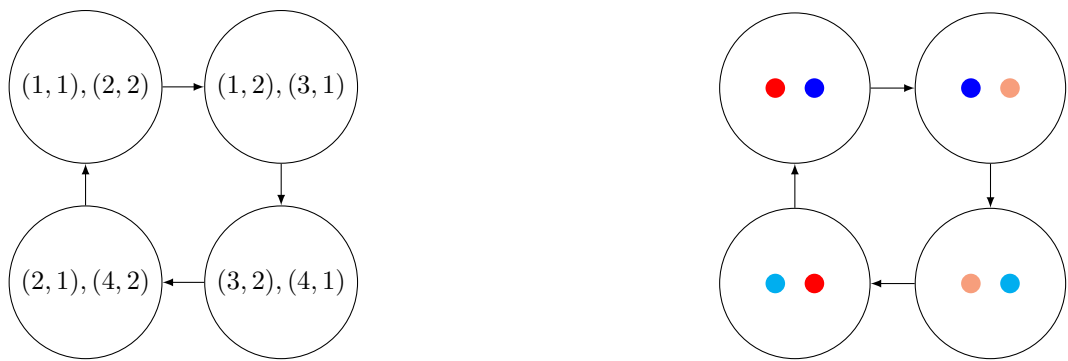
\begin{figure}[ht]
    \centering
    \begin{subfigure}{0.45\textwidth}
    \centering
    \begin{tikzpicture}[scale=1.3, every node/.style={draw, circle, minimum size=0.7cm}]
        \node (v1122) at (0, 0) {$(1,1), (2,2)$};
        \node (v1231) at (2, 0) {$(1,2), (3,1)$};
        \node (v2142) at (0, -2) {$(2,1), (4,2)$};
        \node (v3241) at (2, -2) {$(3,2), (4,1)$};
        
        \draw[-latex] (v1122) -- (v1231);
        \draw[-latex] (v1231) -- (v3241);
        \draw[-latex] (v2142) -- (v1122);
        \draw[-latex] (v3241) -- (v2142);
    \end{tikzpicture}
    \end{subfigure}
    \hfill
    \begin{subfigure}{0.45\textwidth}
    \centering
    \begin{tikzpicture}[scale=1.3, every node/.style={draw, circle, minimum size=2cm}]
        \node (v1122) at (0, 0) {};
        \fill[red] (-0.2,0) circle (0.1cm);
        \fill[blue] (0.2,0) circle (0.1cm);
        \node (v1231) at (2, 0) {};
        \fill[blue] (2-0.2,0) circle (0.1cm);
        \fill[Melon] (2+0.2,0) circle (0.1cm);
        \node (v2142) at (0, -2) {};
        \fill[cyan] (0-0.2,-2) circle (0.1cm);
        \fill[red] (0+0.2,-2) circle (0.1cm);
        \node (v3241) at (2, -2) {};
        \fill[Melon] (2-0.2,-2) circle (0.1cm);
        \fill[cyan] (2+0.2,-2) circle (0.1cm);
        
        \draw[-latex] (v1122) -- (v1231);
        \draw[-latex] (v1231) -- (v3241);
        \draw[-latex] (v2142) -- (v1122);
        \draw[-latex] (v3241) -- (v2142);
    \end{tikzpicture}
    \end{subfigure}
    \caption{A graph $\Gnew$ created following Rule~\ref{rule:(ii)} when $p = 2$ from the graph $G$ in Figure~\ref{fig:example}. 
    The left plot shows the labels included in nodes of $\Gnew$ (which corresponds to connected components of $G$, see Figure~\ref{fig:example}). The right plot displays a possible coloring of $\Gnew$ (see Section~\ref{sec:coloring-scheme} below).}\label{fig:example-new}
\end{figure} 

\subsubsection{Coloring scheme} \label{sec:coloring-scheme}

To more explicitly encode the constraints that certain indices have to take on the same value, it is helpful to introduce a coloring scheme.
\begin{definition}[Coloring scheme] \label{def:coloring}
    For $\mJ \in [n]^{p(k+1)}$, if two indices $j^{(r_1)}_{\ell_1}$ and $j^{(r_2)}_{\ell_2}$ are assigned the same value from $[n]$, we associate these two indexes with the same color.
    Otherwise, they are associated with different colors. 
\end{definition}

This way, any connected component $G_{\ell}$ of $G$ and any node in $\Gnew$ receives a tuple of two colors.
For example, if $(r,l) \in V(G_{\ell})$, then the first color of $G_{\ell}$ is the color associated with $j_{l}^{(r)}$, and the second color is associated with $j_{l+1}^{(r)}$.
An example of a colored graph is shown in the right subplot of Figure~\ref{fig:example-new}. 

Following the coloring scheme, different connected components have different pairs of colors (the order of the pair of colors also matters) since they must correspond to different entries of $\mH$. 
In addition, we have the following observation:
\begin{observation} \label{obs:color}
    For any two nodes $G_1, G_2 \in V(\Gnew)$, if there is an edge from $G_1$ to $G_2$, then the first color of $G_2$ must match the second color of $G_1$. 
\end{observation}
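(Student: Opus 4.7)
The plan is to prove Observation~\ref{obs:color} by a direct unpacking of the coloring scheme together with the two edge-formation rules. First, I would pin down what the ``first color'' and ``second color'' of a node $G_\ell \in V(\Gnew)$ mean operationally. Each vertex $(r,l) \in \calI$ of the original graph $G$ corresponds to the noise entry $H_{j^{(r)}_l j^{(r)}_{l+1}}$, so under Definition~\ref{def:coloring} its first color is the color of the index $j^{(r)}_l$ and its second color is the color of $j^{(r)}_{l+1}$. By Rule~\ref{rule:(i)}, every vertex in the connected component $G_\ell$ corresponds to the same entry of $\mH$, which forces all ``first coordinates'' to take one common value and all ``second coordinates'' to take another. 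Hence the ordered pair of colors attached to the node $G_\ell \in V(\Gnew)$ is well defined and independent of the representative vertex used to read it off.

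With that in hand, the observation is a one-line consequence of Rule~\ref{rule:(ii)}. Suppose there is a directed edge from $G_{\ell_1}$ to $G_{\ell_2}$ in $\Gnew$. By the rule, there exists some $(r,l) \in \calI$ with $(r,l) \in V(G_{\ell_1})$ and $(r,l+1) \in V(G_{\ell_2})$. Reading colors from the representative $(r,l)$ in $G_{\ell_1}$, the second color of $G_{\ell_1}$ is the color of $j^{(r)}_{l+1}$. Reading colors from the representative $(r,l+1)$ in $G_{\ell_2}$, the first color of $G_{\ell_2}$ is the color of $j^{(r)}_{(l+1)}$, i.e., again the color of $j^{(r)}_{l+1}$. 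The two colors coincide, which is exactly the claim.

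The only subtlety worth highlighting is the well-definedness step in the first paragraph: different representative vertices in $V(G_\ell)$ must yield the same ordered pair of colors for $G_\ell$. This is precisely what Rule~\ref{rule:(i)} guarantees, since edges in $G$ are formed only between indices whose associated $(j^{(r)}_l, j^{(r)}_{l+1})$ pairs coincide. Beyond this bookkeeping, the argument is a direct chase through definitions, so I do not anticipate any genuine obstacle.
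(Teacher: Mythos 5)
Your proof is correct and takes the same route the paper implicitly intends: Observation~\ref{obs:color} is stated without a separate argument because it is a direct consequence of Definition~\ref{def:coloring} and Rules~\ref{rule:(i)} and~\ref{rule:(ii)}, and your write-up is a careful unpacking of exactly those definitions. The one genuinely load-bearing point — that the ordered color pair of a node of $\Gnew$ is well defined because Rule~\ref{rule:(i)} forces every vertex in a connected component of $G$ to carry the same ordered index pair, hence the same ordered color pair — you identify and justify correctly, so there is nothing missing.
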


We introduce a few definitions that will be used repeatedly throughout the proof to facilitate the discussion. 
\begin{definition}\label{def:start-end} 
    A node $v \in V(\Gnew)$ is called a \textbf{start node} if, for some $r \in [p]$, $(r, 1) \in v$, and an \textbf{end node} if $(r, k) \in v$, for some $r \in [p]$. 
\end{definition}

To distinguish the contribution of the three different types of indexes $j_{1}^{(r)}$, $j_{k+1}^{(r)}$ and $j_{l}^{(r)}$ for $2\leq l \leq k$ mentioned at the start of Section~\ref{sec:bound}, we introduce the following definitions: 
\begin{definition}\label{def:colors}
    We refer to the first color of a start node as a \textbf{start color}, the second color of an end node as an \textbf{end color}. 
If a color is neither a start nor end color, we call it an \textbf{internal color}.
If a color is either a start or end color, we call it a \textbf{terminal color}. 
\end{definition}
Definition~\ref{def:colors} helps us to count $\psi^{-1}(G)$.
Recall from our coloring scheme in Definition~\ref{def:coloring} that corresponding indices must take the same value from $[n]$ if their colors are the same. 
More precisely, we have the following observation:
\begin{observation} \label{obs:color-value}
    If an index $j_l^{(r)}$ for some $(r,l) \in \calI$ receives a start color, then for some $s \in [p]$, $j_l^{(r)} = j_1^{(s)}$, and thus, its value has $N_x$ possible choices; if $j_l^{(r)}$ receives an end color, then $j_l^{(r)} = j_{k+1}^{(s)}$ for some $s \in [p]$, and its value has $N_y$ choices; if $j_l^{(r)}$ receives an internal color, it has $n$ choices of values. 
\end{observation}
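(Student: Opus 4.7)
The plan is to verify the observation directly from the definitions of the coloring scheme and of start/end/internal colors. The statement is essentially bookkeeping, so I would proceed by unpacking each piece of the construction and checking the three cases in turn, paying particular attention to how the color of a node in $\Gnew$ is inherited from the indices in $\mJ$ that sit in the corresponding connected component of $G$.

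First, I would recall that by Definition~\ref{def:coloring} two indices of $\mJ$ receive the same color exactly when they take the same value in $[n]$. By the construction of $\Gnew$ from $G$ via Rule~\ref{rule:(ii)} together with Definitions~\ref{def:start-end} and~\ref{def:colors}, a start color is the first color of a start node, which by the coloring scheme is precisely the color of an index of the form $j_1^{(s)}$ for some $s \in [p]$; analogously, an end color is the color of some $j_{k+1}^{(s)}$, and an internal color is a color that appears at no such terminal index. Making this correspondence explicit is the only conceptual content of the argument.

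Given this correspondence, the three cases are immediate. If $j_l^{(r)}$ has a start color, then it shares its color with some $j_1^{(s)}$, so by Definition~\ref{def:coloring} we have $j_l^{(r)} = j_1^{(s)}$; the factor $x_{j_1^{(s)}}$ that appears in Equation~\eqref{eq:moment-i} vanishes outside $\supp(\vx)$, so only the $N_x = \|\vx\|_0$ values in the support contribute to any non-vanishing term. The end-color case is symmetric via the factor $y_{j_{k+1}^{(s)}}$ and gives $N_y = \|\vy\|_0$ choices. If the color is internal, then $j_l^{(r)}$ is not tied to any $j_1^{(s)}$ or $j_{k+1}^{(s)}$, so no factor of $\vx$ or $\vy$ constrains it and all $n$ elements of $[n]$ remain admissible.

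The main obstacle is notational rather than mathematical: one must verify that the two-tuple of colors attached to a node $G_\ell \in V(\Gnew)$ via Rule~\ref{rule:(ii)} is indeed consistent with the colors of the individual indices $j_l^{(r)}$ occurring in $G_\ell$, that is, that the first color of the node containing $(r,l)$ equals the color of $j_l^{(r)}$ and its second color equals that of $j_{l+1}^{(r)}$. Once this coherence between the two coloring conventions is checked, each case reduces to counting admissible values for a single color class, and the observation follows.
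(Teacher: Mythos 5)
Your proposal is correct and matches what the paper intends: the observation is stated without proof because it is a direct unpacking of Definition~\ref{def:coloring} (same color iff same value), Definitions~\ref{def:start-end} and~\ref{def:colors} (start/end/internal colors), and the rule that the first and second colors of the node containing $(r,l)$ are the colors of $j_l^{(r)}$ and $j_{l+1}^{(r)}$ respectively — together with the fact that the factors $x_{j_1^{(s)}}$ and $y_{j_{k+1}^{(s)}}$ in Equation~\eqref{eq:moment-i} restrict those indices to $\supp(\vx)$ and $\supp(\vy)$ for nonzero terms. Your "coherence check" between the node-level two-tuple of colors and the index-level coloring is exactly the right thing to verify, and the three cases follow as you describe.
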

By counting the number of distinct colors of each type that can be assigned to $\Gnew$, we can derive a bound for $|\psi^{-1}(G)|$.
We first derive a result that controls the total number of distinct colors that can be assigned to a weakly connected component of $\Gnew$ in Lemma~\ref{lem:color-bound}, disregarding the types of colors. 

\begin{lemma} \label{lem:color-bound}
    For a weakly connected component $\Gnew_l$ in $\Gnew$ having $\beta_l$ nodes, we can assign at most $\beta_l+1$ distinct colors to $\Gnew_l$. 
\end{lemma}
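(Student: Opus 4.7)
The plan is to exploit a simple counting argument based on the two-colors-per-node structure combined with the constraint imposed by edges via Observation~\ref{obs:color}. The intuition is that each node contributes at most two colors (its first and its second), so na\"ively $\beta_l$ nodes give $2\beta_l$ color slots; every edge forces two of those slots to coincide, which cuts down the number of distinct colors. Since a weakly connected graph on $\beta_l$ nodes has at least $\beta_l - 1$ edges, we can kill at least $\beta_l - 1$ color slots, leaving at most $2\beta_l - (\beta_l - 1) = \beta_l + 1$ distinct colors.

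To make this rigorous, I would first recall from Rule~\ref{rule:(ii)} and Observation~\ref{obs:color} that a directed edge from $G_{\ell_1}$ to $G_{\ell_2}$ in $\Gnew$ forces the first color of $G_{\ell_2}$ to equal the second color of $G_{\ell_1}$. Next, I would pass to the underlying undirected graph of $\Gnew_l$ and choose a spanning tree $T$ of $\Gnew_l$, which has exactly $\beta_l - 1$ edges. I would pick an arbitrary root of $T$ and enumerate the remaining $\beta_l - 1$ nodes in a BFS/DFS order, so that each non-root node $v$ is reached through a unique tree-edge connecting it to a previously visited node.

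At initialization, the root contributes at most two distinct colors. When processing a new node $v$ through its parent edge, the identification between the second color of one endpoint and the first color of the other (as guaranteed by Observation~\ref{obs:color}) ensures that one of the two color slots of $v$ coincides with a color already counted among previously visited nodes; hence $v$ contributes at most one new color. Summing, the total number of distinct colors appearing in $\Gnew_l$ is bounded by $2 + (\beta_l - 1) \cdot 1 = \beta_l + 1$, and any additional non-tree edges of $\Gnew_l$ only enforce further identifications and cannot increase this count.

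The step that deserves a little care is the BFS/DFS accounting: one must verify that regardless of the orientation of the tree edge connecting $v$ to its parent, exactly one of the two color slots of $v$ (either its first color or its second color, depending on the edge direction) is forced by Observation~\ref{obs:color} to equal a color already introduced. This is the only nontrivial part of the argument, and it follows directly from the fact that the identification in Observation~\ref{obs:color} always equates one node's first color with the other node's second color, irrespective of which endpoint is newly visited. Everything else reduces to the standard fact that a weakly connected graph on $\beta_l$ nodes has at least $\beta_l - 1$ edges.
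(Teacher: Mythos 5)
Your proposal is correct and takes essentially the same approach as the paper: both arguments pass to a spanning tree of the underlying undirected graph, root it, and observe that each non-root node contributes at most one new color because one of its two color slots is identified (via Observation~\ref{obs:color}) with a color already introduced at its parent, yielding the bound $2 + (\beta_l - 1) = \beta_l + 1$.
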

\begin{proof}
    Recall that a weakly connected component is a connected component when we view the directed graph as an undirected graph. 
    To prove Lemma~\ref{lem:color-bound}, we take a spanning tree of $\Gnew_l$ and pick a root node arbitrarily. 
    We can consider the spanning tree as an undirected graph.
    The only thing that matters to us is that if there is an edge between two nodes, these two nodes must share one color following Observation~\ref{obs:color}.
    An example spanning tree is shown in Figure~\ref{fig:example-spt}. 
    We start at the root node, which can be assigned a pair of distinct colors. 
    Suppose the number of child nodes at depth $h$ is $\beta_{l,h}$, and the depth of the spanning tree is $h_0$. 
    By Observation~\ref{obs:color}, it is straightforward to see that after fixing the colors of the root node, the number of distinct colors at depth $1$ is $\beta_{l,1}$.
    Similarly, fixing the child nodes at depth $h$ and moving to depth $h+1$, we see that the number of distinct colors that can be assigned at depth $h+1$ is $\beta_{l,h+1}$. 
    Since the total number of nodes in the spanning tree of $\Gnew_l$ is given by
    \begin{equation*}
       1 + \sum_{h=1}^{h_0} \beta_{l, h} = \beta_l,
    \end{equation*}
    the maximum number of distinct colors that can be assigned to the spanning tree of $\Gnew_l$ is given by
    \begin{equation*}
        2 + \sum_{h=1}^{h_0} \beta_{l, h} = \beta_l+1,
    \end{equation*}
    as we set out to show.
    \end{proof}
    
    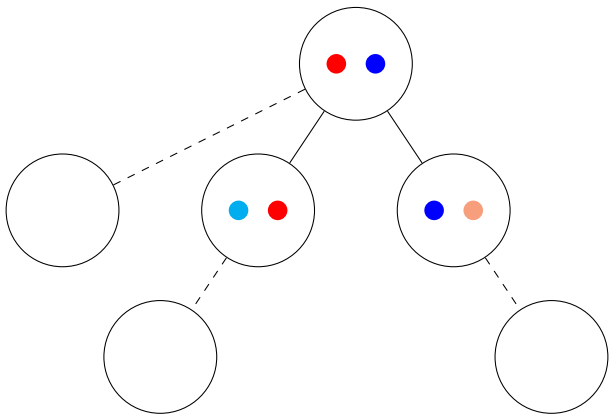
\begin{figure}[ht]
    \centering
    \begin{tikzpicture}[scale=1.3, every node/.style={draw, circle, minimum size=1.5cm}]
        \node (r) at (0, 1.5) {};
        \fill[red] (-0.2,1.5) circle (0.1cm);
        \fill[blue] (0.2,1.5) circle (0.1cm);
        \node (v1) at (1, 0) {};
        \fill[blue] (1-0.2,0) circle (0.1cm);
        \fill[Melon] (1+0.2,0) circle (0.1cm);
        \node (v2) at (-1, 0) {};
        \fill[cyan] (-1-0.2,0) circle (0.1cm);
        \fill[red] (-1+0.2,0) circle (0.1cm);
        \node (v3) at (2, -1.5) {};
        \node (v4) at (-2, -1.5) {};
        \node (v5) at (-3, 0) {};
        \draw (r) -- (v1);
        \draw (r) -- (v2);
        \draw[dashed] (v1) -- (v3);
        \draw[dashed] (v2) -- (v4);
        \draw[dashed] (r) -- (v5);
    \end{tikzpicture}
    \caption{An example spanning tree. To color it, we start from the root node and then assign colors level by level. Dashed edges and empty nodes represent the remaining subtrees of the spanning tree. }\label{fig:example-spt}
\end{figure}

\subsubsection{Details for graph-counting lemmas: Lemmas~\ref{lem:tree-terminals},~\ref{lem:forest-color} and~\ref{lem:loops}} \label{sec:preliminary}

Our proof relies on graphical tools, and we pause to review some general graph terminologies that will be used frequently throughout the rest of the proof.

In a directed graph, a \textbf{source} is a node with no incoming edge, a \textbf{sink} is a node with no outgoing edge, and a \textbf{terminal} is either a source or a sink. 
A \textbf{branch node} has at least two outgoing edges or two incoming edges.
A \textbf{leaf} is a node with degree one (either in-degree or out-degree).
For any two nodes $u$ and $v$ in a directed graph $\Gamma$, we denote the \emph{shortest path} from $u$ to $v$ in a directed graph as $P(u, v; \Gamma)$.
When there are multiple shortest paths, we pick one path arbitrarily.
The \textbf{directed distance} $d(u,v; \Gamma)$ is the number of edges in the shortest path from $u$ to $v$. 
The \textbf{undirected distance} $d_0(u,v; \Gamma)$ is the number of edges in the shortest path from $u$ to $v$ when we view the directed graph as an undirected graph. 
When it is clear from context, we suppress the dependence on $\Gamma$ and write $d(u,v; \Gamma)$ and $d_0(u,v; \Gamma)$ as $d(u,v)$ and $d_0(u,v)$, respectively.
Denote the closest source node to $u$ in a graph $\Gamma$ as 
\begin{equation} \label{eq:source}
    s_{\Gamma}(u)
\end{equation}
and the closest sink node to $u$ as 
\begin{equation} \label{eq:sink}
    e_{\Gamma}(u). 
\end{equation}
If there are multiple closest nodes, we choose one arbitrarily.

Before proceeding to the details of Lemmas~\ref{lem:tree-terminals}-\ref{lem:loops}, we provide a roadmap.
Lemma~\ref{lem:color-bound} establishes a general bound on the total number of distinct colors, without considering the specific color types defined in Definition~\ref{def:colors}. 
The goal of Lemmas~\ref{lem:tree-terminals} through \ref{lem:loops} is to further refine this bound by determining the number of distinct colors of each type that can be assigned.
In view of the proof of Lemma~\ref{lem:color-bound}, we see that trees are easier to handle, as they have a well-defined hierarchical structure that allows for assigning colors level-by-level. 
This structure also makes it relatively straightforward to distinguish and separate each node type defined in Definition~\ref{def:start-end} within the tree.
Therefore, we first prove results for trees in Lemma~\ref{lem:tree-terminals}. 
The next closest graphical object to a tree is a \emph{forest}, and Lemma~\ref{lem:forest-color} extends the results of Lemma~\ref{lem:tree-terminals} from trees to forest. 
Finally, in Lemma~\ref{lem:loops}, we prove results for arbitrary graphs by reducing them to forests. 

In Lemmas~\ref{lem:tree-terminals} through \ref{lem:loops}, we will often refer to graphs that satisfy the following conditions:
\begin{condition}\label{cond:start-end}
    Every source is a start node, and every sink is an end node. 
    The reverse also holds: every start node is a source, and every end node is a sink.
\end{condition}
\begin{condition}\label{cond:k-forest}
    For every node $u$ in the graph, there exists a source node $u_s$ and a sink node $u_e$ in the graph such that $d(u_s, u) + d(u, u_e) \leq k-1$. 
\end{condition}
As a remark, the integer $k$ in Condition~\ref{cond:k-forest} is the same $k$ in the term $\vx^\top \mH^k \vy$. 
Condition~\ref{cond:k-forest} captures the combinatorial structure that every index $(j^{(r)}_l, j^{(r)}_{l+1})$ for $(r, l) \in \calI$ is in a index sequence
\begin{equation*}
    (j_1^{(r)}, j_2^{(r)}), (j_2^{(r)}, j_3^{(r)}), \cdots, (j_k^{(r)}, j_{k+1}^{(r)})
\end{equation*}
of length $k$, so for every node $u \in V(\Gnew)$, there will be a path with at most $k-1$ edges, which contains a start node, an end node, and the node $u$.
If $\Gnew$ satisfies Condition~\ref{cond:start-end}, then the start node is a source, and the end node is a sink, so $\Gnew$ satisfies Condition~\ref{cond:k-forest}. 
Although $\Gnew$ does not always satisfy Condition~\ref{cond:start-end}, it is possible to find a graph that satisfies both Conditions~\ref{cond:start-end} and~Condition~\ref{cond:k-forest}. 
The distinct number of colors of each type in this graph provides an upper bound for the corresponding number of colors in $\Gnew$.
This forms the main idea behind Lemmas~\ref{lem:tree-terminals} through \ref{lem:loops}.



\begin{lemma}\label{lem:tree-terminals}
For integers $k \ge 2$ and $t \ge 1$, consider a directed tree $T$ with $\beta$ nodes where $\beta \geq \max\left\{(t-1)k+1, 2\right\}$ such that Condition~\ref{cond:k-forest} holds with the integer $k$.
$T$ must have at least $t+1$ terminal nodes. 
\end{lemma}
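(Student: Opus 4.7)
I will prove the contrapositive by induction on $t$: a directed tree $T$ with at most $t$ terminals (sources plus sinks) that satisfies Condition~\ref{cond:k-forest} has $\beta \le (t-1)k$ nodes.

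\emph{Base case $t = 1$.}  Every tree on $\beta \ge 2$ nodes has at least two undirected leaves, and any leaf in a directed tree has total degree $1$ and is therefore either a source or a sink.  Hence such a tree has at least two terminals, so the contrapositive forces $\beta \le 1$, which vacuously matches $(t-1)k = 0$ in the $\beta \ge 2$ regime the lemma cares about.

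\emph{Inductive step $t \ge 2$.}  Assume the result for $t - 1$; suppose $T$ has $s + e \le t$ terminals.  Apply Condition~\ref{cond:k-forest} at any source $u_s$ of $T$: this yields a sink $u_e$ with $d_T(u_s, u_e) \le k-1$, and the unique directed path $P_0 = u_s \to \cdots \to u_e$ in $T$ has $|V(P_0)| \le k$.  Remove $P_0$ to obtain a forest $F := T \setminus V(P_0)$ with, say, $c$ connected components $F_1, \dots, F_c$; by the tree structure, $c$ equals the number of ``external'' tree edges between $V(P_0)$ and $V(T) \setminus V(P_0)$.

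\emph{Terminal bookkeeping.}  Removing $u_s$ and $u_e$ subtracts $2$ from the total terminal count, while each external edge can produce at most one new terminal in $F$: a removed external out-edge $v_m \to r$ may promote $r$ to a source of its component, and a removed external in-edge $r \to v_m$ may promote $r$ to a sink.  Thus $|S(F)| + |E(F)| \le (s+e) - 2 + c$.  Each $F_i$ is a non-empty directed tree and so contains at least one source and one sink, and redistribution then yields $|S(F_i)| + |E(F_i)| \le (s+e) - c \le t-1 < t$ for each $i$.  Granting that each $F_i$ inherits Condition~\ref{cond:k-forest}, the inductive hypothesis gives $|V(F_i)| \le (|S(F_i)| + |E(F_i)| - 1) k$, and summing gives
\begin{equation*}
    \beta \le |V(P_0)| + \sum_{i=1}^{c} |V(F_i)| \le k + \bigl( |S(F)| + |E(F)| - c \bigr) k \le k + (s+e-2) k = (s+e-1) k \le (t-1) k,
\end{equation*}
closing the induction.

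\emph{Main obstacle.}  The delicate step is showing that Condition~\ref{cond:k-forest} is preserved by the reduction to each component $F_i$.  For a node $u \in F_i$ whose serving pair $(u_s(u), u_e(u))$ in $T$ lies entirely outside $P_0$, the same pair works in $F_i$: the $T$-path stays inside $F_i$ by the tree property, and its endpoints remain terminals of $F_i$ since removing $P_0$ can only preserve or strengthen ``terminal'' status.  The problematic case is when $u_s(u)$ coincides with the source of $P_0$.  Then the $T$-path runs along $P_0$ up to some $v_m$, exits to $F_i$ via an edge $v_m \to r$, and continues to $u$.  If $r$ had $T$-in-degree $1$, then $r$ becomes a source of $F_i$ and the bound $d_{F_i}(r, u) + d_{F_i}(u, u_e(u)) \le k - 2 - m \le k - 1$ is immediate.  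When $r$ had in-degree $\ge 2$ in $T$, $r$ is not a source of $F_i$, and one must produce an alternative source in $F_i$ close to $u$.  Candidate strategies include choosing $P_0$ to start at a source that is a leaf of the underlying undirected tree, iteratively invoking Condition~\ref{cond:k-forest} on the in-neighbors of $r$ to propagate backward, or strengthening the inductive hypothesis with a suitably relaxed condition on the components.  This structural verification is where the principal combinatorial work of the proof concentrates.
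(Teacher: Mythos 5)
Your proposal takes a genuinely different decomposition from the paper, but it is incomplete: the step you yourself identify as ``the main obstacle'' --- verifying that Condition~\ref{cond:k-forest} is inherited by each component $F_i$ after deleting the source-to-sink path $P_0$ --- is exactly the hard part, and you do not close it. You list candidate strategies (choose $P_0$ from a leaf source, propagate backward from in-neighbors of $r$, strengthen the hypothesis) without committing to or completing any of them. As written, the induction does not go through; the bookkeeping inequality is only valid conditional on ``Granting that each $F_i$ inherits Condition~\ref{cond:k-forest},'' which is an unproven assertion and is in fact false in general when a whole path is excised. A node $u$ in $F_i$ whose nearest source and sink in $T$ both lay on or behind $P_0$ may have no nearby replacement terminals after $P_0$ is removed, precisely the case you flag when $r$ has in-degree $\ge 2$.

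The paper avoids this difficulty with a more surgical split: it finds a branch node $u_1$ with two incoming edges, removes the single edge $u_0 \to u_1$ (so $T$ cleaves into exactly two trees $T_1$, $T_2$), and then adds a fresh dummy sink $u_2$ with a new edge $u_0 \to u_2$ to $T_2$. Removing one edge keeps the pieces connected and makes the distance comparisons tractable, and the added dummy sink is what makes Condition~\ref{cond:k-forest} verifiably carry over to $T_2$ (the argument there is essentially $d(w', u_2; T_2) \le d(w', e_T(w'); T)$ when the old sink lay across the cut). The condition for $T_1$ follows from the careful choice of which incoming edge of $u_1$ to cut --- namely, one not on the shortest source path to $u_1$. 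Both of these devices are absent from your outline, and some analogue of them is needed; without a mechanism for repairing the condition on the pieces, the induction on $t$ cannot close. (Two minor points: you also want strong induction, since the $F_i$ can have any terminal count $\le t-1$, and singleton components need separate treatment since the ``$\ge 2$ leaves'' counting does not apply there.)
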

\begin{proof}
    We refer the reader to Section~\ref{sec:preliminary} for a summary of the definitions.
    To start, we observe that every leaf node in a tree $T$ is either a source or a sink, and thus, every leaf node is a terminal.
    For $t = 1$, the claim in Lemma~\ref{lem:tree-terminals} holds trivially since every tree with at least $2$ nodes has at least $2$ leaves, which are both terminals.
    We prove the claim for $t\geq 2$ by induction. 
    For the base case $t=2$, if $T$ only has $2$ terminals, it is a path that can not contain more than $k$ nodes by Condition~\ref{cond:k-forest}. 
    Therefore, our claim holds for $t=2$ by contradiction.
    
    Suppose the statement in Lemma~\ref{lem:tree-terminals} holds for all $t \in \{2,3,\dots, t'\}$. 
    We show that it also holds for $t = t'+1$.
    Any tree has at least one source node. 
    When there is only one source, from the argument above, the tree must have at least two sinks. 
    Reversing the direction of all the edges in the tree, we have at least two sources,
    so without loss of generality, we assume that there are at least two sources.
    
    We break the remaining proof into four steps. 
    In Step 1, we divide $T$ into two disjoint trees $T_1$ and $T_2$. 
    We verify in Step 2 that $T_1$ satisfies Condition~\ref{cond:k-forest}, and in Step 3 that $T_2$ also satisfies Condition~\ref{cond:k-forest}. 
    We then use the induction hypothesis on $T_1$ and $T_2$ in Step 4 to complete the proof.
    
    \paragraph{Step 1. Divide $T$ into $T_1$ and $T_2$.}
    Starting from any source node $v_1$, we follow one of its outgoing edges to reach another node $v'$, and then continue by following one of the outgoing edges of $v'$.
    Repeating this process, eventually, we reach a node $u_1$ that has at least two incoming edges. 
    We denote this directed path from $v_1$ to $u_1$ as $P_1$.
    Tracing back along one of the other incoming edges of $u_1$, we reach a node $u_0$.
    By tracing back along any of the incoming edges of $u_0$ and continuing this process, we eventually arrive at a node with no incoming edge, which is another source node, which we denote $v_2$.
    We denote this directed path from $v_2$ to $u_0$ as $P_2$.
    A graphical illustration is shown in Figure~\ref{fig:tree-illustrate}.

    \begin{figure}[ht]
        \centering
        \begin{subfigure}{0.45\textwidth}
        \begin{tikzpicture}[scale=1.5, roundnode/.style={draw, circle, minimum size=7mm},]
            \node[roundnode] (v1) at (-1, 1) {$v_1$};
            \node[roundnode] (v2) at (1, 1) {$v_2$};
            \node[roundnode] (v') at (-0.5, 0.5) {$v'$};
            \node[roundnode] (u1) at (0, -1) {$u_1$};
            \node[roundnode] (u0) at (1, -0.3) {$u_0$};

            \draw [dashed] plot [smooth cycle] %
            coordinates {(-1.3, 1.5) (-1.5, 0.8)  (-0.2, -1.3) (0.3, -1) (-0.3, 0.8)};
            \node at (-0.8, 0) {$T_1$};

            \draw [dashed] plot [smooth cycle] %
            coordinates {(1, 1.5) (0.6, 1.2)  (0.6, -0.5) (1.4, -0.5) (1.4, 1.2) };
            \node at (1.2, 0.3) {$T_2'$};
            
            \draw[-latex] (v1) -- (v');
            \draw[dashed, -latex] (v') -- (u1);
            \draw[-latex] (u0) -- (u1);
            \draw[dashed, -latex] (v2) -- (u0);
        \end{tikzpicture}
        \end{subfigure}
        \begin{subfigure}{0.45\textwidth}
        \begin{tikzpicture}[scale=1.5, roundnode/.style={draw, circle, minimum size=7mm},]
            \node[roundnode] (v1) at (-1, 1) {$v_1$};
            \node[roundnode] (v2) at (2, 1) {$v_2$};
            \node[roundnode] (v') at (-0.5, 0.5) {$v'$};
            \node[roundnode] (u1) at (0, -1) {$u_1$};
            \node[roundnode] (u0) at (1.5, 0) {$u_0$};
            \node[roundnode] (u2) at (1.5, -1) {$u_2$};

            \draw [dashed] plot [smooth cycle] %
            coordinates {(-1.3, 1.5) (-1.5, 0.8)  (-0.2, -1.3) (0.3, -1) (-0.3, 0.8)};
            \node at (-0.8, 0) {$T_1$};

            \draw [dashed] plot [smooth cycle] %
            coordinates {(1.2, 1.5) (1, -1.2) (2, -1) (2.4, 1.2) };
            \node at (1.4, 0.6) {$T_2$};
            
            \draw[-latex] (v1) -- (v');
            \draw[dashed, -latex] (v') -- (u1);
            \draw[-latex] (u0) -- (u2);
            \draw[dashed, -latex] (v2) -- (u0);
        \end{tikzpicture}
        \end{subfigure}
        \caption{Left: An graphical illustration of the relation between $v_1$, $v_2$, $u_0$ and $u_1$ in the tree $T$. Right: A graphical illustration of how we create the two trees $T_1$ and $T_2$ from $T$. We remind the reader that these two plots do not include all the nodes in $T$.}
        \label{fig:tree-illustrate}
    \end{figure}
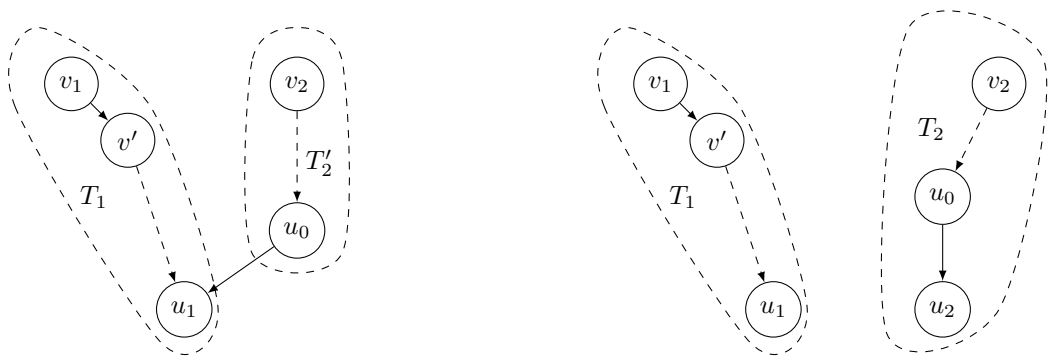

    Recall the definition of $s_\Gamma(\cdot)$ and $e_{\Gamma}(\cdot)$ for any graph $\Gamma$ from Equations~\eqref{eq:source} and~\eqref{eq:sink}.
    Since there are at least two incoming edges to $u_1$, at least one of the two edges is not in $P(s_T(u_1), u_1; T)$, the shortest path from $s_T(u_1)$ to $u_1$. 
    Without loss of generality, we assume that $P(s_T(u_1), u_1; T)$ does not contain the edge $u_0 \to u_1$. 
    Removing the edge $u_0 \to u_1$, the tree $T$ becomes two trees $T_1$ and $T_2'$, where $T_1$ contains $v_1$ and $u_1$, $T_2'$ contains $v_2$ and $u_0$.
    We add an extra node $u_2$ to $T_2'$ by assigning an outgoing edge from $u_0$ to $u_2$, obtaining another tree $T_2$. 
    Noting that $u_2$ is a sink in $T_2$.
    See a graphical illustration in the right plot of Figure~\ref{fig:tree-illustrate}.
    
    We claim that $T_1$ and $T_2$ satisfy Condition~\ref{cond:k-forest}. A proof follows.

    \paragraph{Step 2. $T_1$ satisfies Condition~\ref{cond:k-forest}.}
    Note that the edge $u_0 \to u_1$ is the only edge connecting $T_2$ to $T_1$ and there is no outgoing edge from $T_1$ to $T_2$.
    An immediate result is that for any node $w \in T_1$, the path $P(w, e_T(w); T)$ does not contain any node in $T_2$.
    It follows that
    \begin{equation} \label{eq:eT1}
        e_{T_1}(w) = e_T(w), \quad \text{and} \quad d(w, e_{T_1}(w); T_1) = d(w, e_T(w); T).
    \end{equation}
    Let $s_2$ be any source node in $T_2$. 
    Note that for every node $w$ in $T_1$, the path $P(s_2, w; T)$, if it exists, must go through the edge $u_0 \to u_1$.
    In particular, since $u_1 \in V(T_1)$, the path $P(s_2, u_1; T)$ must contain the edge $u_0 \to u_1$. 
    Therefore, for every node $w$ in $T_1$,
    \begin{equation*}
        d(s_T(u_1), w; T) \leq d(s_T(u_1), u_1; T) + d(u_1, w; T) \leq d(s_2, u_1; T) + d(u_1, w; T) = d(s_2, w; T),
    \end{equation*}
    where the first inequality holds from the definition of $d(\cdot, \cdot)$; the second inequality holds by the assumption that $P(s_T(u_1), u_1; T)$ does not contain the edge $u_0 \to u_1$, while $P(s_2, u_1; T)$, if exists, must contain $u_0 \to u_1$;
    the last equality holds since any shortest path from $s_2$ to $w$ must include the edge $u_0 \to u_1$, therefore can be decomposed into a path from $s_2$ to $u_1$, and a path from $u_1$ to $w$.  
    Furthermore, by definition of $s_T(w)$ in Equation~\eqref{eq:source},
    \begin{equation*}
        d(s_T(w), w; T) \leq d(s_T(u_1), w; T) \leq d(s_2, w; T).
    \end{equation*}
    Therefore, we conclude that $s_{T}(w)$ is in $T_1$, and we have 
    \begin{equation}\label{eq:sT1}
        d(s_{T_1}(w), w; T_1) = d(s_{T}(w), w; T).
    \end{equation}
    Combining Equations~\eqref{eq:eT1} and~\eqref{eq:sT1}, we see that removing the edge $u_0 \to u_1$ does not violate Condition~\ref{cond:k-forest} for $T_1$, since for every node $w \in T_1$,
    \begin{equation*}
        d(s_{T_1}(w), w; T_1) + d(w, e_{T_1}(w); T_1) = d(s_{T}(w), w; T) + d(e_{T}(w), w; T) \leq k-1. 
    \end{equation*}

    \paragraph{Step 3. $T_2$ satisfies Condition~\ref{cond:k-forest}.}
    For any node $w' \in T_2$, we must have $s_T(w') \in T_2$ since there is no outgoing edge from $T_1$ to $T_2$.
    Therefore, we have 
    \begin{equation}\label{eq:sT2}
        s_T(w') = s_{T_2}(w') \quad \text{and} \quad d\left(s_{T_2}(w'), w'; T_2\right) = d\left(s_{T}(w'), w'; T\right).
    \end{equation}
    If $P(w', e_T(w'); T)$, the shortest path from $w'$ to $e_T(w')$, does not contain the edge $u_0 \to u_1$, then we have $e_{T_2}(w') = e_T(w')$ and by Equation~\ref{eq:sT2}, Condition~\ref{cond:k-forest} holds for $w'$.
    Otherwise, if $P(w', e_T(w'); T)$ contains $u_0 \to u_1$, since $u_0$ has an outgoing edge $u_0 \to u_1$ in $T$, $u_0$ is not a sink, and we must have 
    \begin{equation}\label{eq:eTu0}
        e_T(w') = e_T(u_0) \quad \text{and} \quad d(u_0, e_T(u_0); T) \geq 1.
    \end{equation}
    It follows that
    \begin{equation*}
    \begin{aligned}
        d\left(w', u_2; T_2\right) &= d\left(w', u_0; T_2\right) + 1 \\
        &\leq d\left(w', u_0; T\right) + d\left(u_0, e_T(u_0); T\right) = d(w', e_{T}(w'); T),
    \end{aligned}
    \end{equation*}
    where the first equality follows from the fact that $P(w', u_2; T_2)$ can be decomposed into the shortest path from $w'$ to $u_0$ and the edge $u_0 \to u_2$; the first and last inequalities follow from Equation~\eqref{eq:eTu0}.
    Since $u_2$ is a sink in $T_2$, we conclude that 
    \begin{equation*}
        d(w', e_{T_2}(w'); T_2) \leq d\left(w', e_T(w'); T\right).
    \end{equation*}
    Thus, combining the above display with Equation~\eqref{eq:sT2}, we conclude that Condition~\ref{cond:k-forest} also holds for $T_2$.

    \paragraph{Step 4. Finishing the proof.}
    We have verified that both $T_1$ and $T_2$ satisfy Condition~\ref{cond:k-forest}, thus, we can make use of the induction hypothesis on both $T_1$ and $T_2$.
    We proceed by considering three different cases:
    \begin{itemize}
        \item $T_2$ contains less than $k+1$ nodes
        \item $T_1$ contains less than $k+1$ nodes
        \item Both $T_1$ and $T_2$ contain at least $k+1$ nodes
    \end{itemize}
    If $T_2$ contains less than $k+1$ nodes, since $u_2 \neq v_2$, it must be that $T_2$ has at least two leaves.
    Since 
    \begin{equation*}
        |T_1| = |T| + 1 - |T_2| = \beta+ 1 - |T_2| \geq t' k + 2 - k > (t'-1)k + 1,    
    \end{equation*}
    applying the induction hypothesis, it follows that there are $t'+1$ terminals in $T_1$. 
    Note that every terminal in $T_1$ is also a terminal in $T$,
    and every terminal in $T_2$ except $u_2$ is a terminal in $T$. 
    Combining $T_1$ and $T_2$ together, we see that there are at least $(t'+1) + 2 - 1 = t'+2$ terminals in $T$, establishing the claim for $t = t'+1$.

    On the other hand, if $T_1$ contains less than $k+1$ nodes, since $v_1, u_1 \in T_1$ and $u_1 \neq v_1$, $T_1$ also has at least two leaves. 
    Following the same argument as above, we conclude that there are $t'+2$ terminals in $T$.
    
    Finally, if both $T_1$ and $T_2$ contain at least $k+1$ nodes, suppose that $|T_1| = t_1 k + l_1$ for $1\leq l_1 < k$.
    We have 
    \begin{equation*}
        |T_2| = \beta+ 1 - |T_1| \geq (t'-t_1)k + 2 - l_1 > (t'-t_1-1)k + 2.
    \end{equation*}
    By the induction hypothesis, there are at least $t_1+2$ terminals in $T_1$ and at least $t'-t_1+1$ terminals in $T_2$.
    Thus, $T$ contains at least $(t_1+2)+(t'-t_1+1)-1 = t'+2$ terminals.
    Therefore, the claim also holds for $t = t'+1$, and the proof is complete.
\end{proof}


Lemma~\ref{lem:forest-color} extends Lemma~\ref{lem:tree-terminals} to forests. 

\begin{lemma}\label{lem:forest-color}
    Consider a forest $\Gamma$ with $\beta$ nodes such that Condition~\ref{cond:start-end} holds.
    If $\beta= 1$, we cannot assign any internal colors to this forest.
    If $\beta\geq \max\left\{(t-1)k+1, 2\right\}$ for $t \geq 1$ and $k \geq 2$, such that Condition~\ref{cond:k-forest} holds for $\Gamma$ with $k$, we can assign at most $\beta-t$ unique internal colors to this forest.
\end{lemma}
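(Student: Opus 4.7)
The plan is to reduce the global internal-color count to a sum of tree-local bounds, and then combine Lemma~\ref{lem:tree-terminals} with a short arithmetic argument. The case $\beta = 1$ is trivial: the lone node is both a source and sink under Condition~\ref{cond:start-end}, so its first color is a start color, its second is an end color, and no internal colors can be assigned.

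For $\beta \ge 2$, the first step is a local bound: for each tree component $T_l$ with $\beta_l \ge 2$, the number of colors appearing in $T_l$ that are not \emph{locally} terminal (i.e., neither the first color of any source of $T_l$ nor the second color of any sink of $T_l$) is at most $\beta_l + 1 - q_l$, where $q_l = |S_l| + |E_l|$ counts terminals in $T_l$ (sources and sinks are disjoint since $\beta_l \ge 2$). To see this I would adapt the spanning-tree argument of Lemma~\ref{lem:color-bound}: pick any root, giving $2$ colors, then add each remaining node with exactly one ``new'' color. A non-root source has only outgoing edges, so its spanning-tree edge to its parent must be $v \to \text{parent}$, and its new color is $\text{first}(v)$, a start color; a non-root sink analogously contributes $\text{second}(v)$, an end color; a non-root internal node's new color is not forced to be terminal. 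A short case analysis on the root type (source-only, sink-only, or internal) shows that exactly $\beta_l + 1 - q_l$ of the $\beta_l + 1$ new colors are ``free'' to be chosen non-terminal.

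The second step is the observation that any globally internal color must be locally internal in every component in which it appears, so the global internal-color count is bounded by $\sum_l$ (local internal count in $T_l$), with single-node components contributing $0$. Thus $\text{internal}(\Gamma) \le \sum_{l:\beta_l \ge 2}(\beta_l + 1 - q_l)$. Choosing $t_l := \lfloor (\beta_l - 1)/k \rfloor + 1$ and applying Lemma~\ref{lem:tree-terminals} to each multi-node component gives $q_l \ge t_l + 1$, so $\text{internal}(\Gamma) \le \beta - \bigl(L_1 + \sum_{l:\beta_l \ge 2} t_l\bigr)$, where $L_1$ counts single-node components. It remains to verify the arithmetic inequality $L_1 + \sum_{l:\beta_l \ge 2} t_l \ge t$ whenever $\beta \ge (t-1)k + 1$. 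Writing $\beta - 1 = (L - 1) + \sum_{l:\beta_l \ge 2}(\beta_l - 1)$ and using $\beta_l - 1 = k\lfloor(\beta_l-1)/k\rfloor + r_l$ with $r_l \in \{0,\dots,k-1\}$, this reduces to $\lfloor (L - 1 + R)/k \rfloor \le L - 1$ where $R := \sum_{l \ge 2} r_l \le (k-1)L$; since $L - 1 + R \le kL - 1$, the floor is at most $L - 1$ and the bound is immediate.

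The main obstacle will be the spanning-tree argument of Step 1, in particular the case analysis on the root's type and the verification that the ``free'' new colors can always be realized by distinct values in $[n]$ lying outside the terminal set --- this is straightforward provided $n$ is large enough to accommodate the $\beta + L$ total color values, which is amply the case in our setting. A secondary subtlety is the bookkeeping around single-node components, which contribute nothing to the local internal count but do enter the global tallies of $\beta$ and $L$; accounting for them is what drives the elementary floor inequality in Step 3.
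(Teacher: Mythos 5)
Your argument is correct, but it takes a genuinely different route from the paper's. The paper's proof proceeds by strong induction on $t$: the base cases $t\in\{1,2\}$ are handled directly, and the inductive step splits off a single tree $T_0$ of size at least $k+1$ (or, if no such tree exists, observes that all components have size at most $k$ and counts them). Your proof replaces the induction entirely with a direct, non-inductive argument: you refine Lemma~\ref{lem:color-bound} to show that each tree component $T_l$ admits at most $\beta_l+1-q_l$ locally-internal colors, where $q_l$ is its exact terminal count (the paper uses only the weaker bound $q_l\ge t_l+1$ from Lemma~\ref{lem:tree-terminals}), sum over components, and close with a floor-inequality calculation. I verified the arithmetic: writing $\beta-1=(L-1)+kS+R$ with $S=\sum_{\beta_l\ge 2}\lfloor(\beta_l-1)/k\rfloor$ and $R\le(k-1)L$, the desired inequality reduces to $\lfloor(L-1+R)/k\rfloor\le L-1$, which holds since $L-1+R\le kL-1$. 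The per-component bound in your Step~1 is also sound: for $\beta_l\ge 2$ the first position of each source and the second position of each sink are unconstrained by any tree edge, so they sit in distinct singleton classes among the $\beta_l+1$ structural classes, and merging classes via value coincidences can only decrease the internal count. The trade-off is that your route requires the sharper local counting lemma (the $\beta_l+1-q_l$ bound), whereas the paper gets by with the coarser total bound $\beta_l+1$ at the cost of the induction scaffolding; your version is arguably cleaner and avoids some of the case bookkeeping in the paper's inductive step. One small phrasing issue: your worry at the end about "realizing the free new colors by distinct values in $[n]$" is a red herring here --- the statement is an upper bound on distinct internal colors, so you never need to realize all $\beta_l+1-q_l$ internal colors simultaneously.
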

\begin{proof}
    We remind the reader that summaries of all the definitions and terminologies we introduce are given in Sections~\ref{sec:coloring-scheme} and~\ref{sec:preliminary}. 
    When $\beta = 1$, there is only one node in the forest. 
    By Condition~\ref{cond:start-end} and Definition~\ref{def:colors}, we see that $\Gamma$ accepts no internal color. 
    
    When $\beta > 1$, we first consider $\Gamma$ to be a single tree $T$. 
    We assign colors to the terminals first.
    We note that the first color of a source or the second color of a sink can be assigned independently of all other colors in a graph,
    and by Condition~\ref{cond:start-end}, the first color of a source or the second color of a sink is not an internal color. 
    By Lemma~\ref{lem:tree-terminals}, a tree has at least $t+1$ terminals, which accepts $t+1$ distinct terminal colors. 
    Since each tree with $\beta$ nodes accepts at most $\beta+1$ distinct colors by Lemma~\ref{lem:color-bound}, fixing the sources' first colors and the sinks' second colors, we can assign at most 
    \begin{equation} \label{eq:tree-internal-color}
        (\beta+1)-(t+1) = \beta - t   
    \end{equation}
    distinct internal colors.

    If $t = 1$ and $2 \leq \beta\leq k$, we can also prove Lemma~\ref{lem:forest-color} directly. 
    Decomposing the forest into disjoint trees $\cup_{l=1}^K T_l$ where $K\geq 1$, by Lemma~\ref{lem:color-bound} and the fact that each tree accepts at least $2$ terminal colors,
    we see that the forest accepts at most 
    \begin{equation*}
        \sum_{l=1}^K (|T_l|+1-2) = \beta-K \leq \beta-1 = \beta - t
    \end{equation*} 
    distinct internal colors.

    We now prove the general case by induction.
    For the base case $t = 2$, if the forest $\Gamma$ is a single tree, the previous discussion has proved this statement true. 
    Otherwise, $\Gamma$ can be decomposed into disjoint trees $\cup_{l=1}^K T_l$ where $K\geq 2$.
    By Lemma~\ref{lem:color-bound}, it accepts at most $\sum_{l=1}^K (|T_l|+1-2) = \beta-K \leq \beta-2$ distinct internal colors, which establishes the claim for $t=2$.

    Suppose that our claim holds for all $t \in \{2,3,\dots,t'\}$. 
    We show that it also holds for $t = t'+1$.
    The claim has already been proved if the forest is a single tree.
    We now discuss two separate cases.
    \begin{enumerate}
        \item If the forest contains a tree $T_0$ such that $(t_0-1)k+1 \leq |T_0|\leq t_0 k$ for $2\leq t_0 \leq t'+1$, by Equation~\eqref{eq:tree-internal-color}, $T_0$ accepts at most $|T_0|-t_0$ distinct internal colors. 
    There remains 
    \begin{equation*}
        \beta-|T_0| \geq t'k+1-t_0 k = (t'-t_0+1-1)k+1
    \end{equation*} 
    nodes in $\Gamma\backslash T_0$.
    
    If $\beta-|T_0| \geq 2$, then by the induction hypothesis, $\Gamma\backslash T_0$ accepts at most $\beta-|T_0|-(t'-t_0+1)$ internal colors. 
    Thus, the forest accepts at most
    \begin{equation*}
        |T_0|-t_0 + \beta-|T_0|-t'+t_0-1 = \beta- t' - 1 = \beta- t
    \end{equation*}
    distinct internal colors as desired. 
    
    Otherwise, we have $|T_0| = \beta-1$, $t_0 = t'+1$ and $\Gamma\backslash T_0$ accepts no internal colors. 
    The forest accepts at most $(\beta-1) - (t'+1) < \beta-t$ in this case.
    \item If all the trees in the forest have a size at most $k$, we decompose the forest into disjoint trees $\cup_{l=1}^K T_l$. 
    It is straightforward to see that $K \geq t$ since
    \begin{equation*}
        (t-1)k+1 \leq \beta = \sum_{l=1}^K |T_l| \leq K \cdot k.
    \end{equation*}
    By Lemma~\ref{lem:color-bound}, the forest accepts $\sum_{l=1}^K (|T_l|-1) = \beta-K \leq \beta-t$ distinct internal colors, yields the desired bound.
    \end{enumerate}
    Combining these two cases, we see that under the induction hypothesis, our claim holds for $t = t'+1$, and the proof is complete.
\end{proof}


With the groundwork laid in Lemmas~\ref{lem:tree-terminals} and~\ref{lem:forest-color}, we can now prove a general statement for any weakly connected component of $\Gnew$.  

\begin{lemma}\label{lem:loops}
    Consider a weakly connected component $\Gnew_l$ of a graph $\Gnew$ constructed according to Rule~\ref{rule:(ii)} with $\beta_l$ nodes.
    If $\beta_l= 1$, we cannot assign any internal colors to $\Gnew_l$.
    If $\beta_l\geq \max\left\{(t-1)k+1, 2\right\}$ for $t \geq 1$ and $k \geq 2$, we can assign at most $\beta_l-t$ distinct internal colors to $\Gnew_l$.
\end{lemma}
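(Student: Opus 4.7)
My plan is to split the argument on $\beta_l$ and then reduce the general case to Lemma~\ref{lem:forest-color} by passing to a suitable spanning subforest.

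For the base case $\beta_l = 1$, the single vertex of $\Gnew_l$ has no edges and is therefore simultaneously a source and a sink of $\Gnew_l$. Since (as observed in Section~\ref{sec:coloring-scheme}) every source in $\Gnew$ is a start node and every sink is an end node, this vertex is both a start node and an end node. Its first color is then a start color and its second color is an end color, so both of its color slots are terminal, leaving no room for internal colors.

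For $\beta_l \ge 2$, I would reduce to Lemma~\ref{lem:forest-color} via a monotonicity-of-constraints argument. Removing any edge from $\Gnew_l$ drops one equality constraint on colors; equivalently, it can only split an equivalence class of color slots into two, and the newly split class never contains more terminal slots than the original. Hence if $\Gamma$ is any spanning subgraph of $\Gnew_l$ on the full vertex set $V(\Gnew_l)$, the maximum number of distinct internal colors for $\Gnew_l$ is at most the corresponding maximum for $\Gamma$. It therefore suffices to build a spanning subforest $\Gamma \subseteq \Gnew_l$ with $\beta_l$ vertices that satisfies Conditions~\ref{cond:start-end} and~\ref{cond:k-forest}, since then Lemma~\ref{lem:forest-color} applied to $\Gamma$ (with the same $\beta$, $t$, and $k$) yields internal colors $\le \beta_l - t$.

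The construction of $\Gamma$ proceeds in two phases. First, I would iteratively delete undirected-cycle edges from $\Gnew_l$ to obtain a spanning tree $T$ of $\Gnew_l$ (as an undirected graph), preferring, whenever possible, edges whose deletion does not produce a spurious terminal (a vertex that becomes a source or sink in $T$ without being a start or end node). The freedom to make this choice comes from the walk decomposition of $\Gnew_l$: every vertex lies on some walk $w_r$ from a start node to an end node, so edges of $\Gnew_l$ can be classified into ``walk-backbone'' edges (which I try to keep to preserve Condition~\ref{cond:k-forest}) and ``crossing'' edges (which are safer to drop). Second, any spurious source or sink that nonetheless appears is handled by a local modification: either an alternative cycle edge can be removed to fix it, or the node in question is shown to be a start/end node already, using the fact that in $\Gnew$ a source is automatically a start node and a sink is automatically an end node, combined with the observation that in a cycle isolated from the rest of the graph, all participating walks' start or end positions must collapse onto the cycle. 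Finally, Condition~\ref{cond:k-forest} in $\Gamma$ is verified by checking, for each vertex $v \in V(\Gamma)$, that the walk $w_r$ through $v$ survives (or can be replaced by another retained walk) so that $v$ is within directed distance $k-1$ of a retained source and sink.

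The main obstacle is executing the cycle-breaking procedure so that both Conditions~\ref{cond:start-end} and~\ref{cond:k-forest} hold at the end. Condition~\ref{cond:start-end} is delicate because naive cycle-breaking can create spurious terminals, and Condition~\ref{cond:k-forest} is delicate because deletions can lengthen the shortest source-to-sink paths through some vertex $v$. The core of the proof is a careful case analysis on the interaction between the cycle structure of $\Gnew_l$ and the walk decomposition $\{w_r : r \in R\}$, showing that whenever a spurious terminal seems unavoidable, the defining structure of $\Gnew$ forces that vertex to already be a start or end node, so Condition~\ref{cond:start-end} is preserved at no extra cost. Once $\Gamma$ is in place, Lemma~\ref{lem:forest-color} closes the argument.
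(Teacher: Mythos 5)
Your reduction to Lemma~\ref{lem:forest-color} via a spanning subforest of $\Gnew_l$ contains a genuine gap: it is not always possible to find a spanning subforest of $\Gnew_l$ that satisfies Condition~\ref{cond:start-end} with the \emph{original} start/end designations, and your fallback (``show the spurious terminal is already a start/end node'') does not hold in general. Consider $\Gnew_l$ with vertices $\{s, v_1, v_2, v_3, v_4, e\}$ and edges $s \to v_1$, $v_1 \to v_2 \to v_3 \to v_4 \to v_1$, $v_1 \to e$, where $s$ is a start node, $e$ is an end node, and $v_1,\dots,v_4$ are interior (this arises, e.g., from a single walk $(r,1),\dots,(r,7)$ of length $k=7$ that visits $v_1$ twice). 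Every spanning tree must delete exactly one cycle edge, and each of the four choices produces a spurious source or sink ($v_2$, $v_3$, or $v_4$), none of which contain any $(r,1)$ or $(r,k)$ and hence cannot be shown to already be start or end nodes. So the subforest you construct fails Condition~\ref{cond:start-end}, Lemma~\ref{lem:forest-color} does not apply, and the argument stalls. The monotonicity observation (deleting an edge only relaxes color constraints) is sound, but it does not help here: once a non-start node becomes a source, its first slot is classified as \emph{internal} by Definition~\ref{def:colors}, so the internal-color count of the subforest can strictly exceed $\beta_l - t$.

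The paper avoids this by doing something your proposal does not: it does not preserve the original start/end labels. After rerouting incoming edges around a cycle vertex (Operations (I.a)/(II.a)), it deletes one edge and \emph{marks the downstream node as a new start node}, even though that node contains no $(r,1)$. This reclassification is justified not combinatorially but by a counting observation: before deletion, the marked node's first color was forced to equal the second color of its predecessor (an internal slot that remains internal), so it never contributed a new distinct internal color; declaring it a start color afterward therefore does not shrink the internal-color count, and the relaxed graph still upper-bounds the original. The paper then carefully re-verifies Condition~\ref{cond:k-forest} after each such operation. To repair your proof you would need to incorporate this ``mark new terminals, with a color-determinacy justification'' step --- the pure edge-deletion strategy cannot reach a Condition~\ref{cond:start-end}-compliant forest in all cases.
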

\begin{proof}
    The case when $\beta_l=1$ follows immediately from Lemma~\ref{lem:forest-color}. 
    To prove the statement for $\beta_l\geq \max\left\{(t-1)k+1, 2\right\}$, we perform a series of operations to obtain a forest with at least as many internal colors as $\Gnew_l$ and apply Lemma~\ref{lem:forest-color} to obtain the desired bound. 
    The proof is involved, so we divide it into four steps.
    In Step 1, we convert $\Gnew_l$ into a new graph $\Gnewb_l$ satisfying Condition~\ref{cond:start-end}. 
    In Step 2, we verify that $\Gnewb_l$ satisfies Condition~\ref{cond:k-forest}. 
    In Step 3, we ascertain that there are two types of cycles in $\Gnewb_l$. 
    In Step 4, we show that we can remove these cycles and reduce $\Gnewb_l$ to a forest without changing the bounds on the number of distinct internal colors. 
    We then apply Lemma~\ref{lem:forest-color} to bound the number of internal colors in $\Gnew$ and finish the proof.
    We remind the reader that a summary of the terminologies and definitions used below can be found in Sections~\ref{sec:coloring-scheme} and~\ref{sec:preliminary}. 
    
    \paragraph{Step 1. Preprocess $\Gnew_l$.} We perform the following procedure on $\Gnew_l$:
    \begin{enumerate}
        \item \label{op:mark} For every node $u \in V(\Gnew_l)$, if $u$ connects to a start node $v_s$ through an directed edge $u \to v_s$, we mark $u$ as an end node. 
        If $u$ receives an edge $v_e \to u$ from an end node $v_e$, we mark $u$ as a start node. 
        \item \label{op:case1:op} Repeat the above procedure recursively until there is no new start or end node, then remove all incoming edges to the start nodes and all outgoing edges from the end nodes. 
    \end{enumerate} 
    The justification for the two operations is that if we first assign colors to the original start and end nodes in $\Gnew_l$, the first or second colors of these new start and end nodes, respectively, are fixed and are not internal colors.
    Thus, performing Operations~\ref{op:mark} and~\ref{op:case1:op} will not change the number of distinct internal colors.
    We denote the new graph as $\Gnewb_l$.
    It is straightforward to verify that Condition~\ref{cond:start-end} holds for $\Gnewb_l$ by Operation~\ref{op:case1:op}. 
    
    \paragraph{Step 2. Verify Condition~\ref{cond:k-forest} for $\Gnewb_l$.} 
    Take any node $v_0$ in $\Gnewb_l$.
    We will find a source and a sink in $\Gnewb_l$ such that Condition~\ref{cond:k-forest} holds for $v_0$.
    Suppose that $(r, \ell) \in v_0$ for some $(r, \ell) \in \calI$.
    We take the start node $u_s$ containing $(r, 1)$ and the end node $u_e$ containing $(r,k)$.
    By construction of $\Gnew$ in Rule~\ref{rule:(ii)}, we have  
    \begin{equation} \label{eq:v0-gnewl-k}
        d\left(u_s, v_0; \Gnew_l\right) + d\left(v_0, u_e; \Gnew_l\right) \leq k-1
    \end{equation}
    in $\Gnew_l$.
    Consider the path $P(u_s, v_0; \Gnew_l)$.
    If this path contains an edge $u_1 \to u_2$ removed in Operation~\ref{op:case1:op}, then $u_2$ must be both a start node and a source in $\Gnew_l$. 
    We only need to consider the path $P(u_2, v_0; \Gnew_l)$ since $u_2$ is a closer source to $v_0$ than $u_s$ in $\Gnew_l$.
    Repeating the above argument, there must be a source $\bar{u}_s$ in $\Gnewb_l$, satisfying 
    \begin{equation*}
        d(\bar{u}_s, v_0; \Gnewb_l) \leq d(u_s, v_0; \Gnew_l).
    \end{equation*} 
    By a similar argument, there must be a sink $\bar{u}_e$ in $\Gnewb_l$, satisfying 
    \begin{equation*}
        d(v_0, \bar{u}_e; \Gnewb_l) \leq d(v_0, \bar{u}_e; \Gnew_l).
    \end{equation*} 
    Thus, combined the above two displays with Equation~\eqref{eq:v0-gnewl-k}, we have 
    \begin{equation*}
        d(\bar{u}_s, v_0; \Gnewb_l) + d(v_0, \bar{u}_e; \Gnewb_l) \leq k-1
    \end{equation*}
    establishing Condition~\ref{cond:k-forest} for $\Gnewb_l$.

    \paragraph{Step 3. Reduce $\Gnewb_l$ to a forest.}
    We observe that there are two kinds of cycles in $\Gnewb_l$: cycles contain at least one branch node, and cycles contain no branch node. 
    We handle these two cases separately.
    \begin{enumerate}[label=\Roman*.]
        \item Suppose that $\Gnewb_l$ contains a cycle with a branch node $v_0$. 
        Without loss of generality, we assume that $v_0$ has two outgoing edges $v_0 \to v_1$ and $v_0 \to v_2$ in the cycle.
        If $v_0$ receives two incoming edges, we can reverse the directions of all the edges in $\Gnewb_l$ and apply the same argument described below. 
        Figure~\ref{fig:step3-case1} presents a graphical illustration of this case. 

        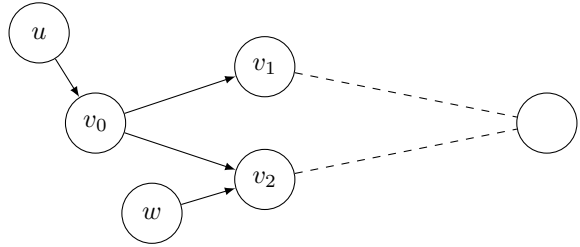
\begin{figure}[hb!]
            \centering
            \begin{tikzpicture}[scale=1.5, every node/.style={draw, circle, minimum size=0.8cm}]
            \node (v0) at (0, 0) {$v_0$};
            \node (u) at (-0.5, 0.8) {$u$};
            \node (v1) at (1.5, 0.5) {$v_1$};
            \node (v2) at (1.5, -0.5) {$v_2$};
            \node (w) at (0.5, -0.8) {$w$};
            \node (ve) at (4, 0) {};
            
            \draw[-latex] (u) -- (v0);
            \draw[-latex] (v0) -- (v1);
            \draw[-latex] (v0) -- (v2);
            \draw[-latex] (w) -- (v2);
            \draw[dashed] (v1) -- (ve);
            \draw[dashed] (v2) -- (ve);
            \end{tikzpicture}
            \caption{A graphical illustration of the first case, where $v_0$ is a branch node with two outgoing edges. Dashed lines and the empty node represent the other edges and nodes in the cycle. The directions of the dashed lines do not matter and are left unspecified. As a reminder, the figure does not necessarily contain every node and edge in $\Gnewb_l$.}\label{fig:step3-case1}
        \end{figure}

        Recall the definition of $s_\Gamma(\cdot)$ and $e_{\Gamma}(\cdot)$ for any graph $\Gamma$ from Equations~\eqref{eq:source} and~\eqref{eq:sink}.
        Without loss of generality, we assume that 
        \begin{equation}\label{eq:v1-v2-shorter}
            d\left(v_1, e_{\Gnewb_l}(v_1); \Gnewb_l\right) \leq d\left(v_2, e_{\Gnewb_l}(v_2); \Gnewb_l\right),
        \end{equation}
        and take the following operation:
        \begin{enumerate}[label=(\Roman{enumi}.\alph*)]
            \item \label{op:step3-case1-op1} For every node $w$ with an edge $w \to v_2$, we change this edge to $w \to v_1$.
        \end{enumerate}
        Since the vertices $v_1$ and $v_2$ both receive an incoming edge from $v_0$, they must share the same first color.
        Thus, Operation~\ref{op:step3-case1-op1} does not modify the color of $w$, and so does not affect the number of distinct internal colors we can assign. 
        It is straightforward to see that Condition~\ref{cond:start-end} still holds after this operation. 
        By Equation~\eqref{eq:v1-v2-shorter}, the distance $d(w, e_{\Gnewb_l}(w); \Gnewb_l)$ does not increase, and $d(s_{\Gnewb_l}(w), w; \Gnewb_l)$ remains the same.
        Therefore, Condition~\ref{cond:k-forest} still holds.
        
        We then take the following operation: 
        \begin{enumerate}[label=(\Roman{enumi}.\alph*)]\addtocounter{enumii}{1}
            \item \label{op:step3-case1-op2} Remove the edge $v_0 \to v_2$ and mark $v_2$ as a start node. 
        \end{enumerate}
        An illustration of the resulting graph is shown in Figure~\ref{fig:step3-case1-after}. 
        \begin{figure}[ht]
            \centering
            \begin{tikzpicture}[scale=1.5, every node/.style={draw, circle, minimum size=0.8cm}]
            \node (v0) at (0, 0) {$v_0$};
            \node (u) at (-0.5, 0.8) {$u$};
            \node (v1) at (1.5, 0.5) {$v_1$};
            \node (v2) at (1.5, -0.5) {$v_2$};
            \node (w) at (0.5, 0.8) {$w$};
            \node (ve) at (4, 0) {};
            
            \draw[-latex] (u) -- (v0);
            \draw[-latex] (v0) -- (v1);
            \draw[-latex] (w) -- (v1);
            \draw[dashed] (v1) -- (ve);
            \draw[dashed] (v2) -- (ve);
            \end{tikzpicture}
            \caption{A graphical illustration of the first case after performing the Operations~\ref{op:step3-case1-op1} and~\ref{op:step3-case1-op2}.}\label{fig:step3-case1-after}
        \end{figure}
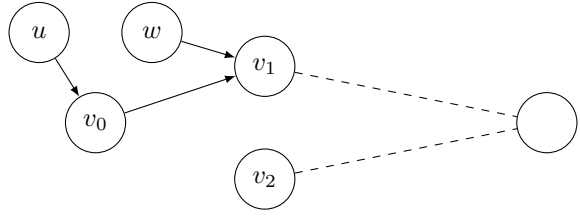
        In $\Gnewb_l$, the first color of $v_2$ is fixed once we fix the second color of $v_0$ by Observation~\ref{obs:color}. 
        Removing the edge $v_0 \to v_2$ and changing $v_2$ to a start node does not affect the number of internal colors, so the new graph obtained from Operation~\ref{op:step3-case1-op2} provides a valid upper bound for the number of distinct internal colors.

        Denote the new graph obtained from performing Operations~\ref{op:step3-case1-op1} and~\ref{op:step3-case1-op2} on $\Gnewb_l$ as $\Gnewt_l$.
        Verifying that $\Gnewt_l$ satisfies Condition~\ref{cond:start-end} is straightforward.
        We now demonstrate that Condition~\ref{cond:k-forest} holds.
        
        First, we observe that for any node $u_0$, if neither of the paths $P(u_0, e_{\Gnewb_l}(u_0); \Gnewb_l)$ nor $P(s_{\Gnewb_l}(u_0), u_0; \Gnewb_l)$ passes through $v_2$, then these two paths still exist in $\Gnewt_l$ and Condition~\ref{cond:k-forest} holds for $u_0$ in $\Gnewt_l$.
        Now consider the case where there is an edge $w \to v_2$ either in $P(u_0, e_{\Gnewb_l}(u_0); \Gnewb_l)$ or $P(s_{\Gnewb_l}(u_0), u_0; \Gnewb_l)$.
        
        In the first case, we must have $e_{\Gnewb_l}(u_0) = e_{\Gnewb_l}(v_2)$.
        It follows that 
        \begin{equation*}
        \begin{aligned}
            d\left(u_0, e_{\Gnewb_l}(u_0); \Gnewb_l\right) &= d\left(u_0, w; \Gnewb_l\right) + d\left(w, v_2; \Gnewb_l\right) + d\left(v_2, e_{\Gnewb_l}(v_2); \Gnewb_l\right)\\
            &\geq d\left(u_0, w; \Gnewb_l\right) + 1 + d\left(v_1, e_{\Gnewb_l}(v_1); \Gnewb_l\right),
        \end{aligned}
        \end{equation*}
        where the inequality follows from Equation~\eqref{eq:v1-v2-shorter}. 
        Since the shortest path from $u_0$ to $w$ does not contain any incoming edge to $v_2$ and $d\left(w, v_1; \Gnewt_l\right) = 1$ by Operation~\ref{op:step3-case1-op1}, the right hand side in the above display is equal to
        \begin{equation*}
        \begin{aligned}
            d\left(u_0, w; \Gnewt_l\right) + d\left(w, v_1; \Gnewt_l\right) + d\left(v_1, e_{\Gnewb_l}(v_1); \Gnewt_l\right)
            \geq d\left(u_0, e_{\Gnewt_l}(u_0); \Gnewt_l\right).
        \end{aligned}
        \end{equation*}
        Combining the above two displays, we conclude that
        \begin{equation}\label{eq:u0et}
            d\left(u_0, e_{\Gnewb_l}(u_0); \Gnewb_l\right) \geq d\left(u_0, e_{\Gnewt_l}(u_0); \Gnewt_l\right). 
        \end{equation}

        If there is an edge $w \to v_2$ in $P(s_{\Gnewb_l}(u_0), u_0; \Gnewb_l)$, then clearly
        \begin{equation*}
            d\left(s_{\Gnewb_l}(u_0), u_0; \Gnewb_l\right) \geq d\left(v_2, u_0; \Gnewb_l\right).
        \end{equation*}
        Since $v_2$ is a new start node and source in $\Gnewt_l$ by Operation~\ref{op:step3-case1-op2}, and the shortest path $P(v_2, u_0; \Gnewb_l)$ does not contain any edge incoming to $v_2$, from the above display we immediately have 
        \begin{equation}\label{eq:stu0}
            d\left(s_{\Gnewt_l}(u_0), u_0; \Gnewt_l\right) \leq d\left(v_2, u_0; \Gnewt_l\right) \leq d\left(s_{\Gnewb_l}(u_0), u_0; \Gnewb_l\right).
        \end{equation}
        
        Combining Equations~\eqref{eq:u0et} and~\eqref{eq:stu0}, we conclude that Condition~\ref{cond:k-forest} is not violated in $\Gnewt_l$.
        
        \item Suppose that $\Gnewb_l$ contains a cycle with no branch node. 
        Since Step 2 shows that $\Gnewb_l$ satisfies Conditions~\ref{cond:start-end}, no start or end node can exist within such a cycle. 
        There must be both an incoming and an outgoing edge connecting nodes inside the cycle to those outside it.

        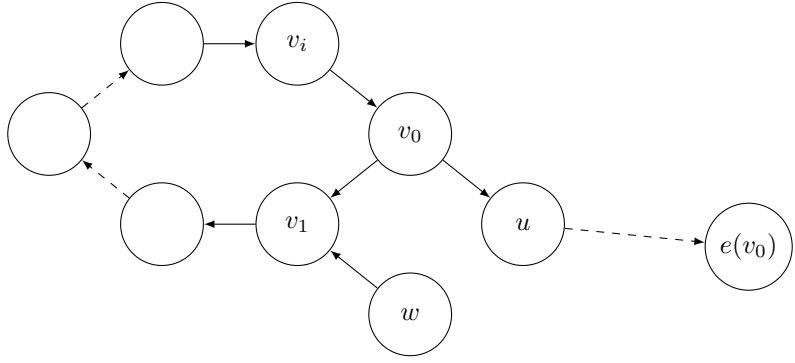
\begin{figure}[ht]
            \centering
            \begin{tikzpicture}[scale=1.5, every node/.style={draw, circle, minimum size=1.1cm}]
            \node (vs) at (0, 0) {};
            \node (vi1) at (1, 0.8) {};
            \node (vi2) at (2+0.2, 0.8) {$v_{i}$};
            \node (v0) at (3+0.2, 0) {$v_0$};
            \node (w) at (3+0.2, -1.6) {$w$};
            \node (v1) at (2+0.2, -0.8) {$v_1$};
            \node (vi3) at (1, -0.8) {};
            \node (u) at (4+0.2, -0.8) {$u$};
            \node (ev0) at (6+0.2, -1) {$e(v_0)$};
            
            \draw[dashed, -latex] (vs) -- (vi1);
            \draw[-latex] (vi1) -- (vi2);
            \draw[-latex] (vi2) -- (v0);
            \draw[-latex] (v0) -- (v1);
            \draw[-latex] (v0) -- (u);
            \draw[-latex] (v1) -- (vi3);
            \draw[-latex] (w) -- (v1);
            \draw[dashed, -latex] (vi3) -- (vs);
            \draw[dashed, -latex] (u) -- (ev0);
            \end{tikzpicture}
            \caption{A graphical illustration of the second case. Dashed edges indicate paths, which means that there might be multiple nodes and edges along them. As a reminder, the figure does not necessarily include all the nodes and edges in the graph.}\label{fig:step3-case2}
        \end{figure}

        Denote the cycle as $P_0$.
        Suppose that $v_0 \in V(P_0)$ such that for any other node $v_i \in V(P_0)$,
        \begin{equation} \label{eq:v0-shortest}
            d\left(v_0, e_{\Gnewb_l}(v_0); \Gnewb_l\right) \leq d\left(v_i, e_{\Gnewb_l}(v_i); \Gnewb_l\right).
        \end{equation}
        Then there must be an outgoing edge from $v_0$ to a node $u \notin V(P_0)$, and
        \begin{equation} \label{eq:u-in-v0ev0}
            u \in V(P(v_0, e_{\Gnewb_l}(v_0); \Gnewb_l)). 
        \end{equation}
        A graphical illustration is included in Figure~\ref{fig:step3-case2}. 
        
        Denote the vertex following $v_0$ in $P_0$ as $v_1$, meaning that the edge $v_0 \to v_1$ is in $P_0$.
        Then $u$ and $v_1$ share the same first color.
        We perform operations similar to Operation~\ref{op:step3-case1-op1} on $\Gnewb_l$:
        \begin{enumerate}[label=(\Roman{enumi}.\alph*)]
            \item \label{op:step3-case2-op1} For every node $w \notin V(P_0)$ with an edge $w \to v_1$, we change this edge to $w \to u$.
            \item \label{op:step3-case2-op2} Remove the edge $v_0 \to v_1$ and mark $v_1$ as a start node. 
        \end{enumerate}
        An illustration of the resulting graph is shown in Figure~\ref{fig:step3-case2-after}.
        Since $u$ and $v_1$ share the same first color, Operation~\ref{op:step3-case2-op1} does not modify the color of $w$, and therefore, it does not affect the number of unique internal colors we can assign. 
        In $\Gnewb_l$, the first color of $v_1$ is fixed once we determine the second color of $v_0$.
        Marking $v_1$ as a start node does not alter the number of internal colors, ensuring that the new graph still provides a valid bound on the number of unique internal colors.

        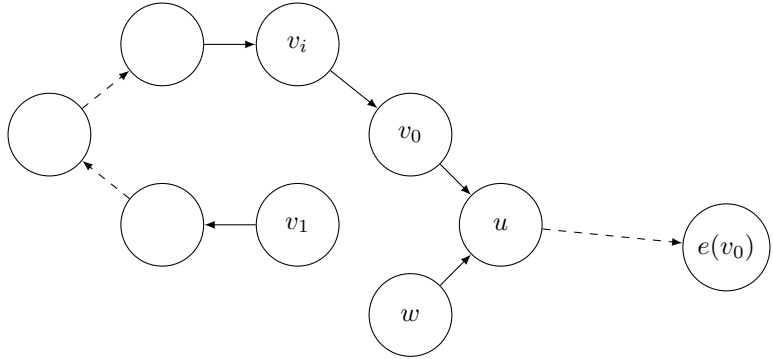
\begin{figure}[ht]
            \centering
            \begin{tikzpicture}[scale=1.5, every node/.style={draw, circle, minimum size=1.1cm}]
            \node (vs) at (0, 0) {};
            \node (vi1) at (1, 0.8) {};
            \node (vi2) at (2+0.2, 0.8) {$v_{i}$};
            \node (v0) at (3+0.2, 0) {$v_0$};
            \node (w) at (3+0.2, -1.6) {$w$};
            \node (v1) at (2+0.2, -0.8) {$v_1$};
            \node (vi3) at (1, -0.8) {};
            \node (u) at (4, -0.8) {$u$};
            \node (ev0) at (6, -1) {$e(v_0)$};
            
            \draw[dashed, -latex] (vs) -- (vi1);
            \draw[-latex] (vi1) -- (vi2);
            \draw[-latex] (vi2) -- (v0);
            \draw[-latex] (v0) -- (u);
            \draw[-latex] (v1) -- (vi3);
            \draw[-latex] (w) -- (u);
            \draw[dashed, -latex] (vi3) -- (vs);
            \draw[dashed, -latex] (u) -- (ev0);
            \end{tikzpicture}
            \caption{A graphical illustration of the second case after performing Operations~\ref{op:step3-case2-op1} and~\ref{op:step3-case2-op2}. Dashed edges indicate paths, meaning multiple nodes and edges might be along them. As a reminder, the figure does not necessarily include all the nodes and edges in the graph.}\label{fig:step3-case2-after}
        \end{figure}

        Denote the new graph obtained from performing Operations~\ref{op:step3-case2-op1} and~\ref{op:step3-case2-op2} on $\Gnewb_l$ as $\Gnewt_l$. 
        Verifying that $\Gnewt_l$ satisfies Condition~\ref{cond:start-end} is straightforward, and it remains to show that $\Gnewt_l$ satisfies Condition~\ref{cond:k-forest}.
        
        Similar to the discussion in the first case, for any node $u_0$ in $\Gnewb_l$, if neither $P(u_0, e_{\Gnewb_l}(u_0); \Gnewb_l)$ nor $P(s_{\Gnewb_l}(u_0), u_0; \Gnewb_l)$ passes through $v_1$ in $\Gnewb_l$, then these two paths still exist in $\Gnewt_l$ and Condition~\ref{cond:k-forest} holds for such $u_0$ in $\Gnewt_l$.
        
        Consider the case where there is an edge $w \to v_1$ in either $P(u_0, e_{\Gnewb_l}(u_0); \Gnewb_l)$ or $P(s_{\Gnewb_l}(u_0), u_0; \Gnewb_l)$.
        In the first case, we must have $e_{\Gnewb_l}(u_0) = e_{\Gnewb_l}(v_1)$, and it follows that 
        \begin{equation}\label{eq:u0eu0}
        \begin{aligned}
            d\left(u_0, e_{\Gnewb_l}(u_0); \Gnewb_l\right) &= d\left(u_0, w; \Gnewb_l\right) + d\left(w, v_1; \Gnewb_l\right) + d\left(v_1, e_{\Gnewb_l}(v_1); \Gnewb_l\right)\\
            &\geq d\left(u_0, w; \Gnewb_l\right) + 1 + d\left(v_0, e_{\Gnewb_l}(v_0); \Gnewb_l\right),
        \end{aligned}
        \end{equation}
        where the inequality follows from Equation~\eqref{eq:v0-shortest}. 
        Since the paths $P(u_0, w; \Gnewb_l)$ and $P(v_0, e_{\Gnewb_l}(v_0); \Gnewb_l)$ do not contain any incoming edge to $v_1$ (otherwise, they are not shortest paths), we have 
        \begin{equation*} 
            P(v_0, e_{\Gnewt_l}(v_0); \Gnewt_l) = P(v_0, e_{\Gnewb_l}(v_0); \Gnewb_l) \quad \text{and} \quad P(u_0, w; \Gnewt_l) = P(u_0, w; \Gnewb_l).
        \end{equation*}
        Applying the above display and Equation~\eqref{eq:u-in-v0ev0} to Equation~\eqref{eq:u0eu0}, we obtain 
        \begin{equation*}\label{eq:u0eu0-rhs}
        \begin{aligned}
        d\left(u_0, e_{\Gnewb_l}(u_0); \Gnewb_l\right)
        &\ge
            d\left(u_0, w; \Gnewt_l\right) + 1 + d\left(v_0, u; \Gnewt_l\right) + d\left(u, e_{\Gnewb_l}(v_0); \Gnewt_l\right). 
        \end{aligned}
        \end{equation*}
        From the definition of $d(\cdot, \cdot)$, we have 
        \begin{equation*}
        \begin{aligned}
            d\left(u_0, w; \Gnewt_l\right) + d\left(w, u; \Gnewt_l\right) + d\left(u, e_{\Gnewb_l}(v_0); \Gnewt_l\right)
            \geq d\left(u_0, e_{\Gnewt_l}(u_0); \Gnewt_l\right). 
        \end{aligned}
        \end{equation*}
        Since $d\left(w, u; \Gnewt_l\right) = 1$ by Operation~\ref{op:step3-case2-op1}, combining the above two displays yields 
        \begin{equation}\label{eq:gnewl-u0-e}
            d\left(u_0, e_{\Gnewt_l}(u_0); \Gnewt_l\right) \leq d\left(u_0, e_{\Gnewb_l}(u_0); \Gnewb_l\right). 
        \end{equation}

        For the second case, if there is an edge $w \to v_1$ in $P(s_{\Gnewb_l}(u_0), u_0; \Gnewb_l)$, then clearly,
        \begin{equation*}
            d\left(s_{\Gnewb_l}(u_0), u_0; \Gnewb_l\right) \geq d\left(v_1, u_0; \Gnewb_l\right).
        \end{equation*}
        Since $v_1$ is a new start node and source in $\Gnewt_l$ by Operation~\ref{op:step3-case2-op2}, and the path $P(v_1, u_0; \Gnewb_l)$ does not contain any incoming edge to $v_1$ (otherwise, it is not the shortest path), it follows from the above display that
        \begin{equation}\label{eq:gnewl-s-u0}
            d\left(s_{\Gnewt_l}(u_0), u_0; \Gnewt_l\right) \leq d\left(v_1, u_0; \Gnewt_l\right) \leq d\left(s_{\Gnewb_l}(u_0), u_0; \Gnewb_l\right).
        \end{equation}
        Combining Equations~\eqref{eq:gnewl-u0-e} and~\eqref{eq:gnewl-s-u0}, we conclude that Condition~\ref{cond:k-forest} holds for $\Gnewt_l$.
    \end{enumerate}

    \paragraph{Step 4. Apply Lemma~\ref{lem:forest-color}.} Finally, by performing Operations~\ref{op:step3-case1-op1}, \ref{op:step3-case1-op2}, \ref{op:step3-case2-op1} and~\ref{op:step3-case2-op2}, we remove one edge each time. 
    These steps gradually simplify the graph structure while maintaining the necessary bounds on the number of distinct internal colors.
    Since there are only a finite number of edges, the above procedure eventually produces a cycle-free graph. 
    Noting that no new node is introduced in the above procedure, we successfully reduce $\Gnew_l$ to a forest with $\beta_l$ nodes, satisfying Conditions~\ref{cond:start-end} and~\ref{cond:k-forest}. 
    Applying Lemma~\ref{lem:forest-color} yields the desired result. 
\end{proof}

\subsubsection{Results for bounding $|\psi^{-1}(G)|$}\label{sec:results}

With results in Lemmas~\ref{lem:color-bound} through \ref{lem:loops}, we can provide bounds for $|\psi^{-1}(G)|$. 
Based on Rule~\ref{rule:(ii)}, each $G$ corresponds to a $\Gnew$. 
Given a $\Gnew$, one can also recover $G$ by identifying which vertices of $G$ are included in the same vertex of $\Gnew$.
Thus, there is a bijection $\eta: G \mapsto \Gnew$ and bounding $|\psi^{-1}(G)|$ can be achieved by bounding $|(\eta \circ \psi)^{-1} (\Gnew)|$.
For notational simplicity, we define a mapping $\pi = \eta \circ \psi$, which maps $\mJ$ to $\Gnew$. 

From the coloring scheme described in Definition~\ref{def:coloring} and Observation~\ref{obs:color-value}, we see that $|\pi^{-1}(\Gnew)|$ can be controlled by bounding the number of distinct colors of each type defined in Definition~\ref{def:colors}. 
Since each connected component $\Gnew_l$ of $\Gnew$ can be colored independently, we can first bound the number of terms associated with each $\Gnew_l$ separately, then combine to get a bound for $\Gnew$. 
Lemma~\ref{lem:term-bound-cc} first considers the bound for each $\Gnew_l$. 

\begin{lemma}\label{lem:term-bound-cc}
    Consider a weakly connected component $\Gnew_l$ of a graph $\Gnew$ constructed according to Rule~\ref{rule:(ii)} with $\beta_l$ nodes.
    Without loss of generality, assume that $0\leq N_x \leq N_y \leq n$, where $N_x = \|\vx\|_0$, $N_y = \|\vy\|_0$. Let $S_{xy} = |\{i \in [n]: x_i \neq 0, y_i \neq 0\}|$.
    Recall that the integer $k \geq 2$ is the same as in the term $\vx^\top \mH^k \vy$. 
    The number of terms associated with $\Gnew_l$ is bounded by
    \begin{equation}\label{eq:term-bound-ccs}
    \bar{\tau}_{\beta_l} := \begin{cases}
        S_{xy}, & \mbox{ for } \beta_l = 1,\\
        N_x N_y n^{\beta_l-2}, & \mbox{ for } 2 \leq \beta_l < k,\\
        N_x N_y n^{\beta_l-1}, & \mbox{ for } \beta_l = k,\\
        N_x N_y^{t} n^{\beta_l-t},  & \mbox{ for } \beta_l \geq (t-1)k+1, t\geq 2, t \mbox{ is an integer}.
    \end{cases}
    \end{equation}
\end{lemma}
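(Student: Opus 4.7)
The plan is to bound $|\pi^{-1}(\Gnew_l)|$ by counting color assignments to the vertices of $\Gnew_l$. By Observation~\ref{obs:color-value}, once the coloring of Definition~\ref{def:coloring} is fixed, each distinct start color contributes $N_x$ possible values, each distinct end color contributes $N_y$, and each distinct internal color contributes $n$. Writing $n_s, n_e, n_i$ for the numbers of these three color types assigned to $\Gnew_l$, the number of compatible $\mJ$ is at most $N_x^{n_s} N_y^{n_e} n^{n_i}$. Under the assumption $N_x \le N_y \le n$, maximizing this corresponds to pushing as many color slots as possible onto internal colors, then onto end colors, and lastly onto the start color budget.

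The controlling constraints come from the two previous lemmas: Lemma~\ref{lem:color-bound} gives $n_s + n_e + n_i \le \beta_l + 1$, and Lemma~\ref{lem:loops} gives $n_i \le \beta_l - t$ whenever $\beta_l \ge (t-1)k + 1$, with $n_i = 0$ forced when $\beta_l = 1$. Since every weakly connected component of the reduced graph described in Step 1 of the proof of Lemma~\ref{lem:loops} contains a source and a sink, which by Condition~\ref{cond:start-end} are a start and an end node respectively, we also have $n_s \ge 1$ and $n_e \ge 1$. The case $\beta_l = 1$ is handled directly: an isolated vertex in $\Gnew_l$ forces every contributing row to collapse so that $j_1^{(r)} = j_2^{(r)} = \cdots = j_{k+1}^{(r)}$, and the common value must lie in $\supp(\vx) \cap \supp(\vy)$, yielding $S_{xy}$ choices. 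For $\beta_l = k$ and for $\beta_l \ge (t-1)k + 1$ with $t \ge 2$, substituting $n_i = \beta_l - t$, $n_s = 1$, $n_e = t$ saturates both constraints simultaneously and yields $N_x N_y^t n^{\beta_l - t}$, which collapses to $N_x N_y n^{k-1}$ in the $\beta_l = k$ sub-case via $t = 1$.

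The regime $2 \le \beta_l < k$ is where the claimed bound $N_x N_y n^{\beta_l - 2}$ is a factor of $n$ tighter than what Lemma~\ref{lem:loops} with $t = 1$ gives directly. The extra saving should come from the observation that when $\beta_l < k$, the length-$(k-1)$ path $v_{r,1} \to \cdots \to v_{r,k}$ induced by any contributing row cannot be a simple path inside $\Gnew_l$ and therefore revisits a vertex, producing a closed walk. Tracing the first and second colors around this walk yields a nontrivial identification among the colors on the cycle, which tightens the total-color ceiling from $\beta_l + 1$ to $\beta_l$. Combined with $n_s, n_e \ge 1$, this forces $n_i \le \beta_l - 2$ and delivers the claimed bound. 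A sanity check in the smallest subcase $\beta_l = 2$, $k = 3$ confirms the picture: the row's path must take the form $v_1 \to v_2 \to v_1$, which propagates only two colors throughout the indices $j_1^{(r)}, \dots, j_4^{(r)}$ and admits no independent internal color.

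The hardest part will be making this cycle-based tightening rigorous for arbitrary $2 \le \beta_l < k$, since $\Gnew_l$ can carry several overlapping cycles coming from different rows and one must show that at least one cycle equation is genuinely independent of the spanning-tree coloring used in Lemma~\ref{lem:color-bound} rather than vacuously satisfied. A clean route is to revisit that spanning-tree construction, process the non-tree edges contributed by the rows one at a time, and argue that the first such edge, which necessarily exists once $k - 1 > \beta_l - 1$, either merges two distinct internal colors or identifies an internal color with a terminal one, reducing $n_i$ by one compared to the tree bound in either case.
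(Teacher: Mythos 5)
Your handling of $\beta_l = 1$, $\beta_l = k$, and $\beta_l \ge (t-1)k+1$ for $t \ge 2$ is correct and is essentially the paper's argument: feed Lemmas~\ref{lem:color-bound} and~\ref{lem:loops} into a color budget $N_x^{n_s} N_y^{n_e} n^{n_i}$ and optimize under $N_x \le N_y \le n$. The gap is exactly where you flag it, the regime $2 \le \beta_l < k$, and the issue is not merely that your argument needs to be ``made rigorous'' --- as stated the claim you propose to prove is false.

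You assert that the first non-tree edge ``either merges two distinct internal colors or identifies an internal color with a terminal one, reducing $n_i$ by one in either case.'' There is a third possibility you leave out: it can merge two \emph{terminal} colors, most directly via an edge $v_e \to v_s$ from an end node to a start node, or via a self-loop on a node that is simultaneously a start and an end node. Then the total color count does drop to $\beta_l$, but $n_i$ is untouched. Worse, your step ``combined with $n_s, n_e \ge 1$, this forces $n_i \le \beta_l - 2$'' silently assumes the start color and the end color are distinct. If the merge makes them the same color, the total-color constraint becomes $1 + n_i \le \beta_l$, so $n_i$ can still reach $\beta_l - 1$, and the resulting bound $S_{xy}\, n^{\beta_l - 1}$ exceeds the target $N_x N_y\, n^{\beta_l - 2}$ whenever $S_{xy}\, n > N_x N_y$, which is entirely possible under $N_x \le N_y \le n$.

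The paper preempts this degeneracy with a surgical step you omit: before counting anything, delete the edge $v_e \to v_s$ if present --- a legitimate relaxation, since removing an edge only weakens the color constraints and can only enlarge the count --- after which the start and end colors can be chosen distinct. The paper then splits into two subcases rather than reasoning about independence of cycle equations: either some $(r,\ell)$ and $(r,\ell+1)$ land in the same node, forcing that node's first and second colors to coincide and capping the color total at $\beta_l$; or there is a genuine cycle, which is handled by reusing the edge-removal-and-remarking surgery from Step 3 of the proof of Lemma~\ref{lem:loops}. Operating on the graph itself rather than on the algebraic constraint system is precisely how the paper sidesteps the ``is the cycle equation genuinely independent of the spanning-tree coloring?'' question that you correctly single out as the hardest part of a direct argument. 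Without the $v_e \to v_s$ removal and the explicit treatment of the coinciding-terminal case, your route does not close.
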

\begin{proof}
    We refer the reader to Sections~\ref{sec:coloring-scheme} and~\ref{sec:preliminary} for summaries of the definitions and terminologies introduced for the proof. 

    When $\beta_l \geq  \max\left\{k, (t-1)k+1\right\}$ for some integers $t \geq 1$ and $k \geq 2$, by Lemma~\ref{lem:loops}, $\Gnew_l$ accepts at most $\beta_l-t$ internal colors, which contributes a factor of $n^{\beta_l - t}$ to the number of terms. 
    By Lemma~\ref{lem:color-bound}, $\Gnew_l$ accepts at most $\beta_l + 1$ total unique colors, so we can assign at most $t+1$ additional unique terminal colors to it. 
    Since every graph must have at least a start and an end node, we can assign a start color and $t$ end color and obtain the third and last bounds.

    It remains to obtain the bound for $1 \leq \beta_l < k$. 
    Suppose that a node in $\Gnew_l$ contains the vertex $(r, \ell) \in \calI$. Then, the vertex sequence
    \begin{equation*}
        (r,1), (r,2), \ldots, (r,k)
    \end{equation*}
    appear in vertices of $\Gnew_l$. 
    Since there are fewer than $k$ nodes in $\Gnew_l$, at least two vertices in the sequence above must be in the same node of $\Gnew_l$. 
    When $\beta_l = 1$, all vertices in the sequence above are in the same node of $\Gnew_l$, so $(j_l^{(r)}, j_{l+1}^{(r)})$ for $l \in [k]$ must all be the same, which further implies that $j_1^{(r)} = j_2^{(r)} = \cdots = j_{k+1}^{(r)}$. 
    Denote the value of these index as $\phi^{(0)}$, it is straightforward to see that $\phi^{(0)} \in \{i \in [n]: x_i \neq 0, y_i \neq 0\}$, yielding the first bound.

    Consider $2\leq \beta_l < k$.
    Suppose that $v_s$ and $v_e$ are two nodes in $\Gnew_l$, such that $(r,1) \in v_s$ and $(r,k) \in v_e$. 
    We note that if there is an edge $v_e \to v_s$, it can be removed without loss of generality: since there is always a path in $\Gnew_l$ that travels from $v_s$ to $v_e$, $\Gnew_l$ remains weakly connected after removing the edge $v_e \to v_s$.
    Additionally, since edges restrict how colors can be assigned by Observation~\ref{obs:color}, removing edges in $\Gnew_l$ reduces these constraints and does not decrease the number of distinct colors allowed in $\Gnew_l$, providing an upper bound on the number of distinct colors. 
    Hence, we assume without loss of generality that the edge $v_e \to v_s$ does not exist in $\Gnew_l$, so we can always assign a start color and an end color to $\Gnew_l$. 
    
    If there is no cycle in $\Gnew_l$, since there are less than $k$ nodes in $\Gnew_l$, there must be a $(r, \ell) \in \calI$ such that both $(r, \ell)$ and $(r, \ell+1)$ are in the same node $v$ of $\Gnew_l$. 
    By Rule~\ref{rule:(ii)} and the coloring scheme in Section~\ref{sec:coloring-scheme}, the first and second colors of $v$ must be the same color. 
    Following the proof of Lemma~\ref{lem:color-bound}, one sees that the number of total unique colors $\Gnew_l$ accepts is bounded by $\beta_l$. 

    If there is at least one cycle in $\Gnew_l$, we apply the techniques introduced in Step 3 of the proof of Lemma~\ref{lem:loops}.
    We remove an edge from $\Gnew_l$ and mark one node as a start or end node. 
    This keeps $\Gnew_l$ weakly connected while allowing it to have one extra terminal color.
    Removing this terminal color and applying Lemma~\ref{lem:color-bound}, we see that the total number of unique colors in $\Gnew_l$ is bounded by $\beta_l$. 
    Thus, after assigning the start and end colors to $\Gnew_l$, the number of unique internal colors is bounded by $\beta_l-2$.
    And it follows that the total number of terms contributed by $\Gnew_l$ is bounded by $N_x N_y n^{\beta_l - 2}$ as desired.
\end{proof}

Using the bound for the number of terms associated with each connected component of $\Gnew$ in Lemma~\ref{lem:term-bound-cc}, we obtain a bound for $|\pi^{-1}(\Gnew)|$, which is also a bound for $|\psi^{-1}(G)|$.

\begin{lemma} \label{lem:term-bound}
    Without loss of generality, suppose that $0\leq N_x \leq N_y\leq n$ (otherwise, we switch the role of $\vx$ and $\vy$).
    For a graph $\Gnew$ constructed from Rule~\ref{rule:(ii)} with $L$ nodes, the number of terms associated with $\Gnew$ in Equation~\eqref{eq:moment-i}, $|\pi^{-1}(\Gnew)|$ , is bounded by 
    \begin{equation} \label{eq:term-bound}
    \tau_L := \begin{cases}
        S_{xy}, & \mbox{ for } L = 1, \\
        N_x N_y n^{L-2}, & \mbox{ for } 2 \leq L < k, \\
        N_x^{t-1} N_y^{t} n^{L-t} & \mbox{ for } (t-1)k+1 \leq L < tk, \; 2 \leq t \leq p/2,\\
        N_x^{t} N_y^{t} n^{L-t}, & \mbox{ for } L = tk, \; 1 \leq t \leq p/2.
    \end{cases}
    \end{equation}
\end{lemma}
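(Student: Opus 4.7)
The plan is to reduce Lemma~\ref{lem:term-bound} to Lemma~\ref{lem:term-bound-cc} by decomposing $\Gnew$ into weakly connected components, then verifying that the product of per-component bounds telescopes to the desired $\tau_L$ via a short case analysis on $L$.

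First, I would write $\Gnew = \bigsqcup_{l=1}^{K} \Gnew_l$ with $|V(\Gnew_l)| = \beta_l$. Because edges of $\Gnew$ only constrain the colorings within the same weakly connected component (by Observation~\ref{obs:color}), the colorings of distinct components may be chosen independently, so
\[
|\pi^{-1}(\Gnew)| \leq \prod_{l=1}^{K} \bar{\tau}_{\beta_l},
\]
where $\bar{\tau}_{\beta_l}$ is supplied by Lemma~\ref{lem:term-bound-cc}. Grouping components by size, let $a$, $b$, $c$, $d$ count those with $\beta_l = 1$, $2 \leq \beta_l \leq k-1$, $\beta_l = k$, and $\beta_l \geq k+1$, respectively; for the last family, let $t_l = \lceil \beta_l / k \rceil \geq 2$ and set $T = \sum t_l$. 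Bounding each singleton by $S_{xy} \leq N_x$, substituting the four cases of Lemma~\ref{lem:term-bound-cc}, and simplifying the $n$-exponent via the identity $L = a + \Sigma_b + ck + \Sigma_d$ (where $\Sigma_b, \Sigma_d$ are the total sizes of the small- and big-component families), the product collapses to
\[
N_x^{\,a+b+c+d}\, N_y^{\,b+c+T}\, n^{\,L - (a + 2b + c + T)}.
\]

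To complete the argument, I would compare this expression to $\tau_L$ using the elementary identity $N_x^{\alpha} N_y^{\beta} n^{\gamma} = N_x^{\alpha+\beta+\gamma}(N_y/N_x)^{\beta+\gamma}(n/N_y)^{\gamma}$, which, since $N_x \leq N_y \leq n$, reduces the inequality $N_x^{\alpha} N_y^{\beta} n^{\gamma} \leq 1$ to the three partial-sum conditions $\alpha + \beta + \gamma \leq 0$, $\beta + \gamma \leq 0$, and $\gamma \leq 0$. Taking $(\alpha, \beta, \gamma)$ as the differences of the exponents above with those of $\tau_L$ in each of the four ranges of $L$, these conditions simplify to $c + d \leq t$ (or $c + d \leq t - 1$ in the intermediate range), $-a - b \leq 0$, and $T + a + 2b + c \geq t$. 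The first follows from $\Sigma_d \geq (k+1) d$ combined with the constraint on $L$; the second is automatic; and the third follows from $T \geq \Sigma_d / k$, the identity for $L$, and the bound $\Sigma_b \leq (k-1) b$ together with $k \geq 2$, which together force the integer quantity $T + a + 2b + c$ to strictly exceed $t - 1$, hence to be at least $t$.

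The main obstacle I anticipate is the exponent bookkeeping for the third partial-sum condition: one must convert the fractional estimate $T \geq \Sigma_d / k$ arising from the ceiling relation into an integer inequality, which requires exploiting $k \geq 2$ and $\Sigma_b \leq (k - 1) b$ so that the resulting strict inequality promotes to the needed integer gap. Once the exponent calculus is set up correctly, each of the four cases in the statement follows by direct substitution, and no new combinatorial input beyond Lemma~\ref{lem:term-bound-cc} is required.
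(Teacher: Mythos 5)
Your proof is correct and follows the same overall decomposition as the paper's: split $\Gnew$ into weakly connected components, apply Lemma~\ref{lem:term-bound-cc} componentwise, and multiply. Where you diverge is in verifying that the resulting product does not exceed $\tau_L$. The paper argues by an extremal/exchange step: for $\beta$ nodes with $(t-1)k \leq \beta < tk$, it compares three ways of partitioning them into components and shows, via $N_x N_y^t/n^t \leq (N_x N_y/n)^{t-2}(N_x N_y^2/n^2)$ and $N_x^2 N_y^2/n^3 \leq N_x N_y^2/n^2$, that the configuration maximizing $|\calC_1|$ dominates; it then substitutes the worst-case counts into Equation~\eqref{eq:num-terms}. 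You instead write the product for a generic partition as $N_x^{a+b+c+d}N_y^{b+c+T}n^{L-(a+2b+c+T)}$ and reduce the comparison with $\tau_L$ to three monotone partial-sum inequalities on the exponents, verified from $\Sigma_d \geq (k+1)d$, $\Sigma_b \leq (k-1)b$ (total sizes of the big and small components), and the range of $L$, invoking integrality of $T+a+2b+c$ only in the intermediate range (for $L = tk$ the bound is direct, since the residual terms $a(1-1/k)+b(k+1)/k$ are nonnegative without rounding). Both arguments rest on the same ingredients ($N_x \leq N_y \leq n$ plus the size and integrality constraints); the paper's route identifies which configuration is extremal, while yours is a uniform algebraic check that gives less structural insight but is arguably easier to audit case by case. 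Your treatment of singletons (bound $S_{xy}$ by $N_x$ and carry them through the general verification) is also a bit cleaner than the paper's informal dismissal of $\beta_l = 1$ components.
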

\begin{proof}
    The case $L=1$ follows directly from Lemma~\ref{lem:term-bound-cc}. 
    For $L \geq 2$, suppose that $\Gnew = \cup_{l=1}^K \Gnew_l$ contains $K$ connected components each of size $\beta_l$.
    Since $S_{xy} \leq \min\left\{N_x, N_y\right\}$, if any $\beta_l = 1$, $\Gnew$ contributes less terms compared to the case with no $\beta_l = 1$. 
    Thus, we only need to consider $\beta_l \geq 2$ for all $\Gnew_l$. 
    
    Suppose that for $l \in \calC_0 \subseteq [K]$, $2\leq \beta_l < k$; for $l \in \calC_1\subseteq [K]$, $\beta_l = k$, and for $l \in \calC_t\subseteq [K]$, $tk \geq \beta_l \geq(t-1) k+1$ for $2 \leq t \leq p/2$. 
    By Lemma~\ref{lem:term-bound-cc}, we obtain the bound for $|\pi^{-1}(\Gnew)|$
    \begin{equation}\label{eq:num-terms}
    \begin{aligned}
        \prod_{t=0}^{p/2}\prod_{l \in \calC_t} \bar{\tau}_{\beta_l}
        &= \left(n^{-2} N_x N_y\right)^{|\calC_0|} n^{\sum_{l \in \calC_0} \beta_l} ~ \prod_{t=1}^{p/2} \left(n^{-t} N_x N_y^t\right)^{|\calC_t|} n^{\sum_{l \in \calC_t} \beta_l} \\
        &= n^{L} ~ \left(\frac{N_x N_y}{n^2}\right)^{|\calC_0|} ~\prod_{t=1}^{p/2} \left(\frac{N_x N_y^t}{n^t}\right)^{|\calC_t|},
    \end{aligned}
    \end{equation}
    where $\bar{\tau}_{\beta_l}$ is defined in Equation~\eqref{eq:term-bound-ccs}, which bounds the number of terms associated with $\Gnew_l$ when $\Gnew_l$ has $\beta_l$ nodes. 
    
    For $2 \leq L < k$, the above display is given by
    \begin{equation*}
        n^{L} \cdot \left(\frac{N_x N_y}{n^2}\right)^{|\calC_0|},
    \end{equation*}
    which is maximized when $|\calC_0| = 1$ since $N_x \leq N_y \leq n$ and $|\calC_0| \geq 1$, yielding the second bound in Equation~\eqref{eq:term-bound}.

    We now consider the case $L \geq k$. 
    Note that for any $\beta$ nodes in $\Gnew$, with $(t-1)k \leq \beta < tk$ for $t \geq 2$, these $\beta$ nodes can form
    \begin{enumerate}
        \item one connected component $\Gnew_l$ with $l \in \calC_t$, 
        \item $t-2$ different connected components $\Gnew_l$ with $l \in \calC_1$ and one connected component $\Gnew_{l'}$ with $l' \in \calC_2$,
        \item $t-1$ different connected components $\Gnew_l$ with $l \in \calC_1$ and one connected component $\Gnew_{l'}$ with $l' \in \calC_0$.
    \end{enumerate}
    Since for any $t \geq 3$,
    \begin{equation*}
        \frac{N_x N_y^t}{n^t} \leq \left(\frac{N_x N_y}{n}\right)^{t-2} \frac{N_x N_y^2}{n^2} = \frac{N_x^{t-1} N_y^t}{n^t},
    \end{equation*}
    and 
    \begin{equation*}
        \frac{N_x^2 N_y^2}{n^3} \leq \frac{N_x N_y^2}{n^2},
    \end{equation*}
    the first and third choices contribute fewer terms than the second choice. 
    Thus, Equation~\eqref{eq:num-terms} is maximized when we take $|\calC_0|=0$ and $|\calC_t| = 0$ for all $t \geq 3$, and maximizing $|\calC_1|$.
    Hence, taking $|\calC_0| = 0$ and $|\calC_t| = 0$ for $t \geq 3$, Equation~\eqref{eq:num-terms} becomes
    \begin{equation*}
        n^{L} \cdot \left(\frac{N_x N_y}{n}\right)^{|\calC_1|} \cdot \left(\frac{N_x N_y^2}{n^2}\right)^{|\calC_2|}, 
    \end{equation*}
    which is maximized when $|\calC_1| = t$ and $|\calC_2| = 0$ if $L = tk$, and $|\calC_1| = t-2$, $|\calC_2| = 1$ if $(t-1)k+1 \leq L < tk$. 
    Thus, we obtain the last two bounds in Equation~\eqref{eq:term-bound}, completing the proof.
\end{proof}

\section{AUXILIARY RESULTS} \label{sec:aux}
\begin{lemma} \label{lem:lambda-asymm}
    Suppose $\mMstar$ is a rank-$r$ symmetric matrix whose top-$r$ eigenvalues obey $|\lambdastar_1| \ge \cdots \ge |\lambdastar_r| > 0$.
    Let $\lambda_1,\lambda_2,\dots,\lambda_r$ be the analogous $r$ leading eigenvalues of $\mM = \mMstar + \mH$, also sorted by modulus.
    If $\|\mH\| < |\lambdastar_r| / 2$, then for any $1 \leq l \leq r$, there exists $j \in [r]$ such that 
    \begin{equation*}
        |\lambda_l - \lambdastar_j| \leq \|\mH\| .
    \end{equation*}
    In addition, if $r=1$, then both the leading eigenvalue and the leading eigenvector of $\boldsymbol{M}$ are real-valued.
\end{lemma}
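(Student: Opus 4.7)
The plan is to combine the Bauer-Fike theorem with a continuity-of-roots argument to localize the top-$r$ eigenvalues of $\mM$ near the $r$ nonzero eigenvalues of $\mMstar$.

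For $t \in [0,1]$, I would set $\mM_t = \mMstar + t\mH$, so that $\mM_0 = \mMstar$ and $\mM_1 = \mM$. Since $\mMstar$ is symmetric, it admits an orthonormal eigenbasis with spectrum $\{\lambdastar_1,\ldots,\lambdastar_r,0,\ldots,0\}$ (the last $n-r$ eigenvalues being zero), so the Bauer-Fike theorem applies with condition number $1$: every eigenvalue $\lambda$ of $\mM_t$ satisfies $\min_{j}|\lambda - \mu_j| \le t\|\mH\|$, where $\{\mu_j\}$ ranges over the eigenvalues of $\mMstar$. Define the disks $D_0(t) = \{z\in\bbC : |z| \le t\|\mH\|\}$ and $D_j(t) = \{z\in\bbC : |z-\lambdastar_j|\le t\|\mH\|\}$ for $j\in[r]$. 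Under the hypothesis $\|\mH\| < |\lambdastar_r|/2$, any $z \in D_j(t)$ satisfies $|z| \ge |\lambdastar_r|-t\|\mH\| > t\|\mH\|$ for every $t \in [0,1]$, so $D_0(t)$ is disjoint from $D_1(t)\cup\cdots\cup D_r(t)$ throughout the entire path.

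Next, I would invoke continuity of eigenvalues: because the coefficients of the characteristic polynomial of $\mM_t$ are polynomials in $t$, its $n$ roots (with multiplicity) can be labeled by continuous functions $\lambda^{(1)}(t),\ldots,\lambda^{(n)}(t)$ on $[0,1]$ (see, e.g., Kato's analytic perturbation theory). At $t=0$, exactly $r$ of these functions equal the nonzero $\lambdastar_j$'s and the remaining $n-r$ equal $0$. By the Bauer-Fike bound at every $t$, each curve stays within $D_0(t)\cup\bigcup_{j\in[r]} D_j(t)$, and since the two components are disjoint for every $t$, no curve can migrate between them. Consequently at $t=1$, the $r$ curves originating from the nonzero $\lambdastar_j$'s lie in $\bigcup_j D_j(1)$ and have modulus at least $|\lambdastar_r|-\|\mH\|>\|\mH\|$, while the other $n-r$ curves remain in $D_0(1)$ with modulus at most $\|\mH\|$. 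Ordering by modulus identifies $\lambda_1,\ldots,\lambda_r$ with the endpoints of the first group, which gives $|\lambda_l - \lambdastar_{j_l}|\le \|\mH\|$ for some $j_l \in [r]$, as required.

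For the rank-one case, the same argument pins down exactly one eigenvalue of $\mM$ (counted with multiplicity) inside $D_1(1)$. Because $\mM$ is real and $\lambdastar_1\in\R$, the disk $D_1(1)$ is closed under complex conjugation; if the leading eigenvalue were non-real, its complex conjugate would be a distinct eigenvalue also lying in $D_1(1)$, contradicting the multiplicity count. Hence $\lambda_1\in\R$, its algebraic (and thus geometric) multiplicity equals one, and the kernel of the real matrix $\mM-\lambda_1\mI$ is a one-dimensional real subspace from which a real unit eigenvector can be selected. The main obstacle in executing this sketch is making the continuity-of-roots step rigorous, particularly at $t=0$ where the zero eigenvalue of $\mMstar$ has multiplicity $n-r$; the cleanest resolution is to appeal to standard results on continuity of polynomial roots or Kato's framework, after which the disjointness of the dilated disks $D_0(t)$ and $\bigcup_j D_j(t)$ for every $t \in [0,1]$ handles the rest by a simple connectedness argument.
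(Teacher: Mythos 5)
Your proof is correct. The paper simply defers to Lemma~2 of \cite{chen2021asymmetry}, and that reference establishes the claim by exactly the strategy you outline: interpolating $\mM_t = \mMstar + t\mH$, applying Bauer--Fike (with condition number $1$ since $\mMstar$ is symmetric), observing that the disk around the origin stays separated from the disks around the nonzero $\lambdastar_j$ throughout $t\in[0,1]$ under $\|\mH\|<|\lambdastar_r|/2$, and then using continuity of the eigenvalue branches plus conjugate symmetry for realness in the rank-one case. Your write-up fills in the separation and connectedness details cleanly and is a faithful self-contained version of the cited argument.
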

\begin{proof}
    See Lemma 2 in \cite{chen2021asymmetry}. 
\end{proof}

\begin{lemma} \label{lem:inner-product}
    Suppose $\mMstar$ is a rank-$r$ symmetric matrix with $r$ non-zero eigenvalues obeying $\lambdastar_{\max} = |\lambdastar_1| \geq \cdots \geq |\lambdastar_r| = |\lambdastar_{\min}| > 0$ and associated eigenvectors $\vustar_1, \cdots, \vustar_r$. 
    Define the condition number $\kappa = \lambdastar_{\max}/\lambdastar_{\min}$. If $\|\mH\| \leq \lambdastar_{\max}/(4\kappa)$, then the top $r$ eigenvectors $\vu_1, \vu_2, \dots, \vu_r$ of $\mM = \mMstar + \mH$ obey
    \begin{equation} \label{eq:inner-prod}
        \sum_{j=1}^r \left|\vustart_j \vu_l\right|^2 \geq 1 - \frac{64\kappa^4}{9\left(\lambdastar_{\max}\right)^2} \|\mH\|^2, \quad l \in [r].
    \end{equation}
    In addition, if $r=1$, then one further has
    \begin{equation*}
        \min\left\{ \|\vu - \vustar_1\|_2, \|\vu + \vustar_1\|_2 \right\} \leq \frac{8\sqrt{2}}{3|\lambdastar_1|} \|\mH\|. 
    \end{equation*}
\end{lemma}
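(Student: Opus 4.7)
The plan is to use the orthogonal projector $\mP = \sum_{j=1}^{r} \vustar_j \vustart_j$ onto the range of $\mMstar$ and exploit the identity $(\mI - \mP)\mMstar = 0$ to write $(\mI - \mP)\vu_l$ purely in terms of $\mH$.

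First I would verify the key identity. Since $\mM \vu_l = \lambda_l \vu_l$, writing $\mM = \mMstar + \mH$ and applying $(\mI - \mP)$ on the left yields
\[
(\mI - \mP)\mH \vu_l \;=\; \lambda_l \, (\mI - \mP)\vu_l,
\]
because $(\mI - \mP)\mMstar = 0$. Assuming $\lambda_l \neq 0$, this gives $(\mI - \mP)\vu_l = \lambda_l^{-1}(\mI - \mP)\mH \vu_l$, so taking Euclidean norms and using $\|\mI - \mP\|_{\opnorm} = 1$ and $\|\vu_l\|_2 = 1$ yields $\|(\mI - \mP)\vu_l\|_2 \le \|\mH\|/|\lambda_l|$. (One caveat: in the general asymmetric setting $\vu_l$ and $\lambda_l$ may a priori be complex, but all manipulations make sense over $\mathbb{C}^n$ since $\mP$ is real symmetric, and the identity $\|\vu_l\|_2^2 = \|\mP\vu_l\|_2^2 + \|(\mI-\mP)\vu_l\|_2^2 = 1$ still holds.)

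Next I would lower bound $|\lambda_l|$. Under the hypothesis $\|\mH\| \le \lambdastar_{\max}/(4\kappa) < \lambdastar_{\min}/2$, Lemma~\ref{lem:lambda-asymm} applies and gives $|\lambda_l - \lambdastar_j| \le \|\mH\|$ for some $j \in [r]$, hence
\[
|\lambda_l| \;\ge\; \lambdastar_{\min} - \|\mH\| \;\ge\; \frac{\lambdastar_{\max}}{\kappa} - \frac{\lambdastar_{\max}}{4\kappa} \;=\; \frac{3\lambdastar_{\max}}{4\kappa}.
\]
Combining with the previous bound,
\[
\|(\mI - \mP)\vu_l\|_2^2 \;\le\; \frac{\|\mH\|^2}{|\lambda_l|^2} \;\le\; \frac{16\kappa^2}{9(\lambdastar_{\max})^2}\|\mH\|^2 \;\le\; \frac{64\kappa^4}{9(\lambdastar_{\max})^2}\|\mH\|^2,
\]
the last step using $\kappa \ge 1$. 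Finally, expanding
\[
\sum_{j=1}^{r} |\vustart_j \vu_l|^2 \;=\; \vu_l^{*}\mP \vu_l \;=\; \|\mP\vu_l\|_2^2 \;=\; 1 - \|(\mI - \mP)\vu_l\|_2^2
\]
gives Equation~\eqref{eq:inner-prod}.

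For the rank-one addendum, Lemma~\ref{lem:lambda-asymm} guarantees $\vu$ and $\lambda$ are real, and the bound above specializes (with $\kappa = 1$) to $|\vustart_1 \vu|^2 \ge 1 - \frac{16}{9|\lambdastar_1|^2}\|\mH\|^2$. Setting $s = \sgn{\vustart_1 \vu} \in \{\pm 1\}$, I would compute
\[
\|\vu - s\vustar_1\|_2^2 \;=\; 2 - 2|\vustart_1 \vu| \;\le\; \frac{2(1 - |\vustart_1\vu|^2)}{1 + |\vustart_1\vu|} \;\le\; 2(1 - |\vustart_1 \vu|^2) \;\le\; \frac{32 \|\mH\|^2}{9 |\lambdastar_1|^2},
\]
so $\min\{\|\vu - \vustar_1\|_2, \|\vu + \vustar_1\|_2\} \le \frac{4\sqrt{2}}{3|\lambdastar_1|}\|\mH\| \le \frac{8\sqrt{2}}{3|\lambdastar_1|}\|\mH\|$, finishing the proof.

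The whole argument is short; the only mildly delicate point is justifying the decomposition $1 = \|\mP\vu_l\|_2^2 + \|(\mI - \mP)\vu_l\|_2^2$ when $\vu_l$ may be complex, which follows immediately from $\mP$ being a real symmetric (hence Hermitian) projector. No heavy machinery beyond Lemma~\ref{lem:lambda-asymm} is needed.
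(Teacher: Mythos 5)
Your proof is correct, and indeed the paper itself never proves this lemma---it simply cites Lemma~3 of \cite{chen2021asymmetry}. Your argument via the orthogonal projector $\mP = \sum_{j=1}^r \vustar_j \vustart_j$ and the identity $(\mI - \mP)\vu_l = \lambda_l^{-1}(\mI - \mP)\mH\vu_l$ is a clean self-contained route, and every step checks out: the use of Lemma~\ref{lem:lambda-asymm} is justified because $\lambdastar_{\max}/(4\kappa) = \lambdastar_{\min}/4 < \lambdastar_{\min}/2$; the lower bound $|\lambda_l| \ge 3\lambdastar_{\max}/(4\kappa)$ follows; and the Hermitian-projector decomposition $1 = \|\mP\vu_l\|_2^2 + \|(\mI-\mP)\vu_l\|_2^2$ is valid over $\mathbb{C}^n$. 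Notably, your argument actually delivers the \emph{sharper} constants $16\kappa^2/\bigl(9(\lambdastar_{\max})^2\bigr)$ in Equation~\eqref{eq:inner-prod} and $4\sqrt{2}/(3|\lambdastar_1|)$ in the rank-one bound, which you then weaken (by factors of $4\kappa^2 \ge 4$ and $2$, respectively) to match the constants stated in the lemma as imported from the reference. So not only is your proof complete where the paper simply defers to a citation, it also shows the stated constants are not tight; the extra slack presumably comes from the proof path taken in the cited work.
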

\begin{proof}
    See Lemma 3 in \cite{chen2021asymmetry}. 
\end{proof}

\begin{lemma}[Bernstein's Inequality]\label{lem:bernstein}
    Let $X_1, \ldots, X_n$ be independent, centered, real random variables, and assume that each one is uniformly bounded:
    \begin{equation*}
        \E X_k = 0, \quad \text{and} \quad |X_k| \leq L \quad \text{for each } k \in [n]. 
    \end{equation*}
    Introduce the sum $Z=\sum_{k=1}^n X_k$, and let $\nu(Z)$ denote the variance of the sum:
    \begin{equation*}
        \nu(Z)=\E Z^2=\sum_{k=1}^n \E X_k^2
    \end{equation*}
    Then for all $t \geq 0$,
    \begin{equation*}
        \Pr\{|Z| \geq t\} \leq 2 \exp \left(\frac{-t^2 / 2}{\nu(Z)+L t / 3}\right) \leq 2\exp\left\{-\min\left( \frac{t^2}{4\nu(Z)}, \frac{3t}{4L} \right)\right\} 
    \end{equation*}
\end{lemma}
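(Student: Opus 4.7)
\medskip

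\noindent\textbf{Proof plan.} The inequality is the classical Bernstein bound, so I would follow the standard Chernoff/MGF route. The plan is to (i) bound the moment generating function of each $X_k$ using the boundedness $|X_k|\le L$, (ii) multiply them together via independence to bound $\E e^{\lambda Z}$, (iii) apply Markov's inequality and optimize the free parameter $\lambda>0$, and (iv) convert the resulting compact expression to the two stated forms.

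\medskip

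\noindent\textbf{Step 1 (MGF bound on each summand).} For any $\lambda>0$, expand $e^{\lambda X_k}=1+\lambda X_k+\sum_{j\ge 2}\lambda^j X_k^j/j!$. Using $\E X_k=0$ and the Bernstein moment condition $\E|X_k|^j\le L^{j-2}\E X_k^2$ for $j\ge 2$ (a direct consequence of $|X_k|\le L$, exactly as in Equation~\eqref{eq:high-moment}), one obtains
\begin{equation*}
\E e^{\lambda X_k}\le 1+\frac{\E X_k^2}{L^2}\bigl(e^{\lambda L}-1-\lambda L\bigr)\le \exp\!\left(\frac{\E X_k^2}{L^2}(e^{\lambda L}-1-\lambda L)\right),
\end{equation*}
where the last step uses $1+x\le e^x$.

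\medskip

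\noindent\textbf{Step 2 (Tensorization and Chernoff).} By independence, $\E e^{\lambda Z}=\prod_k \E e^{\lambda X_k}\le \exp\!\bigl(\nu(Z)(e^{\lambda L}-1-\lambda L)/L^2\bigr)$. Markov's inequality yields
\begin{equation*}
\Pr\{Z\ge t\}\le \exp\!\left(-\lambda t+\frac{\nu(Z)}{L^2}(e^{\lambda L}-1-\lambda L)\right).
\end{equation*}
Optimizing over $\lambda$ (set the derivative to zero, giving $\lambda L=\log(1+tL/\nu(Z))$) and writing $h(u)=(1+u)\log(1+u)-u$ produces the Cram\'er-form bound $\Pr\{Z\ge t\}\le \exp(-\nu(Z) h(tL/\nu(Z))/L^2)$.

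\medskip

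\noindent\textbf{Step 3 (Polynomial lower bound on $h$ and two-sided extension).} Use the elementary inequality $h(u)\ge u^2/(2+2u/3)$ for $u\ge 0$, which gives $\Pr\{Z\ge t\}\le \exp\!\bigl(-t^2/(2\nu(Z)+2Lt/3)\bigr)$. Applying the same argument to $-Z$ (the hypotheses are symmetric in sign) and taking a union bound doubles the probability, yielding the first stated inequality. Finally, the elementary bound $a+b\le 2\max\{a,b\}$ gives
\begin{equation*}
\frac{t^2/2}{\nu(Z)+Lt/3}\ge \min\!\left(\frac{t^2}{4\nu(Z)},\frac{3t}{4L}\right),
\end{equation*}
which produces the second, min-form inequality.

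\medskip

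\noindent\textbf{Anticipated difficulty.} There is essentially no hard step; the proof is a textbook exercise. The only points requiring a moment of care are verifying the elementary inequality $h(u)\ge u^2/(2+2u/3)$ (standard, via comparison of Taylor coefficients or by examining $h(u)(2+2u/3)-u^2$) and being careful that the two-sided bound requires applying the one-sided argument to $-X_k$, which is legitimate because $\E(-X_k)=0$ and $|-X_k|\le L$. For a paper of this kind, it would suffice simply to cite \cite{vershynin2018HDP} or \cite{wainwright2019high} as the authors do in the proof of Lemma~\ref{lem:xHy-bound}.
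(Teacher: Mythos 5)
Your proposal is correct. The paper's own proof simply cites Theorem~1.6.1 of Tropp for the first (classical Bernstein) inequality and then derives the min-form from the elementary observation that $-\tfrac{c}{a+b}\le -\min\!\left(\tfrac{c}{2a},\tfrac{c}{2b}\right)$ for $a,b,c>0$, which is exactly the same algebraic step as your final line $\tfrac{t^2/2}{\nu(Z)+Lt/3}\ge \min\!\left(\tfrac{t^2}{4\nu(Z)},\tfrac{3t}{4L}\right)$. The only difference is that you supply the full Chernoff--MGF derivation (via the Bennett function $h$) for a result the paper treats as a black box, which is fine but unnecessary here; your closing remark that a citation would suffice matches what the authors actually do.
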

\begin{proof}
    For the first inequality, see Theorem 1.6.1 in \cite{tropp2015concentration}. The second inequality follows from the observation that for any $a,b,c > 0$, 
    \begin{equation*}
        -\frac{c}{a+b} \leq -\min\left(\frac{c}{2a}, \frac{c}{2b}\right). 
    \end{equation*}
\end{proof}

\end{document}